\newcommand{\kommentar}[1]{}
\newcommand{\acom}[1]{{\color{blue}{Alexandra: #1}} }
\newcommand{\hcom}[1]{{\color{red}{Hung: #1}} }
\newcommand{\powerzeta}{ |\zeta(\tfrac12+\alpha+it)|^{-2k}}
\newtheorem{lem}{Lemma}[section]
\newtheorem{prop}[lem]{Proposition}
\newtheorem{thm}[lem]{Theorem}
\newtheorem{conj}[lem]{Conjecture}
\theoremstyle{definition}
\newtheorem{rem}{Remark}
\numberwithin{equation}{section}
\begin{document}

\title{Negative moments of the Riemann zeta-function}

\author{Hung M. Bui and Alexandra Florea}
\address{Department of Mathematics, University of Manchester, Manchester M13 9PL, UK}
\email{hung.bui@manchester.ac.uk}
\address{UC Irvine, Mathematics Department, Rowland Hall, Irvine 92697, USA}
\email{floreaa@uci.edu}

\begin{abstract}
Assuming the Riemann Hypothesis we study negative moments of the Riemann zeta-function and obtain asymptotic formulas in certain ranges of the shift in $\zeta(s)$. For example, integrating $|\zeta(1/2+\alpha+it)|^{-2k}$ with respect to $t$ from $T$ to $2T$, we obtain an asymptotic formula when the shift $\alpha$ is roughly bigger than $\frac{1}{\log T}$ and $k < 1/2$. We also obtain non-trivial upper bounds for much smaller shifts, as long as $\log\frac{1}{\alpha} \ll \log \log T$. This provides partial progress towards a conjecture of Gonek on negative moments of the Riemann zeta-function, and settles the conjecture in certain ranges. As an application, we also obtain an upper bound for the average of the generalized M\"{o}bius function.

\end{abstract}

\subjclass[2010]{11M06, 11M50}
\keywords{Riemann zeta-function, moments, negative moments, M\"{o}bius function.}

\allowdisplaybreaks

\maketitle

\section{Introduction}
For $k>0$, the $2k^{\text{th}}$ moment of the Riemann zeta-function is given by
$$ I_k(T) = \int_0^T  |\zeta(\tfrac{1}{2}+it )|^{2k} \, dt.$$
Hardy and Littlewood \cite{HL} computed the second moment, and Ingham \cite{I} computed the fourth moment. It is conjectured that 
\begin{equation}
I_k(T) \sim c_k T (\log T)^{k^2}
\label{asympt}
\end{equation}
for an explicit constant $c_k$, whose value was predicted by Keating and Snaith \cite{KS2} using analogies with random matrix theory. Their conjecture was later refined by Conrey, Farmer, Keating, Rubinstein and Snaith \cite{CFKRS} to include lower order powers of $\log T$, for integer $k$. 
Under the Riemann Hypothesis (RH), Soundararajan \cite{sound_ub} established almost sharp upper bounds for all the positive moments. This result was later improved by Harper \cite{harper}, who obtained upper bounds of the conjectural magnitude as in \eqref{asympt}. There is a wealth of literature on obtaining lower and upper bounds for positive moments of $\zeta(s)$; for (an incomplete) list of results, we refer the reader to \cite{ramachandra1, ramachandra2, hb, maks, rs, BCR, HRS, heap_sound}. 

In this paper, we are interested in studying \textit{negative} moments of the Riemann zeta-function. For $k>0$ and $\alpha>0$, let
$$I_{-k}(\alpha, T) = \frac{1}{T} \int_0^T \powerzeta \, dt.$$
A conjecture due to Gonek \cite{gonek} states the following. 
\begin{conj}[Gonek] \label{gonek_conj}
Let $k>0$. Uniformly for $\frac{1}{\log T} \leq \alpha \leq 1$,
$$ I_{-k}(\alpha,T) \asymp  \Big(\frac{1}{\alpha} \Big)^{k^2}, $$
and uniformly for $0 < \alpha \leq \frac{1}{\log T}$,
$$
I_{-k}(\alpha, T) \asymp 
\begin{cases}
(\log T)^{k^2} & \mbox{ if } k<1/2, \\
\big(\log\frac{e}{\alpha \log T}\big) (\log T)^{k^2} & \mbox{ if } k=1/2. \\
\end{cases}
$$
\end{conj}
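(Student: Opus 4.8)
\textit{Proof strategy.} We work throughout under RH. The backbone of the argument is a Dirichlet polynomial approximation of $\zeta(\tfrac12+\alpha+it)^{-k}$ followed by a mean-value computation, which degenerates into a large-deviation argument when one only seeks upper bounds at the smallest shifts. Formally $\zeta(s)^{-k}=\sum_{n\ge 1}\mu_k(n)\,n^{-s}$, where $\mu_k$ is multiplicative with $\mu_k(p)=-k$ and $\mu_k(p^j)=\binom{-k}{j}$, so that $\mu_k(p)^2=k^2$. If this series could be replaced by its truncation $D_X(s):=\sum_{n\le X}\mu_k(n)n^{-s}$ of length $X=T^{\theta}$ at negligible cost, then
\begin{equation*}
I_{-k}(\alpha,T)\;\approx\;\frac1T\int_0^T\bigl|D_X(\tfrac12+\alpha+it)\bigr|^2\,dt,
\end{equation*}
whose diagonal contribution is $\sum_{n\le X}\mu_k(n)^2 n^{-1-2\alpha}\asymp\prod_p\bigl(1+k^2 p^{-1-2\alpha}+\cdots\bigr)\asymp\zeta(1+2\alpha)^{k^2}$. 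Since $\zeta(1+2\alpha)\asymp\alpha^{-1}$ in the range $\alpha\ge 1/\log T$ and $\asymp\log T$ when $\alpha\le 1/\log T$, this already reproduces the conjectured order of magnitude, and with more care the leading constant predicted by the recipe of Conrey, Farmer, Keating, Rubinstein and Snaith \cite{CFKRS}. The entire task is therefore to justify the approximation and to show that the off-diagonal terms are of smaller order.

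The off-diagonal part of the mean value is governed by standard estimates for $\frac1T\int_0^T(n/m)^{it}\,dt$ and is $o(\alpha^{-k^2})$ as long as $X\le T^{1-\delta}$. For the approximation itself I would use the approximate functional equation, or a smoothed contour shift, to write $\zeta(\tfrac12+\alpha+it)^{-k}=\sum_n\mu_k(n)\,n^{-1/2-\alpha-it}V(n/Y)+(\text{dual term})+\mathcal{E}$, where under RH the error $\mathcal{E}$ is controlled by the growth of $\zeta(s)^{-k}$ in a half-plane $\re(s)\ge\tfrac12+\alpha'$. Here lies the tension that pins down the admissible range: $\mathcal{E}$ is small only when $Y$ is a sufficiently large power of $T$, whereas the off-diagonal bound wants $Y$ small, and balancing the two forces $\alpha$ to be at least of size roughly $1/\log T$. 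The restriction $k<\tfrac12$ has a separate, structural source: locally $\zeta(\tfrac12+\alpha+it)$ behaves like $\prod_\gamma\bigl(\alpha+i(t-\gamma)\bigr)$ times a mild factor, so the set of $t$ lying within distance $\delta$ of its nearest zero contributes of order $\delta^{-2k}$ to the integrand, and $\int_0\delta^{-2k}\,d\delta$ converges precisely when $k<\tfrac12$; at the endpoint $k=\tfrac12$ the integral diverges logarithmically, producing the factor $\log\frac{e}{\alpha\log T}$.

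When $\alpha$ is as small as $\exp(-c\log\log T)$ the asymptotic analysis above breaks down, but the one-sided bound $I_{-k}(\alpha,T)\ll(\log T)^{k^2}$ for $k<\tfrac12$ can still be reached by the large-deviation method of Soundararajan \cite{sound_ub} and Harper \cite{harper}: partition the $t$-interval according to $\min_\gamma|t-\gamma|$; on the bulk, bound $-\log|\zeta(\tfrac12+\alpha+it)|$ from above by a short Dirichlet polynomial over primes via the explicit formula, and run the usual moment estimate for the resulting random Euler product; on the exceptional set where $t$ is abnormally close to a zero, use the local factorization together with the density of zeros to show the contribution is negligible once $\log\frac1\alpha\ll\log\log T$. \emph{The main obstacle} throughout is the accurate control of the tail of the Dirichlet series for $\zeta^{-k}$ when $\alpha$ is small — equivalently, quantifying how often and how severely $\zeta(\tfrac12+\alpha+it)$ comes close to its zeros — and it is exactly this that separates the range where the full asymptotic can be proved from the range where only upper bounds are currently accessible.
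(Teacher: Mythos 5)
The statement you were asked to prove is \emph{Conjecture} \ref{gonek_conj}, not a theorem: it is an open conjecture due to Gonek, and the paper does not prove it. What the paper proves is partial progress. Under RH, the upper bounds of Theorems \ref{mainthm1} and \ref{mainthm2} exceed the conjectured order of magnitude by $\log\log$ factors (or are much weaker at small $\alpha$), and the asymptotic formula of Theorem \ref{thm_asymptotic} holds only in the restricted range $\alpha\gg\max\{(\log\log T)^{4/k+\varepsilon}(\log T)^{-1/(2k)},\,(\log\log T)/(2\log T)\}$, far inside the conjectured range $\alpha\geq 1/\log T$. Lower bounds of the right order were proved by Gonek himself. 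So there is no proof of Conjecture \ref{gonek_conj} in the paper to compare your attempt against, and your write-up cannot be read as a proof of it either.

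Your sketch is a heuristic that recovers the expected answer and names — but does not overcome — the genuine obstructions. Two concrete gaps. First, in the contour-shift/mean-value step you \emph{assume} that the tail error $\mathcal{E}$ is negligible and that the off-diagonal terms are $o(\alpha^{-k^2})$ for $X\leq T^{1-\delta}$; both statements require a priori control of negative moments of $\zeta$, so the argument is circular. The paper's Section \ref{section_formula} does run exactly this contour-shift computation, but it closes the loop by invoking its own upper bounds \eqref{1} and \eqref{4}, which is precisely why the resulting asymptotic only holds in the reduced $\alpha$-range of Theorem \ref{thm_asymptotic} rather than down to $1/\log T$. Second, the argument that the $k<\tfrac12$ threshold arises from $\int_0\delta^{-2k}\,d\delta$ presupposes a strong quantitative statement about the distribution of $t$ near zeros of $\zeta$ (essentially, that the measure of $t$ within distance $\delta$ of a zero scales linearly in $\delta$ uniformly, with no exceptional clustering); that is not known even under RH and is closely tied to the Gonek--Hejhal discrete moment conjecture, which the paper cites but does not prove. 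Your third paragraph does match the paper's strategy for the upper-bound theorems (the Soundararajan--Harper sieve applied on sets defined by the sizes of short Dirichlet polynomials, with the explicit-formula lower bound of Lemma \ref{key_ineq}), so that part is aligned in spirit — but again it yields upper bounds off by $\log\log$ factors, not the conjectured asymptotic. Finally, a small slip: for $\zeta(s)^{-k}=\prod_p(1-p^{-s})^{k}$ one has $\mu_k(p^j)=(-1)^j\binom{k}{j}$; $\binom{-k}{j}$ gives the coefficients of $\zeta(s)^{k}$.
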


Gonek's original conjecture predicted formulas for $k>1/2$ and $\alpha \leq \frac{1}{\log T}$ as well, which seem to be contradicted however by more recent evidence (see the next paragraph). Under RH, Gonek \cite{gonek} also proved lower bounds of the conjectured order of magnitude for all $k>0$ and $\frac{1}{\log T} \leq \alpha \leq 1$, and for $k<1/2$ and $0<\alpha \leq\frac{1}{\log T}.$ 

No other progress has been made towards Gonek's conjecture so far, but more recent random matrix theory computations due to Berry and Keating \cite{bk} and Forrester and Keating \cite{FK} suggest that certain corrections to the above conjecture are due in some ranges. Namely, when $\alpha \leq \frac{1}{\log T}$, random matrix theory computations seem to contradict Gonek's prediction for the negative moments when $k \geq 3/2$. In particular, the work in \cite{bk, FK} suggests certain transition regimes when $k=(2n+1)/2$, for $n$ a positive integer (Gonek's conjecture already captures the first transition at $k=1/2$ featuring a logarithmic correction, and in this case the two conjectures do agree.)

Reinterpreting the random matrix theory computations in \cite{bk}, one would expect that for a shift $0<\alpha \leq \frac{1}{\log T}$ and $j$ a natural number such that $(2j-1)/2 < k < (2j+1)/2$,
\begin{equation}
 I_{-k}(\alpha, T) \asymp (\log T)^{k^2} (\alpha \log T)^{-j(2k-j)},
 \label{rmt}
 \end{equation} while for $k=(2j-1)/2$ and $j$ a natural number, one would expect
$$  I_{-k}(\alpha, T) \asymp \log \Big( \frac{e}{\alpha \log T} \Big) (\log T)^{k^2} (\alpha \log T)^{-j(2k-j)}.$$
We note that the above prediction indeed agrees with Conjecture \ref{gonek_conj} for $k=1/2$ and $\alpha\leq\frac{1}{\log T}$. We remark that one could also predict \eqref{rmt} for integer $k$ using heuristic ideas similar as in \cite{CFKRS}. 

In this paper, we study the negative moments of the Riemann zeta-function. While obtaining lower bounds for the negative moments is a more tractable problem (see the comment after Conjecture \ref{gonek_conj}), no progress has been made so far on obtaining asymptotic formulas or non-trivial upper bounds.
When the shift $\alpha$ is ``big enough'', we obtain upper bounds which are almost sharp according to Conjecture \ref{gonek_conj}, up to some logarithmic factors. We also obtain the first non-trivial upper bounds for the negative moments for a wide range of much smaller shifts $\alpha$ (roughly $\alpha \gg (\log T)^{-O(1)}$); however, the bounds in these cases are far from sharp.

More precisely, we prove the following.
\begin{thm}
\label{mainthm1}
Assume RH. Let $k \geq 1/2, \alpha>0$ and $\varepsilon, \delta>0$, such that $u=\frac{\log\frac{1}{\alpha}}{ \log \log T}\ll 1$. Then
\begin{equation*}
\frac{1}{T} \int_T^{2T} \powerzeta \, dt  \ll   
\end{equation*}
\begin{numcases}{}
(\log \log T)^k(\log T)^{k^2} & if $ \alpha \gg \frac{(\log \log T)^{\frac{4}{k}+\varepsilon}}{( \log T)^{\frac{1}{2k}}}, $ \label{1}\\
\exp \Big( \frac{ (4+\varepsilon) \log T \log \log \log T}{\log \log T} \Big) & if $\frac{1}{(\log T)^{\frac{1}{2k}}} \ll \alpha = o \big( \frac{ (\log \log T)^{\frac{4}{k}+\varepsilon}}{(\log T)^{\frac{1}{2k}}} \big), $ \label{2} \\ 
T^{(1+\delta)(ku-\frac12+k\varepsilon)}  &  if $ \alpha \ll \frac{1}{(\log T)^{\frac{1}{2k}}}$. \label{3}
\end{numcases}
\end{thm}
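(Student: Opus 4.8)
The strategy is to reduce everything to pointwise upper bounds for $|\zeta(1/2+\alpha+it)|^{-1}$, or rather, to an understanding of how often $\log|\zeta(1/2+\alpha+it)|$ can be very negative. Under RH, one has the classical bound (due to Soundararajan, refined by Chandee and others) expressing $\log|\zeta(1/2+\alpha+it)|$ in terms of a short Dirichlet polynomial over primes plus a contribution controlled by the size of $\alpha$; concretely, for suitable $x$,
$$
\log|\zeta(\tfrac12+\alpha+it)| \le \re \sum_{n \le x} \frac{\Lambda(n)}{n^{1/2+\alpha+it}\log n}\cdot\frac{\log(x/n)}{\log x} + (1+o(1))\frac{\log T}{\log x}\cdot g(\alpha\log x) + \text{(error)},
$$
where $g$ captures the $\alpha$-dependence. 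The plan is first to establish such an inequality carefully with explicit dependence on $\alpha$, choosing the parameter $x$ optimally in each of the three regimes; then to exponentiate, obtaining $|\zeta(1/2+\alpha+it)|^{-2k} \ll \exp\big(\text{const}\cdot \tfrac{\log T}{\log x}\big)\cdot\exp\big(-2k\,\re P(t)\big)$ where $P$ is the prime sum. The deterministic prefactor $\exp(C\log T/\log x)$ is exactly what produces the bounds \eqref{2} and \eqref{3}: when $\alpha$ is tiny one cannot take $\log x$ larger than about $\log\frac1\alpha$ before the $g$-term blows up, which forces $\log x \asymp \log\frac1\alpha = u\log\log T$ in regime \eqref{3} and $\log x \asymp \log\log T$ in regime \eqref{2}, and a short computation then yields the stated $T$-power and the $\exp(4\log T\log\log\log T/\log\log T)$ bound respectively. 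In the large-shift regime \eqref{1}, one can afford $\log x \asymp k\log\log T \cdot \log T/\log\frac1\alpha$ or so — large enough that the deterministic prefactor is only $(\log T)^{O(1)}$.

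The heart of the matter is then to bound the mean value over $t\in[T,2T]$ of $\exp\big(-2k\,\re P(t)\big)$ where $P(t) = \sum_{n\le x}\frac{\Lambda(n)}{n^{1/2+\alpha+it}\log n}\frac{\log(x/n)}{\log x}$. The plan is the standard one: split into $P = P_1 + P_2$ where $P_1$ is the sum over primes up to some $y \le x$ (with $y$ a small power of $T$ so that $P_1$ has a large moment via the classical mean-value theorem for Dirichlet polynomials), and $P_2$ is the tail. For $P_1$ we use $e^{-2k\re P_1} \le$ a truncated Taylor polynomial of controlled degree (since $\re P_1$ is not too negative on a large-measure set, using that $\sum_{p\le y} 1/p \approx \log\log y$), and then integrate term by term using that $\int_T^{2T} n^{-it} m^{it}\,dt \ll T\delta_{m=n} + $ small. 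This produces roughly $\exp\big(2k^2\sum_{p\le y}\frac{1}{p^{1+2\alpha}}\big) \asymp (\min(\log T, 1/\alpha))^{k^2}$, which after combining with the prefactor gives the main terms $(\log T)^{k^2}$ or $(1/\alpha)^{k^2}$; the $(\log\log T)^k$ loss in \eqref{1} comes from handling $P_2$ crudely (Cauchy–Schwarz, or a cruder exponential-moment bound) and from slack in the Taylor truncation. For the very small shifts one does not even need to be careful about $P$: since the prefactor already dominates, one can afford to bound the exponential moment of $P$ by something like $\exp(O(\log T/\log x))$ as well.

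The main obstacle, and the step requiring the most care, is controlling the range of validity of the pointwise RH bound uniformly in $\alpha$ — in particular pinning down the function $g(\alpha\log x)$ and verifying that for $\alpha \gg 1/(\log T)^{1/2k}$ the term $\frac{\log T}{\log x}g(\alpha\log x)$ really is of size $O(\log\log T \cdot \log T/\log x) \cdot$ (something small), so that choosing $\log x$ as large as the constraint permits yields \eqref{1} with only the $(\log\log T)^k$ loss. A secondary technical point is the interplay between the length $x$ of the prime sum in the RH bound and the length $y\le x$ up to which one can take a large power moment: one needs $y$ a small enough power of $T$ that $P_1^{2R}$ has mean $\ll T$ for $R$ up to about $\log T/(y\text{-length})$, while $x$ may be much longer, so the tail $P_2$ over $(y,x]$ must be shown to contribute only a bounded factor on a set of nearly full measure — this is where one invokes that $\sum_{y < p \le x} 1/p^{1+2\alpha}$ is $O(1)$ once $y$ is a fixed small power of $T$ (and, for small $\alpha$, even $\ll \log(1/(\alpha\log y))$, matching the conjectured logarithmic corrections but here just absorbed into constants). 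Everything else — the reduction from $\int_T^{2T}$ to the exponential moment, the term-by-term integration, optimizing $x$ — is routine bookkeeping once these two points are set up.
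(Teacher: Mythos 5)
Your plan has the right high-level shape — reduce to a pointwise RH inequality for $\log|\zeta(\tfrac12+\alpha+it)|$ plus an exponential moment of a prime Dirichlet polynomial — and you correctly identify that the deterministic prefactor coming from the $\alpha$-dependence is what drives the transitions between \eqref{1}, \eqref{2} and \eqref{3}. But there are two substantive gaps that mean the plan as described would not give \eqref{1} in the range stated.

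First, for \emph{negative} moments you need a \emph{lower} bound $\log|\zeta(\tfrac12+\alpha+it)|\geq \cdots$ (so that exponentiating gives an upper bound on $|\zeta|^{-2k}$). You write the inequality with $\leq$, which is Soundararajan's upper bound and runs the wrong way. This is plausibly a typo, but the lower-bound analogue is genuinely harder: it is the Carneiro--Chandee inequality (Lemma~2.1 here), obtained from an extremal \emph{majorant} of $\log\frac{4+x^2}{\alpha^2+x^2}$, and its prime-sum coefficients $a_\alpha(p;\Delta)$ and the negative constant term $\frac{\log t}{2\pi\Delta}\log(1-e^{-2\pi\Delta\alpha})$ both depend delicately on $\alpha$ and on the polynomial length $e^{2\pi\Delta}$. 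That negative constant term is the contribution of the zeros; it is not captured by a generic $g(\alpha\log x)$ that one tunes after the fact, and tracking it through the coefficient bounds \eqref{improved_bd}, \eqref{bn} is where the real work lies. Your "secondary technical point" is in fact the main obstruction.

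Second, and more seriously, the proposed two-part split $P=P_1+P_2$ (Soundararajan-style) together with a single pass cannot give \eqref{1} down to $\alpha \gg (\log\log T)^{4/k+\varepsilon}/(\log T)^{1/2k}$. The paper uses the Harper multi-scale decomposition into $K\asymp\log\log T$ prime intervals $I_0,\dots,I_K$, with events $\mathcal{T}_u$ and a hierarchical majorisation (Lemma~3.1), precisely to avoid the $(\log\log T)^{O(1)}$ losses of the two-piece method. Even with that, a single pass only gives \eqref{1} for $\alpha\gg (\log T)^{-1/(2k)+\varepsilon}$: the pointwise bound \eqref{CCbound} used to control the bad set $[T,2T]\setminus\mathcal T_0$ is too lossy to push $\alpha$ down to the threshold in \eqref{1}. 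To close the gap one needs the recursive bootstrap of Section~6: apply H\"older to $\int_{[T,2T]\setminus\mathcal T_0}|\zeta|^{-2k}$ using the previously established bound on $\int|\zeta|^{-2kp}$ (for $p>1$ close to $1$), shrink $\beta_0$ at each step, and iterate $O(\log\log T/\log\log\log T)$ times; the parameters $a,d,r$ and the truncation $\beta_K$ must be retuned at each stage to keep $\sum_h\ell_h\beta_h\leq 1$. Nothing in your plan performs this iteration, so the plan would at best recover \eqref{1} in a strictly smaller range, and would not recover the stated threshold. (The choice of $\log x$ you write for regime \eqref{1}, namely $\log x\asymp k\log\log T\cdot\log T/\log\tfrac1\alpha$, is also dimensionally off — the polynomial length $T^{\beta_K}$ in the paper is a small fixed power of $T$.)
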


We also have the following bounds for $k<1/2$. 

\begin{thm}
\label{mainthm2}
Assume RH. Let $k < 1/2, \alpha>0$ and $\varepsilon, \delta>0$, such that $u=\frac{\log\frac{1}{\alpha}}{ \log \log T}\ll 1$. Then
\begin{equation*}
\frac{1}{T} \int_T^{2T} \powerzeta \, dt  \ll   
\end{equation*}
\begin{numcases}{}
(\log \log T)^k\Big( \frac{\log(\alpha \log T)}{\alpha} \Big)^{k^2} & if $\alpha\gg \frac{ \log \log T}{\log T}, $ \label{4} \\
 \exp \Big(C_1 (\log \log T) \Big( \log \frac{\log \log T}{\alpha \log T } \Big)^2 \Big) & if $ \frac{1}{\log T} \ll \alpha = o \big( \frac{ \log \log T}{\log T} \big), $ \label{5'} \\
\exp \Big( C_2 \Big( \frac{1}{\alpha \log T} \Big)^{\frac{2k}{1-2k-k\varepsilon}} \Big) & if $ \frac{1}{(\log T)^{\frac{1}{2k}-\varepsilon}} \ll \alpha = o \big( \frac{1}{\log T} \big), $\label{6}\\
T^{(1+\delta)(ku-\frac12+k\varepsilon)} \exp \Big( O \Big(\frac{\log T \log \log  \log T}{\log \log T} \Big) \Big) &  if $ \alpha=o\big(\frac{1}{(\log T)^{\frac{1}{2k}-\varepsilon}}\big)$, \label{7}
\end{numcases}
for some constants $C_1, C_2>0$. 
\end{thm}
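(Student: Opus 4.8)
The plan is to adapt Soundararajan's method \cite{sound_ub} for moments of $\zeta$ to negative exponents. The new phenomenon, absent from the positive‑moment case, is that $|\zeta(\tfrac12+\alpha+it)|^{-2k}$ is large exactly where $|\zeta|$ is small, i.e.\ where $t$ is close to a zero of $\zeta$, so the contribution of these small values must be isolated and estimated by hand; this — together with the length one is forced to take for the Dirichlet polynomial below — is what produces the non‑sharp exponential factors in \eqref{5'}--\eqref{7} and the four ranges of $\alpha$.

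\emph{Step 1: a pointwise upper bound for $\log(1/|\zeta|)$.} Under RH, $\log\zeta(s)$ is holomorphic in $\re s>\tfrac12$. Representing $\log\zeta$ by a contour integral with a kernel that produces the weight $\log(x/n)/\log x$ and shifting the line of integration past $\re s=\tfrac12+\alpha$ down to a line of real part slightly to the left of $\tfrac12$ gives, for $2\le x\le T$, $T\le t\le 2T$ and $\sigma_1=\tfrac12+\alpha+\tfrac1{\log x}$, an inequality of the shape
\begin{equation*}
\log\frac{1}{|\zeta(\tfrac12+\alpha+it)|}\;\le\;-\,\re\sum_{n\le x}\frac{\Lambda(n)}{n^{\sigma_1+it}\log n}\,\frac{\log(x/n)}{\log x}\;+\;\mathcal R_x(\alpha,t),
\end{equation*}
where $\mathcal R_x(\alpha,t)$ is built from a zero‑sum term of size $\asymp\frac1{\log x}\sum_{\gamma}\big(\alpha+\tfrac1{\log x}\big)\big/\big((\alpha+\tfrac1{\log x})^2+(t-\gamma)^2\big)$ together with a deterministic remainder $O\!\big(1/((\alpha+\tfrac1{\log x})\log x)\big)$ from the shifted integral, the latter controlled by the conditional bound for $\log|\zeta|$ near the critical line. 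Unlike Soundararajan's inequality for $\log|\zeta|$, this one genuinely sees the zeros, since the line of integration cannot be pushed all the way to $\re s=\tfrac12$; producing a usable remainder at the exact abscissa $\tfrac12+\alpha$ rather than a little to its right is the delicate point.

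\emph{Step 2: isolating the small values.} Let $\mathcal B\subseteq[T,2T]$ be the set where the zero‑sum part of $\mathcal R_x$ exceeds a parameter $V$ — equivalently where an abnormally large number of zeros lie within $O(1/\log x)$ of $t$ — so that $\mathrm{meas}(\mathcal B)\ll T\exp(-cV\log x/\log T)$ or a comparable bound, by standard zero‑density estimates under RH. On $\mathcal B$ we use a conditional a priori bound $1/|\zeta(\tfrac12+\alpha+it)|\ll\exp\!\big(O(\log T\log\log\log T/\log\log T)\big)$ (valid under RH in the relevant range, and essentially a consequence of Step 1 applied with a short $x$); this is precisely what contributes the exponentials in \eqref{5'}--\eqref{7} (and the factor $\exp((4+\varepsilon)\log T\log\log\log T/\log\log T)$ of \eqref{2} in Theorem \ref{mainthm1}). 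On $[T,2T]\setminus\mathcal B$ the error $\mathcal R_x$ is at most $V+O(1/((\alpha+\tfrac1{\log x})\log x))$, which is deterministic and can be pulled out of the integral.

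\emph{Step 3: the exponential moment of the prime sum, and optimisation.} It remains to bound $\frac1T\int_T^{2T}\exp\!\big(-2k\,\re\mathcal P(\tfrac12+\alpha+it)\big)\,dt$, where $\mathcal P$ is the Dirichlet polynomial of Step 1. Split $\mathcal P=\mathcal P_{\le z}+\mathcal P_{(z,x]}$. Since $\re\mathcal P_{(z,x]}$ is bounded, $\exp(-2k\,\re\mathcal P_{(z,x]})$ equals a Dirichlet polynomial of length $\le T^{1-\delta}$ for suitable $z,x$ and is handled by the mean value theorem $\frac1T\int_T^{2T}\big|\sum_{n\le N}a_nn^{-it}\big|^2dt=(1+o(1))\sum_{n\le N}|a_n|^2$; for $\mathcal P_{\le z}$ we factor $\exp(-2k\,\re\mathcal P_{\le z})=\prod_{p\le z}\exp\!\big(-2k\,\re(p^{-\sigma_1-it})+\cdots\big)$, truncate each exponential at a level depending on $p$, multiply out, and apply the mean value theorem again to get
\begin{equation*}
\frac1T\int_T^{2T}\exp\!\big(-2k\,\re\mathcal P_{\le z}(\tfrac12+\alpha+it)\big)\,dt\;\ll\;\prod_{p\le z}\Big(1+\frac{k^2}{p^{1+2\alpha}}+\cdots\Big)\;\ll\;\exp\!\Big(k^2\sum_{p\le z}\frac1{p^{1+2\alpha}}\Big),
\end{equation*}
with $\sum_{p\le z}p^{-1-2\alpha}\asymp\min(\log\log z,\log(1/\alpha))$. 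Choosing $x,z,V$ in terms of $\alpha,T$: when $\alpha\gg\log\log T/\log T$ one takes $\log x$ proportional to $\log T$, making $\mathcal B$ and the deterministic error negligible and yielding the near‑optimal \eqref{4}; as $\alpha$ decreases, $x$ must be shortened and $V$ enlarged, the contribution of $\mathcal B$ grows and successively dominates, giving \eqref{5'}, then \eqref{6}, then \eqref{7}, where one balances $T^{(1+\delta)(ku-\frac12+k\varepsilon)}$ against the a priori factor. (The case $k<\tfrac12$ is pushed further than $k\ge\tfrac12$ because here the conjectured order is matched by lower bounds, so it is worth optimising through more of the range, which forces the more careful choice of truncation levels above.) I expect the main obstacle to be Step 1 and its interface with Step 2: obtaining a clean pointwise inequality at the exact line $\tfrac12+\alpha$ while simultaneously quantifying how rare and how large the excursions of $1/|\zeta|$ can be, and then making the two thresholds agree — this matching is what pins down the four ranges of $\alpha$ and the exponents in \eqref{4}--\eqref{7}.
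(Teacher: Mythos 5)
Your overall strategy — isolate the set where $\zeta$ is abnormally small, use a conditional pointwise bound there, and bound the exponential moment of a prime Dirichlet polynomial elsewhere — is the right flavor, but the route you sketch differs from the paper's in ways that create genuine gaps.

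First, in Step 1 you keep the zero-sum explicit (a Soundararajan-style shifted-contour inequality), and in Step 2 you plan to bound the measure of the bad set $\mathcal B$ of high zero-clustering by "standard zero-density estimates under RH" with a tail of the form $\exp(-cV\log x/\log T)$. No such off-the-shelf estimate exists; proving it would essentially be a Selberg-type large-deviation bound for zero counts in short windows, and since under the explicit formula the zero sum and the prime sum are two faces of the same quantity, treating them as independent loses the cancellation you need. The paper avoids this entirely: Lemma \ref{key_ineq} is the Carneiro--Chandee extremal-majorant lower bound for $\log|\zeta(\tfrac12+\alpha+it)|$, whose kernel is designed so that the zero-sum collapses into the deterministic term $\frac{\log(t/2\pi)}{2\pi\Delta}\log(1-e^{-2\pi\Delta\alpha})$. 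Nothing about zero-density is invoked; the "bad set" in the paper is defined by the prime-sum block $P_{0,v}$ being large (the set $[T,2T]\setminus\mathcal T_0$), and its measure is controlled by mean-value theorems for Dirichlet polynomials plus the pointwise bound of Lemma \ref{CClemma}.

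Second, your Step 3 uses a single cutoff $z$ inside the Dirichlet polynomial and the mean value theorem once. The paper's proof, even for $k<1/2$, relies on Harper's nested dyadic decomposition $I_0,\dots,I_K$ with associated truncated exponentials $E_{\ell_h}$ (Lemmas \ref{lem_initial}--\ref{K_lemma}), and on a careful choice of $\beta_j, s_j, \ell_j$ and the parameters $a,d,r$ (Sections \ref{proof}--\ref{bigshift}). This multi-scale structure is what produces the precise exponents in \eqref{4}--\eqref{6} (and the bound \eqref{1} for $k\geq1/2$ further requires the recursion of Section \ref{s_recursive}). A single-scale version in the spirit of Soundararajan would incur extra $\exp(C\log T/\log\log T)$-type losses that overwhelm the target in \eqref{4} and would not yield the stated constants in \eqref{5'} and \eqref{6}.

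Third, the a priori bound $1/|\zeta(\tfrac12+\alpha+it)|\ll\exp\big(O(\log T\log\log\log T/\log\log T)\big)$ that you invoke on $\mathcal B$ is incorrect as a uniform pointwise bound for small $\alpha$: the correct conditional bound (Lemma \ref{CClemma}, displayed as \eqref{CCbound}) is $\big(1-(\log T)^{-2\alpha}\big)^{-(1+\varepsilon)\log T/(2\log\log T)}$, which at $\alpha\asymp 1/\log T$ is already of size $T^{1/2+o(1)}$ and grows toward $T^{ku+o(1)}$ as $\alpha$ shrinks. With a bad-set bound no better than you claim, multiplying by this pointwise factor would not close the argument in the ranges \eqref{5'} and \eqref{6}. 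So the matching between Steps 2 and 3 that you yourself flag as the delicate point is indeed where the proposal breaks down.
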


\kommentar{\begin{thm}
\label{mainthm}
Assume RH.
Let $k>0$, $\alpha>0$ and $\varepsilon>0$ such that $u=\frac{\log\frac{1}{\alpha}}{ \log \log T}\ll 1$. Then
\begin{equation*}
\frac{1}{T} \int_T^{2T} \powerzeta \, dt  \ll   
\end{equation*}
\begin{numcases}{}
(\log \log T)^k\Big( \frac{\log(\alpha \log T)}{\alpha} \Big)^{k^2} & if $\alpha\gg \frac{ \log \log T}{\log T}, k  < 1/2$, \label{first} \\
 \exp \Big(C_1 (\log \log T) \Big( \log \frac{1}{\alpha \log T \log \log T} \Big)^2 \Big) & if $ \frac{1}{\log T} \ll \alpha = o \Big( \frac{ \log \log T}{\log T} \Big), k <1/2$, \label{second} \\
\exp \Big( C_2 \Big( \frac{1}{\alpha \log T} \Big)^{\frac{2k(1+\varepsilon)}{1-2k}} \Big) & if $ \frac{1}{(\log T)^{\frac{1}{2k}-\varepsilon}} \ll \alpha = o \Big( \frac{1}{\log T} \Big), k < 1/2$, \label{third}\\
(\log \log T)^k\Big( \frac{\log(\alpha \log T)}{\alpha} \Big)^{k^2} & if $ \alpha \gg \frac{(\log \log T)^{\frac{4}{k}+\varepsilon}}{( \log T)^{\frac{1}{2k}}}, k \geq 1/2$, \label{fourth}\\
\exp \Big( \frac{ (4+\varepsilon) \log T \log \log \log T}{\log \log T} \Big) & if $\frac{1}{(\log T)^{\frac{1}{2k}}} \ll \alpha = o \Big( \frac{ (\log \log T)^{\frac{4}{k}+\varepsilon}}{(\log T)^{\frac{1}{2k}}} \Big), k \geq 1/2$, \label{fifth} \\ 
T^{(1+\varepsilon)(ku-1/2+2k\varepsilon)}  &  if $ \alpha=o\Big(\frac{1}{(\log T)^{\frac{1}{2k}-\varepsilon}}\Big)$, \label{sixth}
\end{numcases}
for some constants $C_1, C_2>0$. 
\end{thm}
}

\noindent\textbf{Remarks.}
\begin{enumerate}
\item We remark that results about negative moments of quadratic Dirichlet $L$--functions were recently proven in the function field setting, first in \cite{ratios} for shifts big enough, and later improved in \cite{fl_neg} to include small shifts.
\item We note that the  bound \eqref{1} in Theorem \ref{mainthm1} and the bound \eqref{4} in Theorem \ref{mainthm2} above are ``almost sharp''. The former is off possibly by some powers of $\log$, while the latter is off by some $\log \log$ factor.
\end{enumerate}

We also further use the upper bounds to obtain an asymptotic formula for the negative moments in the following ranges.


\begin{thm}
\label{thm_asymptotic}
Assume RH. Let $k>0$, $C,\varepsilon>0$ and $\alpha \geq \max \Big\{ C\frac{(\log \log T)^{\frac{4}{k}+\varepsilon}}{(\log T)^{\frac{1}{2k}}},  \frac{(1+\varepsilon)\log \log T}{2\log T} \Big\}$. Then we have 
$$ \frac{1}{T} \int_T^{2T} \powerzeta \, dt  = \big(1+o(1)\big)\zeta(1+2\alpha)^{k^2}\prod_{p}\Big(1-\frac{1}{p^{1+2\alpha}}\Big)^{k^2}\Big(1+\sum_{j=1}^\infty\frac{\mu_k(p^j)^2}{p^{(1+2\alpha)j}}\Big),$$
where $\mu_k(n)$ denotes the $n^{\text{th}}$ Dirichlet coefficient of $\zeta(s)^{-k}$.
\end{thm}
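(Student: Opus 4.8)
The plan is to obtain the asymptotic formula by combining a lower bound, valid in this range by the work of Gonek \cite{gonek} (or derivable directly), with a matching upper bound extracted from the proof of Theorems \ref{mainthm1} and \ref{mainthm2} — not from the stated bounds \eqref{1} and \eqref{4}, which carry spurious $\log\log T$ factors, but from a sharper intermediate estimate the proof produces before those losses are incurred. Concretely, I expect that somewhere in the proof of the upper bounds one controls $\frac1T\int_T^{2T}|\zeta(\tfrac12+\alpha+it)|^{-2k}\,dt$ by a mean value of a Dirichlet polynomial (a truncated Euler product / resolvent of $\zeta$), and that the main term of that mean value is exactly the arithmetic factor $\zeta(1+2\alpha)^{k^2}\prod_p(1-p^{-1-2\alpha})^{k^2}(1+\sum_{j\ge1}\mu_k(p^j)^2 p^{-(1+2\alpha)j})$. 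The job is then to show this main term dominates and that all error terms are $o(1)$ relative to it precisely when $\alpha$ exceeds the two stated thresholds.

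First I would set up the lower bound: by Hölder (or Cauchy–Schwarz) applied to $\int |\zeta|^{-2k}\cdot|\zeta|^{2k}$ type manipulations, or more directly by the mollification/twisted-moment method used by Gonek, one gets $\frac1T\int_T^{2T}|\zeta(\tfrac12+\alpha+it)|^{-2k}\,dt \geq (1+o(1))\,\mathcal{A}(\alpha)$ where $\mathcal{A}(\alpha)$ is the claimed arithmetic factor, uniformly for $\alpha$ in the stated range. This uses RH in the form of the explicit formula / pointwise bounds on $\log|\zeta|$ that underlie the whole paper. Second, I would revisit the upper bound argument: the Dirichlet polynomial $D(s)=\sum_{n\le N}\mu_k(n)\chi(n) n^{-s}$ (or the analogous mollifier-type object, with $N$ a power of $T$) approximates $\zeta(\tfrac12+\alpha+it)^{-k}$, and its second moment $\frac1T\int_T^{2T}|D(\tfrac12+\alpha+it)|^2\,dt$ equals $\sum_{n\le N}\mu_k(n)^2 n^{-1-2\alpha} + (\text{diagonal error}) + (\text{off-diagonal})$ by the standard mean-value theorem for Dirichlet polynomials. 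The tail $\sum_{n>N}\mu_k(n)^2 n^{-1-2\alpha}$ contributes the $o(1)$ relative error provided $N\alpha \gg$ (something like) a large constant, i.e. $N$ is large enough compared to $1/\alpha$; the off-diagonal terms are $\ll N^{1+\epsilon}/T$, which is negligible as long as $N$ can be taken $\le T^{1-\delta}$. Combining, the upper bound matches $\mathcal{A}(\alpha)(1+o(1))$ in exactly the overlap of "$N$ large enough relative to $1/\alpha$" and "$N$ small enough relative to $T$", and unwinding what "large enough" means for the two regimes $k\ge1/2$ and $k<1/2$ — where the length $N$ one is forced to use differs — produces the two thresholds $\alpha \gg (\log\log T)^{4/k+\varepsilon}(\log T)^{-1/(2k)}$ and $\alpha \gg (\log\log T)/(2\log T)$ respectively. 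Finally I would note that $\mathcal{A}(\alpha)$ is itself comparable to $(1/\alpha)^{k^2}$ for small $\alpha$ and to a constant for $\alpha\asymp1$, so the statement is meaningful throughout.

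The main obstacle I anticipate is not the lower bound (that is essentially Gonek's method, robust on RH) but making the upper bound genuinely asymptotic: the proof of Theorems \ref{mainthm1}–\ref{mainthm2} is engineered to produce \emph{bounds}, so several steps (splitting into large/small values of $|\zeta|$, Soundararajan–Harper-type distributional estimates, a union bound over dyadic ranges of $\log|\zeta|$) introduce multiplicative constants and $\log\log T$ losses that are harmless for an upper bound but fatal for an asymptotic. I would therefore need to isolate the "typical $t$" contribution — where $\zeta(\tfrac12+\alpha+it)^{-k}$ is genuinely well-approximated by the Euler-product Dirichlet polynomial — and show the measure of the exceptional set (where $|\zeta|$ is atypically small, so the Dirichlet polynomial approximation fails) contributes $o(\mathcal{A}(\alpha))$. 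This is where RH and the size of $\alpha$ interact most delicately: the lower bound $\alpha \gg (\log\log T)/\log T$ (resp. the $k\ge1/2$ threshold) is exactly what forces the exceptional set to be small enough, via the fact that on RH, $|\zeta(\tfrac12+\alpha+it)|$ cannot be too small once $\alpha$ is a bit larger than the typical spacing $1/\log T$ of zeros. Getting the exceptional-set bound to beat $\mathcal{A}(\alpha)\asymp\alpha^{-k^2}$ with room to spare — rather than merely to beat the trivial bound — is the crux.
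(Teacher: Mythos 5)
Your plan takes a genuinely different route from the paper's, and the step you yourself flag as ``the crux'' is left unresolved; the paper's argument avoids that crux entirely.

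The paper does not sandwich the integral between a lower and an upper bound. It starts from the smoothed Perron identity
\[
\frac{1}{2\pi i}\int_{1-iT/2}^{1+iT/2}e^{z^2/2}X^z\,\frac{dz}{\zeta(\tfrac12+\alpha+it+z)^{k}\zeta(\tfrac12+\alpha-it+z)^{k}\,z}
=\sum_{m,n\geq 1}\frac{\mu_k(m)\mu_k(n)}{(mn)^{1/2+\alpha}}\Big(\frac mn\Big)^{-it}W\Big(\frac{mn}{X}\Big),
\]
shifts the $z$-contour to $\Re z=-(1-\varepsilon)\alpha$ (picking up exactly $|\zeta(\tfrac12+\alpha+it)|^{-2k}$), and then integrates over $t\in[T,2T]$. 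The right-hand side gives the diagonal main term (precisely the claimed arithmetic factor) plus off-diagonal contributions that are easy to bound; the shifted integral is estimated by Cauchy--Schwarz together with the already-proved upper bounds \eqref{1} and \eqref{4}, and carries a factor $X^{-(1-\varepsilon)\alpha}$. Choosing $X=T^2$ makes that factor $T^{-2(1-\varepsilon)\alpha}$, which overwhelms both the $\log\log T$ losses in \eqref{1}, \eqref{4} and the $(\log T)^{k^2}$ main-term size, precisely because $\alpha\geq\frac{(1+\varepsilon)\log\log T}{2\log T}$. That is where that threshold comes from; the other threshold is simply where the upper bound \eqref{1} (applied at shift $\varepsilon\alpha$ inside the Cauchy--Schwarz step) is available.

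Two things in your plan are therefore off target. First, you say the stated bounds \eqref{1}, \eqref{4} are unusable because of their ``spurious $\log\log T$ factors'' and that one must excavate a sharper intermediate estimate --- but the paper uses \eqref{1}, \eqref{4} exactly as stated; they only ever appear multiplied by $T^{-2(1-\varepsilon)\alpha}$, which makes the $\log\log T$ losses harmless. Second, and more seriously, your framework of approximating $\zeta^{-k}$ by a Dirichlet polynomial and then controlling an ``exceptional set'' where the approximation fails has a real gap: you would need the exceptional-set contribution to be $o(\alpha^{-k^2})$, and you correctly identify this as the hard point, but you do not supply an argument. The paper's contour-shift identity is exact and there is no pointwise approximation being made, so no exceptional set ever enters; the entire error is a concrete integral on a left-shifted contour where the moment bounds already proved apply directly. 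You do correctly identify the arithmetic factor as $\sum_n\mu_k(n)^2n^{-1-2\alpha}$ coming from a diagonal mean-value computation, and your remark that a separate lower bound is robust on RH is true but unnecessary, since the paper's argument computes the main term directly rather than matching bounds from above and below.
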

As an application, we study averages of the generalized M\"{o}bius function. Under RH, Littlewood \cite{L} proved that $\sum_{n \leq x} \mu(n) \ll x^{\frac12+\varepsilon}$. This was improved in several works [\ref{landau}, \ref{T}, \ref{MM}]. Soundararajan \cite{S} showed that  
$$ \sum_{n \leq x} \mu(n) \ll \sqrt{x} \exp \Big(  \sqrt{\log x} (\log \log x)^{14} \Big)$$
on RH. A note of Balazard and de Roton \cite{BR} on Soundararajan's paper \cite{S} improves the power of $\log \log x$ in the bound above, and it is stated in \cite{BR} that the power $(\log \log x)^{\frac52+\varepsilon}$ is the limitation of the ideas in \cite{S, BR}.

Assuming RH and the conjectured order of magnitude of the negative second moment of $\zeta'(\rho)$ by Gonek \cite{gonek} and Hejhal \cite{H}, Ng \cite{ng} also showed that
$$ \sum_{n \leq x} \mu(n) \ll \sqrt{x} (\log x)^{\frac32}.$$

Using Theorem \ref{mainthm1}, we improve the result in \cite{S}. As before, let $\mu_k(n)$ denote the $n^{\text{th}}$ Dirichlet coefficient of $\zeta(s)^{-k}$.  We prove the following.

\begin{thm}\label{mobius}
Assume RH. We have
$$\sum_{n \leq x} \mu_k(n)  \ll \begin{cases}
\sqrt{x} \exp \big(\varepsilon\sqrt{\log x}\big) & \emph{if }k<1,\\
\sqrt{x} \exp \Big((\log x)^{\frac{k}{k+1}} (\log \log x)^{\frac{7}{k+1}+\varepsilon} \Big) & \emph{if }k\geq 1.
\end{cases} $$
Also, if we assume Conjecture \ref{gonek_conj}, then
$$ \sum_{n \leq x} \mu_k(n) \ll \begin{cases}
\sqrt{x} (\log x)^{\frac{k^2}{4}} & \emph{if }k\leq2,\\
\sqrt{x} \exp \Big((\log x)^{\frac{k-2}{k}} (\log \log x)^{-1+\varepsilon} \Big) & \emph{if }k>2.
\end{cases}$$
\end{thm}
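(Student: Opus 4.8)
The plan is to run the classical Mellin/Perron argument of Littlewood and Soundararajan for $M(x)=\sum_{n\le x}\mu(n)$, now fed by the negative‑moment estimates of Theorems \ref{mainthm1}--\ref{mainthm2}, using that $\mu_k(n)$ is the $n$‑th Dirichlet coefficient of $\zeta(s)^{-k}$. First I would write down the truncated Perron formula: with $c=1+1/\log x$,
\[
\sum_{n\le x}\mu_k(n)=\frac{1}{2\pi i}\int_{c-iT}^{c+iT}\frac{x^{s}}{\zeta(s)^{k}}\,\frac{ds}{s}+O\!\Big(\frac{x(\log x)^{O(1)}}{T}+x^{o(1)}\Big),
\]
the error using $\sum_n|\mu_k(n)|n^{-c}\ll(\log x)^{O(1)}$. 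I would take $T\in[x,2x]$ chosen by a standard argument under RH so that $|\zeta(\sigma+iT)|^{-1}\ll(\log T)^{O(1)}$ for $\sigma\ge\tfrac12+\tfrac1{\log T}$, and then move the contour to $\re s=\tfrac12+\alpha$ for a parameter $\alpha>0$. Under RH the integrand is holomorphic in $\re s>\tfrac12$ (the pole of $\zeta$ at $s=1$ becomes a zero of $\zeta(s)^{-k}$, and $s=0$ lies to the left of the new line), so no residue is crossed; the horizontal segments contribute $\ll\sqrt x$, the sliver $\tfrac12+\alpha\le\sigma\le\tfrac12+\tfrac1{\log T}$ being absorbed by the trivial bound $|\zeta|^{-1}\ll\exp(O(\log T/\log\log T))$, negligible after division by $T\asymp x$. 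This reduces the theorem to bounding
\[
x^{1/2+\alpha}\int_{-T}^{T}\frac{|\zeta(\tfrac12+\alpha+it)|^{-k}}{|\tfrac12+\alpha+it|}\,dt .
\]

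The core is this last integral. I would split $[-T,T]$ dyadically and, on a block $[T_0,2T_0]$ with $T_0$ not too small, bound $\tfrac1{T_0}\int_{T_0}^{2T_0}|\zeta(\tfrac12+\alpha+it)|^{-k}\,dt$ by applying Theorem \ref{mainthm1} (when $k\ge1$, so the parameter stays $\ge\tfrac12$) or Theorem \ref{mainthm2} (when $k<1$) \emph{with $k$ replaced by $k/2$} — estimating the $k$‑th negative moment directly in this way is sharper than Cauchy--Schwarz followed by the $2k$‑th moment bound. For $|t|$ below $\exp((\log x)^{1/2})$ I would instead use the RH convexity bound $|\zeta(\sigma+it)|^{-1}\ll t^{\varepsilon}$, which is affordable since $\tfrac12\le\tfrac{k}{k+1}$ for $k\ge1$. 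Once $\alpha$ is fixed the heights fall into three ranges: large $T_0$, where the strong bound \eqref{1} (with parameter $k/2$) applies and contributes only a power of $\log x$; a transitional range $\log T_0\asymp(1/\alpha)^{k}$, where only \eqref{2} is available, contributing $\exp\!\big(\Theta\big((1/\alpha)^{k}(\log\tfrac1\alpha)^{7+O(\varepsilon)}\log\log\tfrac1\alpha\big)\big)$; and smaller $T_0$, where \eqref{3} gives a weak but adequate estimate. Balancing $x^{\alpha}=\exp(\alpha\log x)$ against the transitional contribution forces $\alpha\asymp(\log x)^{-1/(k+1)}(\log\log x)^{7/(k+1)+\varepsilon}$ and yields $\sqrt x\exp\big((\log x)^{k/(k+1)}(\log\log x)^{7/(k+1)+\varepsilon}\big)$ for $k\ge1$; for $k<1$ the same scheme with Theorem \ref{mainthm2} and $\alpha$ a small power of $1/\log x$ makes the transitional contribution only $\exp(o(\sqrt{\log x}))$, giving $\sqrt x\exp(\varepsilon\sqrt{\log x})$. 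For the conditional statement one repeats the argument with the negative‑moment inputs replaced by Gonek's conjectured orders for the parameter $k/2$ (namely $\asymp\alpha^{-k^2/4}$ for $\alpha\ge1/\log T_0$ and the small‑shift orders of Conjecture \ref{gonek_conj} otherwise); this removes the loss in the transitional range and, after optimization, gives $\sqrt x(\log x)^{k^2/4}$ for $k\le2$ and $\sqrt x\exp\big((\log x)^{(k-2)/k}(\log\log x)^{-1+\varepsilon}\big)$ for $k>2$.

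The main difficulty — and the reason the unconditional bound is so much weaker than the conditional one — is precisely the tension just described: the strong estimate \eqref{1} needs the shift to be large relative to the height ($\alpha\gg(\log T_0)^{-1/(2\kappa)}$ with $\kappa=k/2$), whereas the contour inevitably runs up to height $\asymp x$, and shrinking $\alpha$ to control $x^{\alpha}$ merely pushes the transitional heights $\log T_0\asymp(1/\alpha)^{k}$ out to where \eqref{2}, of size roughly $\exp(\log T_0/\log\log T_0)$, blows up. Getting the $\log\log x$‑exponent down to $\tfrac{7}{k+1}+\varepsilon$ (the improvement over Soundararajan's $\exp(\sqrt{\log x}(\log\log x)^{14})$ for $\mu$) requires careful tracking of the poly‑$\log\log$ factors in the range of validity of \eqref{1}. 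Two further points, routine but needed, are the choice of a Perron truncation height $T$ taming the horizontal segments under RH, and the treatment of the small‑$|t|$ range (near the low‑lying zeros), where $\alpha$ is tiny compared with $T_0$ and one falls back on the convexity bound $|\zeta|^{-1}\ll t^{\varepsilon}$.
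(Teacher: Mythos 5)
The skeleton is the same as the paper's — Perron's formula, shift the contour left, apply the negative-moment theorems with parameter $k/2$ (so that $\int|\zeta|^{-k}\,dt$ is controlled), and optimize the shift to balance $x^{\alpha}$ against the moment bounds — and you correctly land on $\alpha\asymp(\log x)^{-1/(k+1)}(\log\log x)^{7/(k+1)+\varepsilon}$. The route, however, is genuinely different in one structural respect: you keep a single vertical line $\re s=\frac12+\alpha$ all the way up to height $\asymp x$, whereas the paper uses a piecewise-linear staircase $\bigcup_j(V_j\cup H_j)\cup V_0$ in which the abscissa on $V_j$, at heights $\asymp 2^j x_0$, is $\frac12+\frac{(\log\log(2^{j-1}x_0))^{8/k+\varepsilon}}{(\log(2^{j-1}x_0))^{1/k}}$. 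This places every $V_j$ with $j\geq1$ inside the regime of the strong bound \eqref{1}, uses \eqref{2} only on the bottom segment $V_0$ (heights in $[T_0,x_0]$), and never uses \eqref{3} at all; below $T_0$ and on the horizontal pieces it falls back on Lemma \ref{CClemma}. Your single-line version must instead run through \eqref{1}, \eqref{2} and \eqref{3} at different heights and keep track of the applicability hypothesis $u=\frac{\log(1/\alpha)}{\log\log T_0}\ll1$, but your balancing is right and I believe the plan can be made to work.

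That said, two of the pointwise estimates you invoke are wrong as stated and would need to be replaced by Lemma \ref{CClemma} for the argument to close. First, the "RH convexity bound" $|\zeta(\sigma+it)|^{-1}\ll t^{\varepsilon}$ fails at $\sigma=\frac12+\alpha$ with $\alpha\asymp(\log x)^{-1/(k+1)}$: RH only yields $t^{\varepsilon}$ once $\alpha\gg\frac{1}{\log\log t}$, which never holds in your low-height range. What Lemma \ref{CClemma} actually gives there is $\log|\zeta(\tfrac12+\alpha+it)|^{-1}\ll\frac{\log t}{\log\log t}\log\frac{1}{\alpha\log\log t}$, which at $t=\exp((\log x)^{1/2})$ is $\asymp\frac{(\log x)^{1/2}}{k+1}$ — a genuine power of your cutoff height, not $t^{\varepsilon}$. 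This is still negligible because $\frac12\leq\frac{k}{k+1}$, so your intuition about affordability is correct, but the claimed bound is not. Second, the "standard argument" for choosing $T\in[x,2x]$ does not give $|\zeta(\sigma+iT)|^{-1}\ll(\log T)^{O(1)}$ on $\sigma\geq\frac12+\frac1{\log T}$; the zero-density argument only gives $\exp(O((\log T)^2))$, which is a positive power of $x$. Under RH one should drop this step and estimate the horizontal segment directly via Lemma \ref{CClemma}, splitting on $\sigma$ as the paper does for $H_J$. Finally, note that your sliver $\frac12+\alpha\leq\sigma\leq\frac12+\frac1{\log T}$ is empty once you optimize $\alpha\gg\frac1{\log x}$; the problematic $\sigma$-range near $\frac12$ on the horizontal segment is actually $[\frac12+\alpha,\frac12+o(\frac1{\log\log x})]$, which needs the sharper first estimate of Lemma \ref{CClemma}.
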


Proving Theorem \ref{mobius} requires good bounds for the negative moments of $\zeta(s)$ when roughly $\alpha \gg \frac{1}{(\log T)^{\frac{1}{2k}}}$. We remark that obtaining the bound \eqref{1} in Theorem \ref{mainthm1} is the most delicate part of the proof and requires a recursive argument which allows us to obtain improved estimates at each step.

The ideas in the proof of Theorems \ref{mainthm1} and \ref{mainthm2} are sieve-theoretic inspired ideas, as in the work of Soundararajan \cite{sound_ub} and Harper \cite{harper}. However, unlike in the work in \cite{sound_ub} and \cite{harper}, the contributions of zeros of $\zeta(s)$ play an important part, and one needs to be careful about the choice of parameters in the sieve-theoretic argument, in order to account for the big contributions coming from the zeros.

The paper is organized as follows. In Section \ref{prelim}, we prove some lemmas providing a lower bound for the logarithm of $\zeta(s)$, as well as a pointwise lower bound for $|\zeta(s)|$. In Section \ref{setup}, we introduce the setup of the problem, and prove three main propositions. In Section \ref{proof}, we consider the case of a ``big'' shift $\alpha$, and do the first steps towards proving Theorems \ref{mainthm1} and \ref{mainthm2} in this case. In Section \ref{bigshift}, we obtain the bound \eqref{1} in the smaller region $\alpha \gg \frac{1}{(\log T)^{\frac{1}{2k}-\varepsilon}}$, and prove the bounds \eqref{4}, \eqref{5'} and \eqref{6} from Theorem \ref{mainthm2} as well. In Section \ref{s_recursive}, we obtain the bound \eqref{1} in the wider region stated in the theorem by using a recursive argument. We consider the cases of ``small" shifts in Section \ref{smallshift} and prove the bounds \eqref{3} and \eqref{7}. We then prove Theorem \ref{thm_asymptotic} in Section \ref{section_formula}, and Theorem \ref{mobius} in Section \ref{section_mobius}.

Throughout the paper $\varepsilon$ denotes an arbitrarily small positive number whose value may change from one line to the next.\\

 \textsc{Acknowledgments.} The authors would like to thank Steve Gonek, Jon Keating and Nathan Ng for helpful conversations and comments. The second author gratefully acknowledges support from NSF grant DMS-2101769 while working on this paper. 
\section{Preliminary lemmas}
\label{prelim}
The first two lemmas concern the lower bounds for $|\zeta(s)|$.
\begin{lem}\label{key_ineq}
Assume RH. Let $\alpha >0$. Then
\begin{align*} &\log | \zeta(\tfrac12+\alpha+it) |\geq \frac{ \log  \frac{ t}{2 \pi } }{2 \pi \Delta} \log \big(1- e^{-2 \pi \Delta\alpha} \big)+\Re\Big( \sum_{p \leq e^{2 \pi \Delta}} \frac{(\log p)a_{\alpha}(p;\Delta)}{p^{1/2+\alpha+it}}\Big)\\
	&\qquad\qquad - \frac{\log \log \log T}{2} - \Big( \log \frac{1}{\Delta \alpha} \Big)^{\gamma(\Delta)} + O \Big(  \frac{\Delta^2 e^{\pi \Delta}}{1+\Delta t} + \frac{\Delta \log (1+ \Delta \sqrt{t})}{\sqrt{t}}+1\Big),
\end{align*}
where \begin{equation}\label{formulaa}
a_{\alpha}(n;\Delta)=\sum_{j=0}^{\infty} \Big( \frac{(j+1)}{ \log n+2 \pi j \Delta} e^{-2 \pi j\Delta\alpha}- \frac{(j+1)n^{2\alpha}}{ 2\pi(j+2)\Delta - \log n} e^{-2 \pi(j+2) \Delta\alpha}  \Big)\end{equation}
and $$\gamma(\Delta)=\begin{cases}1 & \text{if }\Delta \alpha = o(1),\\ 0& \text{if }\Delta \alpha \gg 1.\end{cases}$$
\end{lem}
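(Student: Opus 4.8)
The plan is to start from the classical explicit-formula/Hadamard-product expression for $\log\zeta(s)$ and convert it, under RH, into a lower bound. The standard device (going back to Selberg, and used by Soundararajan and Harper) is to work with a smoothed sum over primes and prime powers at the cost of controlling a sum over zeros. Concretely, I would fix a parameter $\Delta$ (of size roughly $\log\log\log T$ in the applications, though the statement is uniform) and use the weight $e^{2\pi j\Delta}$-truncation, i.e. consider $\log\zeta(s)$ expanded via $-\zeta'/\zeta$ integrated, against a kernel supported on $[0,2\pi\Delta]$ in the "$\log n$" variable. The key point of the lemma is that the main term $\frac{\log(t/2\pi)}{2\pi\Delta}\log(1-e^{-2\pi\Delta\alpha})$ is exactly what the density of zeros ($\sim \frac{1}{2\pi}\log\frac{t}{2\pi}$ zeros per unit length, all on the critical line under RH) contributes when one integrates the zero-sum $\sum_\rho$ against the same kernel; the geometric-type series in $j$ producing $\log(1-e^{-2\pi\Delta\alpha})$ comes from summing the kernel values $e^{-2\pi j\Delta\alpha}$ over the "copies" of each zero seen at distance $2\pi j\Delta$.

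The steps, in order, would be: (i) Write $\log\zeta(\tfrac12+\alpha+it)=\re\log\zeta$, and use the Hadamard factorization together with RH to express $\re\log\zeta(s)$ for $s=\tfrac12+\alpha+it$ as (prime-power sum) $+$ (sum over zeros $\rho=\tfrac12+i\gamma$ of $\log|s-\rho|$-type terms) $+$ (archimedean/$\Gamma$-factor terms of size $O(\log t)$). Since $\re(s-\rho)=\alpha>0$ under RH, each zero term $\re\log(s-\rho)\ge \log\alpha$ crudely, but that is too lossy; instead one keeps the zero contribution and estimates it more carefully. (ii) Apply a Perron/Mellin smoothing: convolve with a kernel whose support in $\log n$ is $[0,2\pi\Delta]$, so that the prime sum becomes $\re\sum_{p\le e^{2\pi\Delta}}\frac{(\log p)a_\alpha(p;\Delta)}{p^{1/2+\alpha+it}}$ with the explicit coefficients $a_\alpha(n;\Delta)$ from \eqref{formulaa} — these coefficients are precisely the Mellin coefficients obtained from expanding the kernel and the shift $n^{2\alpha}$ terms in the geometric series in $j$. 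One should also check $|a_\alpha(p;\Delta)|\ll 1/\log p$ or similar, so the prime sum is the genuine main oscillatory term. (iii) Evaluate the smoothed zero-sum: using the zero-counting function $N(t+h)-N(t)=\frac{h}{2\pi}\log\frac{t}{2\pi}+O(\log t)$ on intervals, the kernel integrated against zero density gives $\frac{\log(t/2\pi)}{2\pi\Delta}\sum_{j\ge 0}(\text{kernel mass at }2\pi j\Delta)$, and the alternating/$n^{2\alpha}$ structure collapses this to $\frac{\log(t/2\pi)}{2\pi\Delta}\log(1-e^{-2\pi\Delta\alpha})$ (note this is negative, as it must be for a lower bound). (iv) Collect error terms: the $O(\Delta^2 e^{\pi\Delta}/(1+\Delta t))$ and $O(\Delta\log(1+\Delta\sqrt t)/\sqrt t)$ come from the tails of Perron truncation and from the $\Gamma$-factor and trivial-zero/low-lying terms; the $-\frac{\log\log\log T}{2}$ and $-(\log\frac{1}{\Delta\alpha})^{\gamma(\Delta)}$ terms absorb, respectively, a normalization of the smoothing scale and the near-diagonal zero contribution in the regime $\Delta\alpha=o(1)$ versus $\Delta\alpha\gg1$ (this is why $\gamma(\Delta)$ is defined by cases). (v) Dropping any remaining genuinely negative contributions (never dropping positive ones) yields the stated inequality.

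The main obstacle will be step (iv): bookkeeping the regime-dependent error $(\log\frac1{\Delta\alpha})^{\gamma(\Delta)}$ cleanly. When $\Delta\alpha\gg1$ the zeros are "well-separated on the scale $\alpha$" and contribute only $O(1)$, so $\gamma=0$; but when $\Delta\alpha=o(1)$, the zeros within distance $\sim\alpha$ of the point $t$ can make $\log|\zeta(\tfrac12+\alpha+it)|$ very negative, and one must show this downward spike is no worse than $(\log\frac1{\Delta\alpha})^1$ — this requires a careful grouping of zeros by dyadic distance from $t$ and using $N(t+h)-N(t-h)\ll h\log t + 1$ to bound how many zeros can cluster. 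A secondary technical point is verifying that the series defining $a_\alpha(n;\Delta)$ converges and that the "second" family of terms (with $n^{2\alpha}$ and denominator $2\pi(j+2)\Delta-\log n$) does not blow up for $n$ near $e^{2\pi\Delta}$; since $\log n\le 2\pi\Delta < 2\pi(j+2)\Delta$ the denominator stays positive and bounded away from $0$, so this is routine but must be stated. Everything else — the Hadamard factorization input, the Perron smoothing, the zero-density estimate — is standard and can be cited or done quickly.
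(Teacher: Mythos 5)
Your overall framework (extremal-function majorization, explicit formula, density of zeros producing the $\log(1-e^{-2\pi\Delta\alpha})$ factor, Perron-style truncation) does match the spirit of the paper, which follows Carneiro--Chandee: one majorizes $f_\alpha(x)=\log\frac{4+x^2}{\alpha^2+x^2}$ by a Beurling--Selberg-type function $m_\Delta$ with $\widehat{m}_\Delta$ supported in $[-\Delta,\Delta]$, sums $m_\Delta(t-\gamma)$ over zeros, and applies the Guinand--Weil explicit formula. That much you have right at the level of strategy. However, there is a genuine gap in step (iv) of your plan: you attribute both the $-\tfrac{1}{2}\log\log\log T$ term and the $-(\log\frac{1}{\Delta\alpha})^{\gamma(\Delta)}$ term to ``a normalization of the smoothing scale and the near-diagonal zero contribution.'' That is not where they come from. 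After the explicit formula, the Dirichlet-series piece is $\Re\sum_{n\le e^{2\pi\Delta}}\frac{\Lambda(n)\,a_\alpha(n;\Delta)}{n^{1/2+\alpha+it}}$, i.e.\ it runs over \emph{all prime powers}, while the statement of the lemma isolates only the primes. The two correction terms arise from bounding from below the contribution of prime squares (and, negligibly, higher powers): decomposing $a_\alpha(p^2;\Delta)$ into a leading part of size $1/\log p^2$ and a secondary part controlled by $\eqref{bound_s}$, the leading part gives $-\tfrac12\log\log\log T + O(1)$ (exactly as in Soundararajan/Harper), and the secondary part gives $-(\log\frac{1}{\Delta\alpha})^{\gamma(\Delta)}$, which is $O(1)$ when $\Delta\alpha\gg1$ but not otherwise. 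Your proposed route, chasing this correction through a dyadic zero-clustering estimate $N(t+h)-N(t-h)\ll h\log t+1$, would not produce the correct $\Delta\alpha$-dependence and would be attacking the wrong sum: once you have applied the explicit formula, the zeros are gone, and the residual work is purely an arithmetic estimate on the $a_\alpha$-coefficients at prime-power arguments. You should also make explicit that the one-sided inequality at step (i) requires the pointwise majorization $m_\Delta\geq f_\alpha$ supplied by the Carneiro--Chandee construction; a generic Mellin/Perron truncation would not, by itself, give a sign.
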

\begin{proof}
We use the work in \cite{CC}. Let
$$f_{\alpha}(x) = \log \Big( \frac{4+x^2}{\alpha^2+x^2} \Big),$$ and let $m_{\Delta}(x)$ be an extremal majorant for $f_{\alpha}$, whose Fourier transform $\widehat{m}_{\Delta}$ is supported in $[-\Delta, \Delta]$, satisfying the properties from Lemma 8 in \cite{CC}. Equation $3.1$ in \cite{CC} gives
\begin{equation} \log | \zeta(\tfrac12+\alpha+it) |  \geq \Big( 1-\frac{\alpha}{2} \Big) \log \frac{t}{2} - \frac{1}{2} \sum_{\gamma} m_{\Delta}(t-\gamma)+O(1),
\label{ineq1}
\end{equation}
where the sum over $\gamma$ is over the ordinates of the zeros of $\zeta(s)$ on the critical line. Using the explicit formula, Equation $3.2$ in \cite{CC} leads to
\begin{align}
\sum_{\gamma} m_{\Delta}(t-\gamma) &=  m_{\Delta} \Big(t - \frac{1}{2i}   \Big) +m_{\Delta} \Big(t + \frac{1}{2i}   \Big)  - \frac{\log \pi}{2 \pi } \widehat{m}_{\Delta}(0)  \label{explicit} \\
&\ + \frac{1}{2 \pi } \int_{-\infty}^{\infty} m_{\Delta}(x) \Re \bigg(\frac{\Gamma'}{\Gamma} \Big( \frac{1}{4} + \frac{i (t-x)}{2} \Big) \bigg)\, dx - \frac{1}{\pi } \Re\bigg(\sum_{n=2}^{\infty} \frac{\Lambda(n)}{n^{1/2+it}} \widehat{m}_{\Delta} \Big( \frac{\log n}{2 \pi}  \Big)\bigg),\nonumber 
\end{align}
where 
\begin{align*}
\widehat{m}_{\Delta}(\xi) &= \sum_{j=0}^{\infty}\bigg(  \frac{j+1}{\xi+j\Delta} \Big(  e^{-2 \pi ( \xi+j\Delta)\alpha}- e^{-4 \pi (\xi+j\Delta)} \Big) \\
&\qquad\qquad\qquad\qquad- \frac{j+1}{(j+2)\Delta-\xi} \Big(e^{-2 \pi ((j+2)\Delta-\xi)\alpha} - e^{-4 \pi ((j+2)\Delta-\xi)} \Big) \bigg).
\end{align*}
For $n \leq e^{2 \pi \Delta}$ we note that 
$$ \widehat{m}_{\Delta} \Big( \frac{\log n}{2 \pi } \Big)= \frac{2\pi a_{\alpha}(n;\Delta)}{n^\alpha} + O\Big(\frac{1}{n^{2}\log n}\Big).$$
Combining that with \eqref{ineq1}, \eqref{explicit} and Equations $(3.3), (3.4), (3.5)$ in \cite{CC} we obtain
\begin{align}\label{allterms} 
\log | \zeta(\tfrac12+\alpha+it) |& \geq \frac{ \log  \frac{ t}{2 \pi } }{2 \pi \Delta} \log \big(1- e^{-2 \pi \Delta\alpha} \big)+\Re\Big( \sum_{n \leq e^{2 \pi \Delta}} \frac{\Lambda(n)a_{\alpha}(n;\Delta)}{n^{1/2+\alpha+it}} \Big) \\
	&\qquad\qquad+ O \Big(  \frac{\Delta^2 e^{\pi \Delta}}{1+\Delta t} + \frac{\Delta \log (1+ \Delta \sqrt{t})}{\sqrt{t}}+1\Big).\nonumber
\end{align}

It is easy to see that $a_{\alpha}(n;\Delta)\geq0$. Also, with $\log n\leq 2\pi\Delta$ we have 
\kommentar{\begin{align}
a_{\alpha}(n;\Delta)&=  \sum_{j=0}^{\infty}(j+1) e^{-2 \pi j \Delta\alpha}\Big(  \frac{1}{\log n + 2 \pi j \Delta}  - \frac{1}{2 \pi(j+2) \Delta-\log n}\Big)\\
&\qquad+\sum_{j=0}^{\infty}\frac{(j+1)e^{-2 \pi j \Delta\alpha}}{2 \pi(j+2) \Delta-\log n}  \big(1-n^{2\alpha}e^{-4 \pi \Delta\alpha}  \big) \label{sumj} \\
&\leq 2(2 \pi \Delta - \log n) \sum_{j=0}^{\infty} \frac{(j+1) e^{-2 \pi j \Delta\alpha}}{(\log n+2 \pi j \Delta)(2 \pi(j+2) \Delta-\log n)}+\frac{1}{2\pi\Delta}\frac{1-n^{2\alpha}e^{-4 \pi \Delta\alpha}}{1-e^{-2 \pi \Delta\alpha}} \\
& \leq \frac{1}{\log n} + \frac{1}{\pi \Delta} \log \frac{1}{1-e^{-2\pi \Delta \alpha}} + O \Big( \frac{1}{\Delta} \Big) \leq \frac{1}{\log n} + \frac{1}{\pi \Delta} \Big( \log \frac{1}{\Delta \alpha} \Big)^{\gamma(\Delta)}+ O \Big( \frac{1}{\Delta} \Big),
\end{align}
where $\gamma(\Delta)=1$ if $\Delta \alpha = o(1)$ and $\gamma(\Delta)=1$ if $\Delta \alpha \gg 1$.
}

\begin{align}
a_{\alpha}(n;\Delta)&<\frac{1}{\log n}+  \sum_{j=1}^{\infty}(j+1) e^{-2 \pi j \Delta\alpha}\Big(  \frac{1}{\log n + 2 \pi j \Delta}  - \frac{1}{2 \pi(j+2) \Delta-\log n}\Big) \nonumber\\
&\qquad\qquad+\sum_{j=1}^{\infty}\frac{(j+1)e^{-2 \pi j \Delta\alpha}}{2 \pi(j+2) \Delta-\log n}  \big(1-n^{2\alpha}e^{-4 \pi \Delta\alpha}  \big) \nonumber \\
&\leq \frac{1}{\log n}+2(2 \pi \Delta - \log n) \sum_{j=1}^{\infty} \frac{(j+1) e^{-2 \pi j \Delta\alpha}}{(\log n+2 \pi j \Delta)(2 \pi(j+2) \Delta-\log n)}\label{sumj'}\\
&\qquad\qquad+\frac{1-n^{2\alpha}e^{-4 \pi \Delta\alpha}}{2\pi\Delta}\sum_{j=1}^{\infty}e^{-2 \pi j \Delta\alpha}\nonumber \\
&< \frac{1}{\log n}+\frac{2(2 \pi \Delta - \log n)}{(2\pi\Delta)^2} \sum_{j=1}^{\infty} \frac{e^{-2 \pi j \Delta\alpha}}{j}+\frac{1-n^{2\alpha}e^{-4 \pi \Delta\alpha}}{2\pi\Delta(1-e^{-2 \pi \Delta\alpha})}\nonumber \\
& = \frac{1}{\log n} -\frac{2(2 \pi \Delta - \log n)}{(2\pi\Delta)^2} \log\big(1-e^{-2 \pi \Delta\alpha}\big)+O \Big( \frac{1}{\Delta} \Big).\label{sumj}
\end{align}
Note that if $\Delta \alpha \gg 1$, then the second term above is $O(\Delta^{-1})$, and if $\Delta \alpha = o(1),$ then it is
\begin{equation}
\leq \frac{1}{\pi \Delta} \log \frac{1}{\Delta \alpha}.
\label{bound_s}
\end{equation}

Now we use the expression \eqref{sumj} for $a_{\alpha}(n;\Delta)$ and evaluate the contribution from the prime squares in \eqref{allterms}.
When dealing with the main term of size $1/\log n$ above, we proceed similarly as in \cite{harper} or Lemma $2$ in \cite{S}. For the second term, we trivially bound the sum over primes and use the bound \eqref{bound_s}, and it follows that the contribution from prime squares is  
\begin{align*}
\geq - \frac{ \log \log \log T}{2} -  \Big(  \log \frac{1}{\Delta \alpha}\Big)^{\gamma(\Delta)} -O(1).
\end{align*}
Dealing similarly with the sum over prime cubes and higher powers and combining the equation above and \eqref{allterms} finishes the proof.  
\end{proof}

\begin{lem}\label{CClemma}
Assume RH. If $0<\alpha =o(\frac{1}{\log\log t})$, then 
\begin{align}\label{newCCbound} \log | \zeta(\tfrac12+\alpha+it) |&\geq \frac{ \log  t }{2\log\log t} \log \big(1-(\log t)^{-2\alpha} \big)\nonumber\\
&\qquad\qquad+ O \Big(  \frac{ \log  t }{\log\log t} \Big)+ O \Big(  \frac{ \log  t }{(\log\log t)^2} \log\frac{1}{1-(\log t)^{-2\alpha}}\Big),
\end{align}
and if $1/2-\alpha\ll\frac{1}{\log\log t}$, then
\begin{align*} \log | \zeta(\tfrac12+\alpha+it) |\geq-\log\log\log t+O(1),
\end{align*}
otherwise
\begin{align*} \log | \zeta(\tfrac12+\alpha+it) |\geq-\Big(\frac12+\frac{8\alpha}{1-4\alpha^2}\Big)\frac{(\log t)^{1-2\alpha}}{\log\log t}-\log\log\log t+O\Big(\frac{(\log t)^{1-2\alpha}}{(1-2\alpha)^2(\log\log t)^2}\Big).
\end{align*}
\end{lem}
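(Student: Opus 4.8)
The plan is to derive all three estimates from Lemma~\ref{key_ineq} by choosing the parameter $\Delta$ appropriately and then crudely bounding the Dirichlet polynomial over primes, the secondary error terms, and the term $(\log\frac{1}{\Delta\alpha})^{\gamma(\Delta)}$. The natural choice is $2\pi\Delta=\log\log t$, so that $e^{2\pi\Delta}=\log t$ and the prime sum in Lemma~\ref{key_ineq} runs over $p\le\log t$; this also makes the first main term equal to $\frac{\log t}{2\log\log t}\log(1-(\log t)^{-2\alpha})$, which is exactly the leading term claimed in \eqref{newCCbound}. First I would record the sizes of the error terms under this choice: since $\Delta\asymp\log\log t$ and $t\to\infty$, the terms $\frac{\Delta^2 e^{\pi\Delta}}{1+\Delta t}$ and $\frac{\Delta\log(1+\Delta\sqrt t)}{\sqrt t}$ are $O(1)$ (indeed $e^{\pi\Delta}=(\log t)^{1/2}$ is negligible against $t$), so the whole big-$O$ bracket is $O(1)$, absorbed into $O(\log t/\log\log t)$.

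The substantive step is bounding $\Re\big(\sum_{p\le e^{2\pi\Delta}}\frac{(\log p)a_\alpha(p;\Delta)}{p^{1/2+\alpha+it}}\big)$ from below. Here I would use the bound on $a_\alpha(n;\Delta)$ established inside the proof of Lemma~\ref{key_ineq}, namely $a_\alpha(n;\Delta)\le \frac{1}{\log n}-\frac{2(2\pi\Delta-\log n)}{(2\pi\Delta)^2}\log(1-e^{-2\pi\Delta\alpha})+O(1/\Delta)$, together with $a_\alpha(n;\Delta)\ge 0$, to see that $0\le a_\alpha(p;\Delta)\le \frac{1}{\log p}+O\big(\frac{1}{\Delta}\log\frac{1}{1-e^{-2\pi\Delta\alpha}}\big)+O(1/\Delta)$. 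Bounding the exponential trivially by $1$ in absolute value and summing $(\log p)a_\alpha(p;\Delta)$ over $p\le e^{2\pi\Delta}$ by Chebyshev/Mertens: the $1/\log p$ part contributes $O(\pi(e^{2\pi\Delta}))=O(e^{2\pi\Delta}/\Delta)=O(\log t/\log\log t)$, and the remaining part contributes $O\big(\frac{e^{2\pi\Delta}}{\Delta}\cdot(\log\Delta+\log\frac{1}{1-e^{-2\pi\Delta\alpha}})\big)=O\big(\frac{\log t}{\log\log t}\big)+O\big(\frac{\log t}{(\log\log t)^2}\log\frac{1}{1-(\log t)^{-2\alpha}}\big)$, where I have used $e^{-2\pi\Delta\alpha}=(\log t)^{-2\alpha}$. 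This matches exactly the two error terms displayed in \eqref{newCCbound}. Finally, with $\Delta\alpha=\alpha\log\log t$, the term $(\log\frac{1}{\Delta\alpha})^{\gamma(\Delta)}$ and $\frac{\log\log\log t}{2}$ are both $O(\log t/\log\log t)$ in this regime (in fact much smaller), so they get absorbed, finishing the first inequality.

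For the second and third inequalities I would \emph{not} take $2\pi\Delta=\log\log t$ but instead return to \eqref{ineq1} directly, or equivalently use Lemma~\ref{key_ineq} with a different choice: when $1/2-\alpha\ll 1/\log\log t$ the function $f_\alpha$ is small and the standard argument of Carneiro--Chandee (the source \cite{CC}) gives $\log|\zeta(\tfrac12+\alpha+it)|\ge -\log\log\log t+O(1)$ with essentially no prime-sum contribution, since taking $2\pi\Delta\asymp\log\log\log t$ forces the prime sum to be $O(1)$ and the main term $\frac{\log t}{2\pi\Delta}\log(1-e^{-2\pi\Delta\alpha})$ to be $O(1)$ because $1-e^{-2\pi\Delta\alpha}$ is bounded away from $0$. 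For the generic range, I would choose $2\pi\Delta=\log\log t$ again and now bound the prime sum trivially by its absolute value $\le\sum_{p\le\log t}(\log p)a_\alpha(p;\Delta)p^{-1/2-\alpha}$; using $a_\alpha(p;\Delta)\le \frac{1}{\log p}+O(1/\Delta)$ and partial summation with the prime number theorem, this is $\le (\tfrac12+\tfrac{8\alpha}{1-4\alpha^2}+o(1))\frac{(\log t)^{1-2\alpha}}{\log\log t}+O\big(\frac{(\log t)^{1-2\alpha}}{(1-2\alpha)^2(\log\log t)^2}\big)$ after summing $\sum_{p\le \log t}p^{1/2-\alpha}\asymp \frac{(\log t)^{3/2-\alpha}}{(3/2-\alpha)\log\log t}$ and tracking the constant (the $\frac{8\alpha}{1-4\alpha^2}$ comes from the secondary $n^{2\alpha}$-term in $a_\alpha$), and then $-\log\log\log t$ records the prime-square contribution exactly as in Lemma~\ref{key_ineq}. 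The main obstacle is the bookkeeping of explicit constants in this last case — one must carry the constant $\frac12+\frac{8\alpha}{1-4\alpha^2}$ through the partial summation honestly rather than hiding it in an $O$-term — but no new idea beyond Lemma~\ref{key_ineq} and Mertens-type estimates is required.
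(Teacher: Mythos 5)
Your overall strategy --- substitute a well-chosen $\Delta$ into Lemma~\ref{key_ineq} and crudely bound the remaining pieces --- is the same one the paper uses for the new bound \eqref{newCCbound}, and the paper simply cites \cite[Theorem~2]{CC} for the other two inequalities rather than re-deriving them. But there are two concrete errors in the core argument. First, the choice $2\pi\Delta=\log\log t$ is off by a factor of~$2$: that gives the main term $\frac{\log t}{2\pi\Delta}\log(1-e^{-2\pi\Delta\alpha})=\frac{\log t}{\log\log t}\log\big(1-(\log t)^{-\alpha}\big)$, not the claimed $\frac{\log t}{2\log\log t}\log\big(1-(\log t)^{-2\alpha}\big)$. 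Since for $\alpha=o(1/\log\log t)$ one has $\log(1-(\log t)^{-\alpha})\approx\log(\alpha\log\log t)$ and $\tfrac12\log(1-(\log t)^{-2\alpha})\approx\tfrac12\log(2\alpha\log\log t)$, the difference is of size $\asymp\frac{\log t}{\log\log t}\log\frac{1}{\alpha\log\log t}$, a full factor $\log\log t$ larger than the allowed error $O\big(\frac{\log t}{(\log\log t)^2}\log\frac{1}{1-(\log t)^{-2\alpha}}\big)$, so it cannot be absorbed. The correct choice, used in the paper, is $\Delta=\frac{\log\log t}{\pi}$, i.e.\ $e^{2\pi\Delta}=(\log t)^2$.

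Second, once $\Delta$ is corrected, bounding $|p^{-1/2-\alpha-it}|$ by~$1$ and invoking Chebyshev gives $\pi\big((\log t)^2\big)\asymp\frac{(\log t)^2}{\log\log t}$ for the ``$1/\log p$'' piece, which is far larger than the required $O(\log t/\log\log t)$; the estimate you wrote only appears to work because your (incorrect) $\Delta$ makes $e^{2\pi\Delta}=\log t$. You must retain the weight $p^{-1/2-\alpha}$ and estimate $\sum_{p\leq(\log t)^2}p^{-1/2-\alpha}\asymp\frac{(\log t)^{1-2\alpha}}{\log\log t}$ by partial summation and the prime number theorem, which is exactly how the paper handles the first and last terms in the three-way split of the sum. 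Your attempted re-derivations of the second and third inequalities are unnecessary (they are quoted verbatim from Carneiro--Chandee) and, as stated, the heuristic ``$f_\alpha$ is small when $\alpha$ is near $1/2$'' is not right --- $f_\alpha(0)=\log(4/\alpha^2)$ stays bounded away from~$0$ --- but this is moot given that the paper cites the result directly.
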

\begin{proof}
Carneiro and Chandee  established the last two bounds in \cite[Theorem 2]{CC} and in the case $0<\alpha =o(\frac{1}{\log\log t})$ they obtained
\[
\log | \zeta(\tfrac12+\alpha+it) |\geq \frac{ \log  t }{2\log\log t} \log \big(1-(\log t)^{-2\alpha} \big)+ O \Big(  \frac{( \log  t)^{1-2\alpha} }{(\log\log t)^2(1-(\log t)^{-2\alpha})} \Big).
\]
We will now prove the improved bound \eqref{newCCbound}.

Let $\Delta=\frac{\log\log t}{\pi}$. From \eqref{allterms} and \eqref{sumj} we have
\begin{align*} 
&\log | \zeta(\tfrac12+\alpha+it) | \geq \frac{ \log  \frac{ t}{2 \pi } }{2 \log\log t} \log \big(1- (\log t)^{-2\alpha} \big)\\
	&\qquad- \sum_{n \leq (\log t)^2} \frac{\Lambda(n)}{n^{1/2+\alpha}}\bigg(\frac{1}{\log n} -\frac{2(2\log\log t - \log n)}{(2\log\log t)^2} \log\big(1-(\log t)^{-2\alpha}\big)+O \Big( \frac{1}{\log\log t} \Big)\bigg)\\
&\qquad+O(1).\nonumber
\end{align*}
We now consider the sum over $n$ above, which expands into three terms. The contribution of the first and the last term is
\[
\ll\frac{\log t}{\log\log t},
\]
while the second term is
\begin{align*}
&=-\frac{\log(1-(\log t)^{-2\alpha} )}{\log\log t}\Big(\frac{2(\log t)^{1-2\alpha}}{1-2\alpha}+O\big((\log\log t)^3\big)\Big)\\
&\qquad\qquad+\frac{\log(1-(\log t)^{-2\alpha} )}{2(\log\log t)^2}\Big(\frac{4(\log t)^{1-2\alpha}\log\log t}{1-2\alpha}+O(\log t)\Big)\\
&=O\Big(  \frac{ \log  t }{(\log\log t)^2} \log\frac{1}{1-(\log t)^{-2\alpha}}\Big),
\end{align*}
by the prime number theorem and partial summation. This establishes \eqref{newCCbound}.
\end{proof}

 The next lemma is the classical mean value theorem (see, for instance, \cite[Theorem 6.1]{M}).
\begin{lem}\label{harperresultreplace}
			For any complex numbers $a(n)$ we have
			\begin{equation*}
				\int_T^{2T} \Big|\sum_{n\leq N} \frac{a(n)}{n^{1/2+it}}\Big|^{2} \, dt = \big(T+O(N)\big)\sum_{n\leq N}\frac{|a(n)|^2}{n}.
			\end{equation*}	\end{lem}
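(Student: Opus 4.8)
The plan is to expand the square, isolate the diagonal, and dispose of the off-diagonal terms via the classical Montgomery--Vaughan mean value inequality. Setting $D(t)=\sum_{n\le N}a(n)n^{-1/2-it}$, we have
\[
|D(t)|^2=\sum_{m,n\le N}\frac{a(m)\overline{a(n)}}{\sqrt{mn}}\Big(\frac{n}{m}\Big)^{it},
\]
so integrating over $[T,2T]$ and separating the terms with $m=n$ gives
\[
\int_T^{2T}|D(t)|^2\,dt=T\sum_{n\le N}\frac{|a(n)|^2}{n}+\sum_{\substack{m,n\le N\\ m\ne n}}\frac{a(m)\overline{a(n)}}{\sqrt{mn}}\int_T^{2T}\Big(\frac{n}{m}\Big)^{it}\,dt.
\]
The first term is exactly the asserted main term, so it remains to show the off-diagonal sum is $O\big(N\sum_{n\le N}|a(n)|^2/n\big)$.

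For $m\ne n$ one has $\int_T^{2T}(n/m)^{it}\,dt=\big((n/m)^{2iT}-(n/m)^{iT}\big)/\big(i\log(n/m)\big)$. Substituting this and writing $(n/m)^{i\theta}=n^{i\theta}m^{-i\theta}$, the off-diagonal sum becomes $S_{2T}-S_{T}$, where for $\theta\in\{T,2T\}$,
\[
S_\theta=-\frac1i\sum_{\substack{m,n\le N\\ m\ne n}}\frac{d_m\overline{d_n}}{\log m-\log n},\qquad d_n:=\frac{a(n)\,n^{-i\theta}}{\sqrt n},\quad |d_n|^2=\frac{|a(n)|^2}{n}.
\]
At this point I would invoke the Montgomery--Vaughan inequality (equivalently Hilbert's inequality; this is the substance behind \cite[Theorem 6.1]{M}): for distinct reals $\lambda_n$ and complex numbers $d_n$,
\[
\Big|\sum_{m\ne n}\frac{d_m\overline{d_n}}{\lambda_m-\lambda_n}\Big|\le C\sum_n\frac{|d_n|^2}{\delta_n},\qquad \delta_n:=\min_{m\ne n}|\lambda_m-\lambda_n|,
\]
applied with $\lambda_n=\log n$ for $n\le N$.

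The final ingredient is the spacing bound $\delta_n\gg 1/N$ uniformly for $n\le N$: among the $\log m$ with $m\le N$ and $m\ne n$, the value closest to $\log n$ is $\log(n\pm 1)$, and $\log(n+1)-\log n=\log(1+1/n)\gg 1/n\ge 1/N$, the case $n=1$ giving the even larger gap $\log 2$. Hence $\sum_n|d_n|^2/\delta_n\ll N\sum_n|d_n|^2=N\sum_{n\le N}|a(n)|^2/n$, which bounds each $S_\theta$ and therefore the whole off-diagonal contribution, completing the argument. The only delicate point is precisely why Montgomery--Vaughan is needed: estimating $1/|\log(n/m)|$ by the triangle inequality and summing loses a factor $\log N$ from the terms with $m\asymp n$, which would yield only the weaker error $O(N\log N)$; the Montgomery--Vaughan (Hilbert) inequality is exactly what removes this logarithm, and everything else is a routine computation.
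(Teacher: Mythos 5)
Your proof is correct and is essentially the argument behind the result the paper cites (Montgomery, \emph{Topics in multiplicative number theory}, Theorem~6.1), which is proved by exactly this route: expand the square, keep the diagonal as the main term, and control the off-diagonal sum via the Montgomery--Vaughan (generalized Hilbert) inequality together with the spacing bound $\min_{m\ne n}|\log m-\log n|\gg 1/N$ for $n\le N$. One small remark: the sharper form of the spacing, $\delta_n\gg 1/n$, actually yields the stronger error $O\big(\sum_{n\le N}|a(n)|^2\big)$ (which is how Montgomery states it, with $O(n)$ inside the sum), but the uniform $\delta_n\gg 1/N$ you use already gives the $O(N)$ claimed in the lemma.
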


Now let $\ell$ be an integer, and for $z \in \mathbb{C}$, let
$$E_{\ell}(z) = \sum_{s \leq \ell} \frac{z^s}{s!}.$$
If $z \in \mathbb{R}$, then for $\ell$ even, $E_{\ell}(z) >0$. 

We have the following elementary inequality.
\begin{lem}
\label{elem_ineq}
Let $\ell$ be an even integer and let $z$ be a complex number such that $|z| \leq \ell/e^2$. Then
$$e^{\Re(z)} \leq \max \Big\{ 1, |E_{\ell}(z)| \Big(1+\frac{1}{15e^{\ell}}\Big)\Big\}.$$
\end{lem}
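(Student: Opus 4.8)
The plan is to split into two cases according to the size of $\re(z)$. When $\re(z) \le 0$ the left-hand side $e^{\re(z)}$ is at most $1$, so the claimed bound holds with the "$1$" in the maximum; hence we may assume $\re(z) > 0$. In this regime we want to show $e^{\re(z)} \le |E_\ell(z)|(1 + \tfrac{1}{15e^\ell})$, and the natural route is to compare $E_\ell(z)$ with the full exponential series $e^z$ and control the tail. Write $e^z = E_\ell(z) + R_\ell(z)$ where $R_\ell(z) = \sum_{s > \ell} z^s/s!$. The first key step is to bound the tail: using $|z| \le \ell/e^2$ and $s! \ge (s/e)^s$, for $s \ge \ell$ we have $|z|^s/s! \le (|z| e/s)^s \le (|z| e/\ell)^s \le e^{-s}$, so summing the geometric-type series gives $|R_\ell(z)| \le \sum_{s > \ell} e^{-s} \cdot(\text{mild correction}) \ll e^{-\ell}$; one should be a little careful to extract the explicit constant so that $|R_\ell(z)| \le \tfrac{1}{15 e^\ell}|E_\ell(z)|$ or an absolute bound of comparable shape — I'd aim to show $|R_\ell(z)| \le \tfrac{c}{e^\ell}$ with $c$ small, say $c \le 1/14$ or so, using the even-ness of $\ell$ only insofar as it guarantees $E_\ell$ is real and positive when $z$ is real.

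The second key step is the triangle inequality in the form $|e^z| \le |E_\ell(z)| + |R_\ell(z)|$, which gives $e^{\re(z)} = |e^z| \le |E_\ell(z)| + |R_\ell(z)|$. To convert the additive tail bound into the multiplicative factor $1 + \tfrac{1}{15e^\ell}$, I need a lower bound on $|E_\ell(z)|$ that is bounded below by an absolute constant (so that $|R_\ell(z)| \le \tfrac{1}{15e^\ell}|E_\ell(z)|$ follows from $|R_\ell(z)| \le \tfrac{c}{e^\ell}$ with $c$ sufficiently small relative to that constant). Here the hypothesis $\re(z) > 0$ helps: one expects $|E_\ell(z)|$ to be reasonably close to $|e^z| \ge 1$, since $|e^z - E_\ell(z)| = |R_\ell(z)| \ll e^{-\ell}$ is tiny. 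Concretely, $|E_\ell(z)| \ge |e^z| - |R_\ell(z)| \ge 1 - \tfrac{c}{e^\ell} \ge \tfrac12$ for $\ell$ not too small, and one checks small $\ell$ (recall $\ell$ is even, so $\ell \ge 2$, though really the constraint $|z| \le \ell/e^2$ forces $\ell$ to be at least a modest size for the statement to be non-vacuous) directly.

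Assembling: $e^{\re(z)} \le |E_\ell(z)| + |R_\ell(z)| \le |E_\ell(z)| + \tfrac{1}{15e^\ell}|E_\ell(z)| = |E_\ell(z)|(1 + \tfrac{1}{15 e^\ell})$, which is the desired bound in the second case, completing the proof. The main obstacle — really the only non-routine point — is getting the constants to line up cleanly: one must choose the tail estimate sharp enough that the ratio $|R_\ell(z)|/|E_\ell(z)|$ genuinely falls below $\tfrac{1}{15 e^\ell}$ for all even $\ell$ with the given constraint on $|z|$, which may require either verifying the smallest admissible values of $\ell$ by hand or slightly strengthening the factor $e^{-2}$ in the hypothesis in the tail computation. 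The even-ness of $\ell$ is a red herring for the inequality itself (it matters for the earlier remark that $E_\ell(z) > 0$ for real $z$, which is presumably what this lemma will be applied to downstream), so I would not lean on it in the argument beyond noting $\ell \ge 2$.
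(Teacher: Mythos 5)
Your strategy is sound and the core estimates are the same as the paper's: both proofs write $e^{\Re(z)} = |e^z| \leq |E_\ell(z)| + |e^z - E_\ell(z)|$ and then bound the tail $|e^z - E_\ell(z)| \leq \sum_{j>\ell}|z|^j/j!$ using $|z|\le \ell/e^2$. The structural difference is the case split. You split on the sign of $\Re(z)$: when $\Re(z)\le 0$ the left side is $\le 1$, and when $\Re(z)>0$ you deduce a lower bound $|E_\ell(z)| \ge |e^z| - |R_\ell(z)| > 1 - \tfrac{1}{16e^\ell} \ge \tfrac{15}{16}$ and then fold the additive tail into the multiplicative factor. The paper instead splits directly on the size of $|E_\ell(z)|$: from $e^{\Re(z)} \le |E_\ell(z)| + \tfrac{1}{16e^\ell}$ it takes the two cases $|E_\ell(z)| \le \tfrac{15}{16}$ (giving $e^{\Re(z)} \le \tfrac{15}{16}+\tfrac{1}{16}=1$) and $|E_\ell(z)| > \tfrac{15}{16}$ (giving the multiplicative bound). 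The paper's split is a bit cleaner since it conditions directly on the quantity that decides which branch of the $\max$ is relevant, sparing you the extra step of deriving a lower bound on $|E_\ell(z)|$ from $\Re(z)>0$; your route is equally valid, just one inference longer.

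On the constants: your suggested $c\le 1/14$ in $|R_\ell(z)|\le c\,e^{-\ell}$ is actually not quite small enough. Following your own chain, you need $c/e^\ell \le |E_\ell(z)|/(15 e^\ell)$ with $|E_\ell(z)| \ge 1 - c/e^\ell$, which forces $c \le 1/(15 + e^{-\ell}) < 1/15$; so $c=1/14$ narrowly fails, while $c=1/16$ — the value the paper quotes from Lemma 1 of Radziwi\l\l--Soundararajan — does work. You flagged that the constant needs care, and indeed this is exactly where the margin is thin; the rest of your argument goes through.
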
	
\begin{proof}
We have 
\begin{align*}
e^{\Re (z)} = |e^z| \leq |e^z - E_{\ell}(z)|+|E_{\ell}(z)|.
\end{align*}
Now we have
$$|e^z-  E_{\ell}(z)| \leq \sum_{j=\ell+1}^{\infty} \frac{|z|^j}{j!},$$ and we can proceed as in \cite{sound_maks} (see Lemma $1$) to get that
$$ |e^z-  E_{\ell}(z)| \leq \frac{1}{16e^{\ell}}.$$ Hence
$$ e^{\Re(z)} \leq |E_{\ell}(z)| + \frac{1}{16e^{\ell}}.$$ 
If $|E_{\ell}(z)| \leq \frac{15}{16}$, then we have $e^{\Re(z)} \leq 1$. Otherwise, if $|E_{\ell}(z)| > \frac{15}{16}$, we get that
\begin{equation*}
e^{\Re(z) } \leq |E_{\ell}(z)| \Big( 1+\frac{1}{15 e^{\ell}} \Big),
\end{equation*}
and this completes the proof.
\end{proof}

Let $\nu(n)$ be the multiplicative function given by
$$\nu(p^j) = \frac{1}{j!},$$ for $p$ a prime. We will frequently use the following fact. For any interval $I$, $s \in \mathbb{N}$ and $a(n)$ a completely multiplicative function, we have
\begin{equation}
 \Big(  \sum_{p \in I}  a(p) \Big)^s= s!  \sum_{\substack{ p |n \Rightarrow p \in I \\ \Omega(n) = s}} a(n) \nu(n),
 \label{identity}
 \end{equation} where $\Omega(n)$ denotes the number of prime factors of $n$, counting multiplicity.
\kommentar{\begin{lem}\label{harperresult}
Let $n=\prod_{j=1}^r p_j^{a_j}$. Then
\begin{equation*}
    \int_T^{2T} \prod_{j=1}^r \cos(t \log p_j)^{a_j} \, dt = Tf(n)+{\color{purple}E(n)}
\end{equation*} {\color{purple} with $E(n)=O(n^{1/2})$},  where
$$f(n) = 
\begin{cases}
0 & \mbox{ if } n \neq \square, \\
\prod_{j=1}^r \frac{a_j!}{2^{a_j}((a_j/2)!)^2} & \mbox{ if } n=\square.
\end{cases}$$
{\color{purple}Moreover, the error term $E(n)$ satisfies
	\[
	s!\sum_{\substack{p|n\Rightarrow p\in I\\\Omega(n)=s}}\frac{a(n)\nu(n)}{n^{1/2+\alpha}}E(n)\ll \big(\max_{p\in I}|a(p)|\big)^s
	\]
for any completely multiplicative function $a(n)$.}\end{lem}
\begin{rem}
The first part of the lemma is Proposition $2$ in \cite{harper}, but the error term there is $O(n)$.
\end{rem}
\begin{proof}
Using induction and the trigonometric identity
\[
\cos x\cos y=\frac{\cos(x+y)+\cos(x-y)}{2}
\]
we obtain that
\[
\prod_{j=1}^s\cos(x_j)=\frac{1}{2^s}\sum_{\sigma_j\in\{-1,1\}}\cos\big(\sum_{j=1}^{s}\sigma_jx_j\big).
\]
Hence, if $n=\prod_{j=1}^sq_j$, where $q_j$'s are primes not necessarily distinct, then
\begin{equation}\label{errorforE}
\int_T^{2T}\prod_{j=1}^s\cos(t\log q_j)\, dt=\frac{T}{2^s}\sum_{\substack{\sigma_j\in\{-1,1\}\\\prod q_j^{\sigma_j}=1}}1+O\Bigg(\frac{1}{2^s}\sum_{\substack{\sigma_j\in\{-1,1\}\\\prod q_j^{\sigma_j}\ne1}}\frac{1}{|\log \prod q_j^{\sigma_j}|}\Bigg).
\end{equation}
The main term is $Tf(n)$. For the error, the contribution of the terms with $\prod q_j^{\sigma_j}<1/2$ or $\prod q_j^{\sigma_j}>2$ is $O(1)$. With the remaining terms $1/2\leq \prod q_j^{\sigma_j}\leq2$, we note that
\[
|\log \prod q_j^{\sigma_j}|\gg \frac{1}{\sqrt{\prod q_j}}=\frac{1}{\sqrt{n}},
\]
and the first part of the lemma follows.

{\color{purple}For the second part, we have
\begin{align*}
		s!\sum_{\substack{p|n\Rightarrow p\in I\\\Omega(n)=s}}\frac{a(n)\nu(n)}{n^{1/2+\alpha}}E(n)\ll \frac{(\max_{p\in I}|a(p)|)^s}{2^s}\sum_{\substack{\sigma_j\in\{-1,1\}}}\sum_{\substack{q_1,\ldots,q_s\in I\\\prod q_j^{\sigma_j}\ne1}}\frac{1}{(\prod q_j)^{1/2+\alpha}}\frac{1}{|\log \prod q_j^{\sigma_j}|},
\end{align*}
by \eqref{errorforE}. For each choice of $\sigma_j\in\{-1,1\}$, the contribution of the terms with $\prod q_j^{\sigma_j}<1/2$ or $\prod q_j^{\sigma_j}>2$ is
\[
\ll \frac{(\max_{p\in I}|a(p)|)^s}{2^s}\bigg(\sum_{p\in I}\frac{1}{p^{1/2+\alpha}}\bigg)^s. 
\]
To deal with the terms with $1/2\leq \prod q_j^{\sigma_j}\leq2$, we write $h=|Q^+-Q^-|>0$, where
\[ 
Q^+=\prod_{\sigma_j=1}q_j\qquad Q^-=\prod_{\sigma_j=-1}q_j,
\]
and then
\[
|\log \prod q_j^{\sigma_j}|\gg \frac{h}{\sqrt{Q^+Q^-}}.
\]
It follows that the contribution of those terms is
\[
\ll \frac{(\max_{p\in I}|a(p)|)^s}{2^s}\sum_{\substack{p|Q\Rightarrow p\in I\\\Omega(Q)\leq s/2}}
\]}
\end{proof}}

\section{Setup of the proof and main propositions}
\label{setup}
We will first introduce some notation and state some key lemmas, before proceeding to the proof of Theorems \ref{mainthm1} and \ref{mainthm2}.

Let $$I_0 = (1, T^{\beta_0}],\ I_1 = (T^{\beta_0}, T^{\beta_1}],\ \ldots,\ I_K = (T^{\beta_{K-1}}, T^{\beta_K}]$$ for a sequence $\beta_0 , \ldots, \beta_K$ to be chosen later such that $\beta_{j+1}=r\beta_j$, for some $r>1$.

Also let $\ell_j$ be even parameters which we will also choose later on. Let $s_j$ be integers. For now, we can think of $s_j \beta_j \asymp 1$, and $\sum_{h=0}^K \ell_h \beta_h \ll 1$.

We let $T^{\beta_j} = e^{2 \pi \Delta_j}$ for every $0\leq j \leq K$ and let 
$$P_{u,v}(t)=\sum_{p\in I_u} \frac{b_{\alpha}(p;\Delta_v)}{p^{1/2+\alpha+it}},$$ where
$b_{\alpha}(n;\Delta)$ is a completely multiplicative function in the first variable and
\begin{align*}
b_{\alpha}(p;\Delta) =-a_{\alpha}(p;\Delta)\log p.
\end{align*}

For $p \leq e^{2 \pi \Delta}$, if $\Delta\alpha\gg 1$, we note from equation \eqref{sumj'} that 
\begin{align*}
|b_{\alpha}(p;\Delta) | &\leq1+ \frac{2 \pi \Delta - \log p}{2\pi\Delta} \sum_{j=1}^{\infty} \frac{2\log p }{\log p+2 \pi j \Delta}e^{-2 \pi j \Delta\alpha}+\frac{\log p}{2\pi\Delta}\sum_{j=1}^{\infty}e^{-2 \pi j \Delta\alpha}\\
& \leq   1+\sum_{j=1}^{\infty} e^{-2\pi j\Delta \alpha} = \frac{1}{1-e^{-2\pi \Delta \alpha}}.
\end{align*}
On the other hand, if $\Delta\alpha = o(1)$, then from \eqref{sumj} we get that
\begin{align*}
|b_{\alpha}(p;\Delta)| \leq \frac{2  (2\pi\Delta-\log p)\log p}{(2 \pi \Delta)^2} \sum_{j=1}^{\infty} \frac{e^{-2\pi j \Delta \alpha}}{j }+O(1) \leq \frac{1}{2} \log \frac{1}{\Delta \alpha} +O(1).
\end{align*}
%
Thus we can rewrite the bound for $b_{\alpha}(p;\Delta)$ as a single bound,
\begin{equation}
\label{improved_bd}
|b_{\alpha}(p;\Delta)| \leq b(\Delta) \Big( \log \frac{1}{\Delta \alpha} \Big)^{\gamma(\Delta)},
\end{equation} 
where $\gamma(\Delta)=0$ if $\Delta \alpha \gg 1$ and $\gamma(\Delta)=1$ if $\Delta \alpha=o(1)$, and where
\begin{equation}
\label{bn}
b(\Delta) = 
\begin{cases}
 \frac{1}{1-e^{-2\pi \Delta \alpha}} & \mbox{ if } \Delta \alpha \gg 1,\\
\frac12+\varepsilon & \mbox{ if } \Delta \alpha =o(1).
\end{cases}
\end{equation}

Let 
$$ \mathcal{T}_u = \Big\{ T \leq t \leq 2T : \max_{u \leq v \leq K} |P_{u,v}(t)| \leq \frac{\ell_u}{ke^2}\Big\}.$$ 
Denote the set of $t$ such that $t \in \mathcal{T}_u$ for all $u \leq K$ by $\mathcal{T}'$. For $0 \leq j \leq K-1$, let $\mathcal{S}_j$ denote the subset of $t \in [T, 2T]$ such that $t \in \mathcal{T}_h$ for all $h \leq j$, but $t \notin \mathcal{T}_{j+1}$. 

We will prove the following lemma.
\begin{lem}
\label{lem_initial}
For $t \in [T, 2T]$, we either have 
$$ \max_{0 \leq v \leq K} |P_{0,v}(t)| > \frac{\ell_0}{ke^2},$$ or
$$|\zeta(\tfrac12+\alpha+it)|^{-2k} \leq S_1(t) +S_2(t),$$ where 
\begin{align*}
&S_1(t) =(\log \log T)^k\Big( \frac{1}{1-T^{-\beta_K\alpha}}\Big)^{ \frac{2k}{\beta_K } } \exp \Big( 2k \Big( \log \frac{1}{\Delta_K \alpha} \Big)^{\gamma(\Delta_K)} \Big)  \\
&\  \times  \prod_{h=0}^K \max \Big\{1, |E_{\ell_h}(kP_{h,K}(t))|^2\Big(1+\frac{1}{15e^{\ell_h}}\Big)^2 \Big\} \exp \Big( O \Big(  \frac{\Delta_K^2 e^{\pi \Delta_K}}{1+\Delta_K t} + \frac{\Delta_K \log (1+ \Delta_K \sqrt{t})}{\sqrt{t}}+1\Big)  \Big)
\end{align*}
and
\begin{align*}
S_2(t) &=(\log \log T)^k \sum_{j=0}^{K-1}  \sum_{v=j+1}^K  \Big( \frac{1}{1-T^{-\beta_j\alpha}}\Big)^{ \frac{2k}{\beta_j } } \exp \Big( 2k \Big( \log \frac{1}{\Delta_j \alpha} \Big)^{\gamma(\Delta_j)} \Big)  \\
&\qquad \times \prod_{h=0}^j\max \Big\{1, |E_{\ell_h}(kP_{h,j}(t))|^2\Big(1+\frac{1}{15e^{\ell_h}}\Big)^2 \Big\}  \Big( \frac{ke^2}{\ell_{j+1}} |P_{j+1,v}(t)|\Big)^{2s_{j+1}} \\
&\qquad \times  \exp \Big( O \Big(  \frac{\Delta_j^2 e^{\pi \Delta_j}}{1+\Delta_j t} + \frac{\Delta_j \log (1+ \Delta_j \sqrt{t})}{\sqrt{t}}+1\Big)  \Big).
\end{align*}
\end{lem}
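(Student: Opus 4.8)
The starting point is the lower bound for $\log|\zeta(\tfrac12+\alpha+it)|$ from Lemma \ref{key_ineq}, specialized at $\Delta=\Delta_K$. Exponentiating the inequality (after multiplying by $-2k$) gives
$$
|\zeta(\tfrac12+\alpha+it)|^{-2k}\le \Big(\tfrac{t}{2\pi}\Big)^{-\frac{k}{\pi\Delta_K}\log(1-e^{-2\pi\Delta_K\alpha})}(\log\log T)^k\exp\!\Big(2k\big(\log\tfrac{1}{\Delta_K\alpha}\big)^{\gamma(\Delta_K)}\Big)\exp\!\big(-2k\,\Re P(t)\big)\exp(O(\cdots)),
$$
where $P(t)=\sum_{p\le e^{2\pi\Delta_K}}\frac{(\log p)a_\alpha(p;\Delta_K)}{p^{1/2+\alpha+it}}=-\sum_{v}P_{\cdot,K}(t)$ decomposes over the blocks $I_0,\dots,I_K$ since $e^{2\pi\Delta_K}=T^{\beta_K}$; note $-2k\Re P(t)=2k\sum_{u=0}^K\Re(kP_{u,K}(t))/k$, i.e. the Dirichlet polynomial pieces are exactly $kP_{u,K}(t)$ after absorbing the factor $k$. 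I would also record that the prefactor involving $t$ is, up to the acceptable $O$-term, bounded by $(1-T^{-\beta_K\alpha})^{-2k/\beta_K}$, using $t\asymp T$ and $T^{\beta_K}=e^{2\pi\Delta_K}$.

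The crux is then to bound $\exp\big(2k\sum_{u=0}^K\Re(kP_{u,K}(t))/k\big)=\prod_{u=0}^K\exp\big(2\Re(kP_{u,K}(t))\big)$ by a product of Taylor-polynomial pieces. For this I apply Lemma \ref{elem_ineq} blockwise: for each $u$, if $|kP_{u,K}(t)|\le \ell_u/e^2$ — which holds precisely when $t\in\mathcal T_u$ (recall $\mathcal T_u$ asks $|P_{u,v}(t)|\le\ell_u/(ke^2)$ for all $v\ge u$) — then $\exp(2\Re(kP_{u,K}(t)))\le\max\{1,|E_{\ell_u}(kP_{u,K}(t))|^2(1+\tfrac{1}{15e^{\ell_u}})^2\}$. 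So on the event $t\in\mathcal T'=\bigcap_{u\le K}\mathcal T_u$ we get exactly the bound $|\zeta|^{-2k}\le S_1(t)$.

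The remaining work is the dichotomy when $t\notin\mathcal T'$. If already $\max_{0\le v\le K}|P_{0,v}(t)|>\ell_0/(ke^2)$ we are in the first alternative of the lemma, so assume not; then there is a largest index $j$ with $0\le j\le K-1$ such that $t\in\mathcal T_h$ for all $h\le j$ but $t\notin\mathcal T_{j+1}$, i.e. $t\in\mathcal S_j$, and some $v\in[j+1,K]$ with $|P_{j+1,v}(t)|>\ell_{j+1}/(ke^2)$, equivalently $\big(\tfrac{ke^2}{\ell_{j+1}}|P_{j+1,v}(t)|\big)^{2s_{j+1}}\ge1$. Here I instead apply Lemma \ref{key_ineq} with $\Delta=\Delta_j$ rather than $\Delta_K$: this truncates the Dirichlet polynomial to primes in $I_0\cup\cdots\cup I_j$, so only the blocks $h\le j$ appear, and on each such block $t\in\mathcal T_h$ lets me use Lemma \ref{elem_ineq} to replace $\exp(2\Re(kP_{h,j}(t)))$ by the $\max\{1,|E_{\ell_h}(kP_{h,j}(t))|^2(\cdots)^2\}$ factor, while the free factor $\big(\tfrac{ke^2}{\ell_{j+1}}|P_{j+1,v}(t)|\big)^{2s_{j+1}}\ge1$ is inserted harmlessly. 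Summing the resulting bound over $j$ from $0$ to $K-1$ and over $v$ from $j+1$ to $K$ (which only enlarges the right-hand side) yields $|\zeta|^{-2k}\le S_2(t)$, and combining the two cases gives $|\zeta|^{-2k}\le S_1(t)+S_2(t)$ in all cases outside the first alternative.

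The main obstacle, and the step needing care, is the bookkeeping in the $t\notin\mathcal T'$ case: one must use the \emph{right} value of $\Delta$ (namely $\Delta_j$, tied to the block where membership in $\mathcal T$ first fails) so that the tail primes in blocks $>j$ — about which one has no control — are simply not present in the explicit formula, while simultaneously checking that the error terms $O\big(\Delta_j^2 e^{\pi\Delta_j}/(1+\Delta_j t)+\cdots\big)$ and the $t$-dependent prefactor are correctly expressed with $\Delta_j$ in place of $\Delta_K$. The blockwise hypothesis $|kP_{h,K}(t)|\le\ell_h/e^2$ versus $|kP_{h,j}(t)|\le\ell_h/e^2$ must also be reconciled with the definition of $\mathcal T_h$, which already quantifies over all $v\ge h$; this is exactly why $\mathcal T_u$ was defined with a max over $v$, and pointing that out cleanly is the key to a painless argument.
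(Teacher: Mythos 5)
Your argument is correct and is essentially the same dichotomy the paper uses: apply Lemma \ref{key_ineq} with $\Delta=\Delta_K$ on $\mathcal T'$ and with $\Delta=\Delta_j$ on $\mathcal S_j$, decompose the prime sum into the blocks $I_0,\dots,I_j$ (resp.\ $I_0,\dots,I_K$), use Lemma \ref{elem_ineq} on each block where membership in $\mathcal T_h$ gives the needed bound $|kP_{h,\cdot}(t)|\le \ell_h/e^2$, and for $t\in\mathcal S_j$ multiply in the harmless factor $(\tfrac{ke^2}{\ell_{j+1}}|P_{j+1,v}(t)|)^{2s_{j+1}}\ge1$ before summing over $(j,v)$. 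The paper's proof is just a terser statement of the same steps; your write-up additionally makes explicit the absorption of the $t$-dependent prefactor into $(1-T^{-\beta_K\alpha})^{-2k/\beta_K}\exp(O(1))$ and correctly identifies why $\mathcal T_u$ is defined with a maximum over all $v\ge u$ (so that the same hypothesis applies to $kP_{h,j}(t)$ for any $j\ge h$, not just $j=K$). Two cosmetic slips worth fixing: ``holds precisely when $t\in\mathcal T_u$'' should read ``holds whenever $t\in\mathcal T_u$'' (the implication is one-directional), and the index $j$ with $t\in\mathcal T_h$ for $h\le j$, $t\notin\mathcal T_{j+1}$ is unique rather than ``largest.''
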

\begin{proof}
For $T \leq t \leq 2T$, we have the following possibilities:
\begin{enumerate}
\item $t \notin \mathcal{T}_0$;
\item $t \in \mathcal{T}'$;
\item $t \in \mathcal{S}_j$  for some $0 \leq j \leq K-1.$
\end{enumerate} 

If $t \notin \mathcal{T}_0$, then the first condition is automatically satisfied. Now suppose that $t \in \mathcal{T}_0$. 

Suppose that $t \in \mathcal{T}'$. We use Lemma \ref{key_ineq} with $\Delta=\Delta_K$ and the inequality in Lemma \ref{elem_ineq} with $z = k\Re P_{h,K}(t)$. 

Now if $t \in \mathcal{S}_j$, then we use Lemma \ref{key_ineq} with $\Delta=\Delta_j$, the inequality in Lemma \ref{elem_ineq} with $z = k \Re P_{h,j}(t)$ and the fact that there exists some $v \geq j+1$ such that $\frac{ke^2}{\ell_{j+1}} |P_{j+1,v}(t)|>1$.
\end{proof}

We will also need the following propositions.
\begin{prop}
\label{contrib_0}
For $0 \leq v \leq K$ and $\beta_0 s_0 \leq 1$, we have
$$ \int_T^{2T} |P_{0,v}(t)|^{2s_0} \, dt \ll Ts_0! b(\Delta_0)^{2s_0} \Big(\log \frac{1}{\Delta_0\alpha}\Big)^{2s_0\gamma(\Delta_0)} (\log \log T^{\beta_0})^{s_0}.$$
\end{prop}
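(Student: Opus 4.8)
\textbf{Proof proposal for Proposition \ref{contrib_0}.}

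The plan is to expand the $2s_0$-th power of the Dirichlet polynomial $P_{0,v}(t)$ and apply the mean value theorem of Lemma \ref{harperresultreplace}. First I would write $P_{0,v}(t)=\sum_{p\in I_0}\frac{b_\alpha(p;\Delta_v)}{p^{1/2+\alpha+it}}$, so that
\[
|P_{0,v}(t)|^{2s_0}=\Big(\sum_{p\in I_0}\frac{b_\alpha(p;\Delta_v)}{p^{1/2+\alpha+it}}\Big)^{s_0}\overline{\Big(\sum_{p\in I_0}\frac{b_\alpha(p;\Delta_v)}{p^{1/2+\alpha+it}}\Big)^{s_0}}.
\]
Using the combinatorial identity \eqref{identity} for the completely multiplicative function $a(p)=b_\alpha(p;\Delta_v)p^{-1/2-\alpha-it}$ (and its conjugate), each $s_0$-th power becomes $s_0!$ times a sum over integers $n$ with $\Omega(n)=s_0$ supported on primes in $I_0$, weighted by $\nu(n)$. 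The product $P_{0,v}^{s_0}\overline{P_{0,v}^{s_0}}$ is then a single Dirichlet polynomial $\big|\sum_{\Omega(n)=s_0,\,p|n\Rightarrow p\in I_0}\frac{s_0!\,b_\alpha(n;\Delta_v)\nu(n)}{n^{1/2+\alpha+it}}\big|^2$ (here $b_\alpha(n;\Delta_v)$ is the completely multiplicative extension), of length $N\leq T^{\beta_0 s_0}\leq T$ by the hypothesis $\beta_0 s_0\leq 1$.

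Next I would invoke Lemma \ref{harperresultreplace} with $a(n)=s_0!\,b_\alpha(n;\Delta_v)\nu(n)n^{-\alpha}$ restricted to $\{\Omega(n)=s_0,\ p|n\Rightarrow p\in I_0\}$. Since $N\ll T$, the error term $O(N)$ is absorbed into $O(T)$, giving
\[
\int_T^{2T}|P_{0,v}(t)|^{2s_0}\,dt\ll T\,(s_0!)^2\sum_{\substack{p|n\Rightarrow p\in I_0\\\Omega(n)=s_0}}\frac{b_\alpha(n;\Delta_v)^2\nu(n)^2}{n^{1+2\alpha}}\leq T\,(s_0!)^2\sum_{\substack{p|n\Rightarrow p\in I_0\\\Omega(n)=s_0}}\frac{b_\alpha(n;\Delta_v)^2\nu(n)^2}{n}.
\]
Now I bound $|b_\alpha(p;\Delta_v)|\leq b(\Delta_v)\big(\log\frac{1}{\Delta_v\alpha}\big)^{\gamma(\Delta_v)}$ via \eqref{improved_bd}; by complete multiplicativity $b_\alpha(n;\Delta_v)^2\leq \big(b(\Delta_v)(\log\frac{1}{\Delta_v\alpha})^{\gamma(\Delta_v)}\big)^{2s_0}$ for $n$ with $\Omega(n)=s_0$. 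Pulling this constant out and reversing the identity \eqref{identity} once more, $\sum_{\Omega(n)=s_0}\frac{\nu(n)^2}{n}\cdot s_0!\leq \big(\sum_{p\in I_0}\frac1p\big)^{s_0}$ modulo the extra factorials; more carefully, $(s_0!)^2\sum_{\Omega(n)=s_0,\,p|n\Rightarrow p\in I_0}\frac{\nu(n)^2}{n}\leq s_0!\big(\sum_{p\in I_0}\frac1p\big)^{s_0}$ since $\nu(p^j)^2\leq \nu(p^j)/1$ type bounds and the multinomial coefficients dominate. By Mertens' theorem $\sum_{p\in I_0}\frac1p=\log\log T^{\beta_0}+O(1)\ll\log\log T^{\beta_0}$, so $\big(\sum_{p\in I_0}\frac1p\big)^{s_0}\ll(\log\log T^{\beta_0})^{s_0}$. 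Collecting the factors yields exactly
\[
\int_T^{2T}|P_{0,v}(t)|^{2s_0}\,dt\ll T\,s_0!\,b(\Delta_0)^{2s_0}\Big(\log\frac{1}{\Delta_0\alpha}\Big)^{2s_0\gamma(\Delta_0)}(\log\log T^{\beta_0})^{s_0},
\]
where I have used that $b(\Delta_v)$ and $\gamma(\Delta_v)$ for $v\geq 0$ can be replaced by the values at $\Delta_0$ (since $\Delta_0\leq\Delta_v$ and the relevant quantities are monotone, or one simply notes $b(\Delta_v)(\log\frac1{\Delta_v\alpha})^{\gamma(\Delta_v)}\ll b(\Delta_0)(\log\frac1{\Delta_0\alpha})^{\gamma(\Delta_0)}$).

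The main obstacle I anticipate is the bookkeeping in the passage from $(s_0!)^2\sum_{\Omega(n)=s_0}\nu(n)^2/n$ to $s_0!(\sum_{p\in I_0}1/p)^{s_0}$: one must check that the prime-power terms (where $\nu(p^j)=1/j!$ is small but $n=p^j$ contributes $1/p^j$) are genuinely negligible compared to the squarefree terms, and that the combinatorial factor $s_0!$ rather than $(s_0!)^2$ survives. This is the standard argument (as in Harper \cite{harper} and \cite{sound_maks}) comparing $\sum_{\Omega(n)=s_0}\nu(n)^2/n$ with $\frac{1}{s_0!}(\sum_p 1/p)^{s_0}$, using that $\sum_{j\geq 1}\nu(p^j)^2/p^j\leq 1/p+O(1/p^2)$, so I would cite or briefly reproduce it rather than grind through it. The constraint $\beta_0 s_0\leq 1$ is used only to keep the Dirichlet polynomial length $\leq T$ so that the error term in Lemma \ref{harperresultreplace} is controlled; everything else is soft.
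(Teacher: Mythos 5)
Your proof is correct and follows the same route as the paper: expand via \eqref{identity}, apply Lemma \ref{harperresultreplace} (the length constraint $\beta_0 s_0 \leq 1$ keeps $N\leq T$ so the $O(N)$ error is absorbed), use $\nu(n)^2\leq\nu(n)$ together with \eqref{identity} in reverse to drop one factor of $s_0!$, then apply \eqref{improved_bd}, Mertens, and monotonicity in $v$ to pass from $\Delta_v$ to $\Delta_0$. The step you flag as a potential obstacle is in fact immediate: $\nu(n)^2\le\nu(n)$ combined with \eqref{identity} yields $(s_0!)^2\sum_{\Omega(n)=s_0}\nu(n)a(n)=s_0!\big(\sum_p a(p)\big)^{s_0}$ directly, with no separate analysis of prime-power terms required.
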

\begin{proof}
Using \eqref{identity}, we have
\begin{align*}
  P_{0,v}(t)^{s_0} = s_0! \sum_{\substack{p|n \Rightarrow p \leq T^{\beta_0} \\ \Omega(n) =s_0}} \frac{b_{\alpha}(n;\Delta_v) \nu(n)}{n^{1/2+\alpha+it}}.
\end{align*}
It then follows from Lemma \ref{harperresultreplace} that
\begin{align*}
\int_T^{2T} |P_{0,v}(t)|&^{2s_0} \,  dt= \big(T+O(T^{\beta_0s_0})\big) (s_0!)^2 \sum_{\substack{p|n \Rightarrow p \leq T^{\beta_0} \\ \Omega(n)=s_0}} \frac{b_{\alpha}(n;\Delta_v)^2 \nu(n)^2}{n^{1+2\alpha}}\nonumber\\
&\ll T s_0! \bigg( \sum_{p \leq T^{\beta_0}} \frac{b_{\alpha}(p;\Delta_v)^2}{p^{1+2\alpha}} \bigg)^{s_0} \leq Ts_0! b(\Delta_v)^{2s_0} \Big(\log \frac{1}{\Delta_v\alpha}\Big)^{2s_0\gamma(\Delta_v)} (\log \log T^{\beta_0})^{s_0},
\end{align*}
where we have used the fact that $\nu(n)^2 \leq \nu(n)$ and the bound \eqref{improved_bd}. Now since 
$$b(\Delta_v) \Big(\log \frac{1}{\Delta_v\alpha}\Big)^{2s_0\gamma(\Delta_v)} \leq b(\Delta_0)\Big(\log \frac{1}{\Delta_0\alpha}\Big)^{2s_0\gamma(\Delta_0)},$$ the conclusion follows.
\end{proof}

\begin{prop}
\label{j+1}
Let $0 \leq j \leq K-1$. For $\sum_{h=0}^j \ell_h \beta_h+s_{j+1} \beta_{j+1} \leq 1$ and for $j+1\leq v \leq K$, we have
\begin{align*}
 \int_T^{2T} & \prod_{h=0}^j |E_{\ell_h}(k P_{h,j}(t))|^2 |P_{j+1,v}(t)|^{2s_{j+1}} \, dt \ll T s_{j+1}! \big( \log T^{\beta_j} \big)^{k^2 b(\Delta_j)^2 (\log \frac{1}{\Delta_j \alpha})^{2 \gamma(\Delta_j)}} \\
 & \times   b(\Delta_{j+1})^{2s_{j+1}} \Big( \log \frac{1}{\Delta_{j+1} \alpha} \Big)^{2s_{j+1} \gamma(\Delta_{j+1})} (\log r)^{s_{j+1}}.
 \end{align*}
\end{prop}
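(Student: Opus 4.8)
The plan is to collapse the whole integrand into a single Dirichlet polynomial and estimate it with one application of the mean value theorem, Lemma~\ref{harperresultreplace}, using the disjointness of the prime blocks $I_0,\dots,I_{j+1}$ to evaluate the resulting diagonal sum. Write the integrand as $|D(t)|^2$ with
\[
D(t)=\Big(\prod_{h=0}^{j}E_{\ell_h}\big(kP_{h,j}(t)\big)\Big)\,P_{j+1,v}(t)^{s_{j+1}}.
\]
Expanding via \eqref{identity}, each factor $E_{\ell_h}(kP_{h,j}(t))=\sum_{p\mid n\Rightarrow p\in I_h,\ \Omega(n)\le \ell_h}k^{\Omega(n)}b_\alpha(n;\Delta_j)\nu(n)n^{-1/2-\alpha-it}$ is a Dirichlet polynomial supported on integers $\le T^{\ell_h\beta_h}$ (at most $\ell_h$ prime factors, each $\le T^{\beta_h}$), and $P_{j+1,v}(t)^{s_{j+1}}=s_{j+1}!\sum_{p\mid n\Rightarrow p\in I_{j+1},\ \Omega(n)=s_{j+1}}b_\alpha(n;\Delta_v)\nu(n)n^{-1/2-\alpha-it}$ is supported on integers $\le T^{s_{j+1}\beta_{j+1}}$. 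Hence $D(t)=\sum_{n\le N}a(n)n^{-1/2-it}$ with $N\le T^{\sum_{h=0}^{j}\ell_h\beta_h+s_{j+1}\beta_{j+1}}\le T$ by hypothesis, so Lemma~\ref{harperresultreplace} gives $\int_T^{2T}|D(t)|^2\,dt=(T+O(N))\sum_n|a(n)|^2/n\ll T\sum_n|a(n)|^2/n$.

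Next I would evaluate the diagonal sum. Since $I_0,\dots,I_{j+1}$ are pairwise disjoint, every $n$ occurring factors uniquely as $n=n_0\cdots n_j n_{j+1}$ with $n_h$ composed of primes in $I_h$, and $a(n)$ factors accordingly, so $\sum_n|a(n)|^2/n$ splits as a product over the blocks. For $h\le j$, discarding the constraint $\Omega(n_h)\le\ell_h$, using $\nu^2\le\nu$, and factoring the Euler product (each prime power $p^a$ contributing $\frac1{a!}(k^2b_\alpha(p;\Delta_j)^2p^{-1-2\alpha})^a$) gives $\sum_{n_h}|a_h(n_h)|^2/n_h\le \exp\big(k^2\sum_{p\in I_h}b_\alpha(p;\Delta_j)^2p^{-1-2\alpha}\big)$. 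For the block $I_{j+1}$, keeping $\Omega(n_{j+1})=s_{j+1}$ and applying \eqref{identity} to the completely multiplicative weight $b_\alpha(p;\Delta_v)^2p^{-1-2\alpha}$ gives $\sum_{n_{j+1}}|a_{j+1}(n_{j+1})|^2/n_{j+1}\le s_{j+1}!\big(\sum_{p\in I_{j+1}}b_\alpha(p;\Delta_v)^2p^{-1-2\alpha}\big)^{s_{j+1}}$, the two factors of $s_{j+1}!$ from $(s_{j+1}!)^2$ being reduced to one by the $1/s_{j+1}!$ from \eqref{identity}.

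Finally I would insert the pointwise bound \eqref{improved_bd} (valid since every relevant $p\le T^{\beta_j}=e^{2\pi\Delta_j}$), namely $|b_\alpha(p;\Delta_j)|\le b(\Delta_j)(\log\tfrac1{\Delta_j\alpha})^{\gamma(\Delta_j)}$ and similarly with $\Delta_v$. Multiplying the $h=0,\dots,j$ factors, the prime sums combine to $\sum_{p\le T^{\beta_j}}p^{-1-2\alpha}\le\log\log T^{\beta_j}+O(1)$ by Mertens, so that product is $\ll(\log T^{\beta_j})^{k^2b(\Delta_j)^2(\log\frac1{\Delta_j\alpha})^{2\gamma(\Delta_j)}}$; for the last block $\sum_{p\in I_{j+1}}p^{-1-2\alpha}\le\sum_{p\in I_{j+1}}p^{-1}=\log\log T^{\beta_{j+1}}-\log\log T^{\beta_j}+o(1)=\log r+o(1)\ll\log r$. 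Using the monotonicity $b(\Delta_v)(\log\tfrac1{\Delta_v\alpha})^{\gamma(\Delta_v)}\le b(\Delta_{j+1})(\log\tfrac1{\Delta_{j+1}\alpha})^{\gamma(\Delta_{j+1})}$ for $v\ge j+1$ — exactly the step used at the end of the proof of Proposition~\ref{contrib_0} — to replace $\Delta_v$ by $\Delta_{j+1}$, and collecting everything yields the asserted bound. The only points needing care are (i) that the total length of $D(t)$ is $\le T$, which is precisely what the hypothesis $\sum_{h=0}^{j}\ell_h\beta_h+s_{j+1}\beta_{j+1}\le 1$ guarantees and what makes the $O(N)$ term in Lemma~\ref{harperresultreplace} negligible, and (ii) keeping the $E_{\ell_h}$-blocks (index $\Delta_j$) and the $P_{j+1,v}$-block (index $\Delta_v$) separated so the Euler-product factorization goes through cleanly; neither is a genuine obstacle, and I expect the bookkeeping around the additive constant in Mertens' estimate to be the only thing requiring a word of comment.
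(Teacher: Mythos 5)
Your proof is correct and follows essentially the same route as the paper's: collapse the integrand into a single Dirichlet polynomial using \eqref{identity} and the disjointness of the prime blocks, apply Lemma~\ref{harperresultreplace} (with the hypothesis ensuring the polynomial has length $\leq T$ so the $O(N)$ error is harmless), factor the diagonal sum over blocks, bound the $h\le j$ factors by an Euler product after dropping the $\Omega(n_h)\le\ell_h$ constraint, insert the pointwise bound \eqref{improved_bd}, and use the monotonicity of $b(\Delta)(\log\frac1{\Delta\alpha})^{\gamma(\Delta)}$ to trade $\Delta_v$ for $\Delta_{j+1}$. The only cosmetic difference is that you retain $\nu$ and get $\exp(k^2\sum_{p}\cdots)$ for the $h\le j$ blocks where the paper bounds $\nu(n_h)\le1$ and writes $\prod_p(1-k^2|b_\alpha(p;\Delta_j)|^2 p^{-1-2\alpha})^{-1}$; both give the same exponent up to the implied constant.
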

\begin{proof}
Using \eqref{identity}, we have 
\begin{align}
\label{to_bd}
\int_T^{2T} \prod_{h=0}^j |E_{\ell_h}(k P_{h,j}(t))|^2 |P_{j+1,v}(t)|^{2s_{j+1}} \, dt = (s_{j+1}!)^2 \int_T^{2T} \Big| \sum_{n \leq T^{\sum_{h=0}^j \ell_h \beta_h+s_{j+1}\beta_{j+1}}} \frac{c(n) \nu(n)}{n^{1/2+\alpha+it}} \Big|^2 \, dt,
\end{align}
where
$$c(n) = \sum_{\substack{n = n_1 \ldots n_{j+1} \\ p|n_h \Rightarrow p \in I_h \\ \Omega(n_h) \leq \ell_h, h=0,\ldots,j\\\Omega(n_{j+1})=s_{j+1}}}\Big(\prod_{h=0}^j b_{\alpha}(n_h;\Delta_j)k^{\Omega(n_h)} \Big)b_{\alpha}(n_{j+1};\Delta_v) .$$
Using Lemma \ref{harperresultreplace} in \eqref{to_bd} and the fact that $\nu(m)^2 \leq \nu(m)$ for any $m$, we obtain that
\begin{align*}
\eqref{to_bd}  &\ll \Big(T+T^{\sum_{h=0}^j \ell_h \beta_h+s_{j+1}\beta_{j+1}} \Big) (s_{j+1}!)^2 \Big(\prod_{h=0}^j \sum_{\substack{p|n_h \Rightarrow p \in I_h \\ \Omega(n_h) \leq \ell_h}} \frac{|b_{\alpha}(n_h;\Delta_j)|^2 k^{2 \Omega(n_h)} \nu(n_h)}{n_h^{1+2\alpha}} \Big)\\
&\qquad \times  \sum_{\substack{p|n_{j+1} \Rightarrow p \in I_{j+1} \\ \Omega(n_{j+1})=s_{j+1}}}\frac{|b_{\alpha}(n_{j+1};\Delta_v)|^2  \nu(n_{j+1})}{n_{j+1}^{1+2\alpha}}.
\end{align*}
Now we use the assumption that $\sum_{h=0}^j \ell_h \beta_h +s_{j+1} \beta_{j+1} \leq 1$. Bounding $\nu(n_h) \leq 1$, removing the condition on the number of primes of $n_h$, and using \eqref{improved_bd}, we get that
\begin{align*}
\eqref{to_bd} & \ll T s_{j+1}! \prod_{p \leq T^{\beta_j}} \Big( 1- \frac{k^2 |b_{\alpha}(p;\Delta_{j})|^2}{p^{1+2\alpha}}\Big)^{-1} \Big( \sum_{p \in I_{j+1}} \frac{|b_{\alpha}(p;\Delta_{v}|^2}{p^{1+2\alpha}} \Big)^{s_{j+1}} \\
& \ll T s_{j+1}! \big( \log T^{\beta_j} \big)^{k^2 b(\Delta_j)^2 (\log \frac{1}{\Delta_j \alpha})^{2 \gamma(\Delta_j)}}  b(\Delta_v)^{2s_{j+1}} \Big( \log \frac{1}{\Delta_v \alpha} \Big)^{2s_{j+1} \gamma(\Delta_v)}\Big(\log\frac{\beta_{j+1}}{\beta_j}\Big)^{s_{j+1}} \\
& \ll T s_{j+1}! \big( \log T^{\beta_j} \big)^{k^2 b(\Delta_j)^2 (\log \frac{1}{\Delta_j \alpha})^{2 \gamma(\Delta_j)}}  b(\Delta_{j+1})^{2s_{j+1}} \Big( \log \frac{1}{\Delta_{j+1} \alpha} \Big)^{2s_{j+1}  \gamma(\Delta_{j+1})} (\log r)^{s_{j+1}},
\end{align*}
which finishes the proof of the proposition.
\end{proof}
A minor modification of the proposition above (where we do not have the contribution from the $j+1$ interval) yields the following.

\begin{prop}
\label{K_lemma}
For $\sum_{h=0}^K \ell_h \beta_h \leq 1$, we have
\begin{align*}
 \int_T^{2T} & \prod_{h=0}^K |E_{\ell_h}(k P_{h,K}(t))|^2  \, dt \ll T\big( \log T^{\beta_K} \big)^{k^2 b(\Delta_K)^2 (\log \frac{1}{\Delta_K \alpha})^{2 \gamma(\Delta_K)}} .
 \end{align*}
\end{prop}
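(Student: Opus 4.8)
The plan is to follow the template of Proposition \ref{j+1} verbatim, simply dropping every factor associated to the ``extra'' interval $I_{j+1}$. So first I would apply the identity \eqref{identity} to each factor $E_{\ell_h}(kP_{h,K}(t))$: writing out the polynomial $E_{\ell_h}$ and expanding each power $P_{h,K}(t)^{\Omega(n_h)}$ via \eqref{identity}, the product $\prod_{h=0}^K E_{\ell_h}(kP_{h,K}(t))$ becomes a single Dirichlet polynomial $\sum_{n} \tfrac{c(n)\nu(n)}{n^{1/2+\alpha+it}}$ supported on $n \leq T^{\sum_{h=0}^K \ell_h\beta_h}$, where
$$c(n) = \sum_{\substack{n = n_0\cdots n_K \\ p|n_h \Rightarrow p \in I_h \\ \Omega(n_h)\leq \ell_h}} \prod_{h=0}^K b_\alpha(n_h;\Delta_K)\,k^{\Omega(n_h)}.$$

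Next I would invoke the mean value theorem, Lemma \ref{harperresultreplace}, with $N = T^{\sum_{h=0}^K \ell_h\beta_h}$. The hypothesis $\sum_{h=0}^K \ell_h\beta_h \leq 1$ guarantees $N \leq T$, so the error term $O(N)$ is absorbed and we are left with $T$ times the diagonal sum $\sum_n \tfrac{|c(n)|^2\nu(n)^2}{n^{1+2\alpha}}$. Using $\nu(n)^2 \leq \nu(n)$ and the fact that the intervals $I_h$ are disjoint, this sum factors over $h$, and after bounding $\nu(n_h)\leq 1$ and removing the constraint $\Omega(n_h)\leq \ell_h$ it is at most
$$\prod_{h=0}^K \prod_{p \in I_h}\Big(1 - \frac{k^2|b_\alpha(p;\Delta_K)|^2}{p^{1+2\alpha}}\Big)^{-1} \leq \prod_{p \leq T^{\beta_K}}\Big(1 - \frac{k^2|b_\alpha(p;\Delta_K)|^2}{p^{1+2\alpha}}\Big)^{-1}.$$
Finally I would estimate this Euler product: taking logarithms, it is $\exp\big(\sum_{p\leq T^{\beta_K}} \tfrac{k^2|b_\alpha(p;\Delta_K)|^2}{p^{1+2\alpha}} + O(1)\big)$, and inserting the pointwise bound \eqref{improved_bd}, namely $|b_\alpha(p;\Delta_K)| \leq b(\Delta_K)(\log\tfrac1{\Delta_K\alpha})^{\gamma(\Delta_K)}$, together with Mertens' theorem $\sum_{p\leq x}\tfrac1p = \log\log x + O(1)$ (and the harmless effect of the $p^{-2\alpha}$ in the exponent, since here the shift range makes $p^{2\alpha}$ close to $1$ on the relevant primes, or one simply bounds $p^{-2\alpha}\leq 1$), yields the Euler product bound $\big(\log T^{\beta_K}\big)^{k^2 b(\Delta_K)^2(\log\frac1{\Delta_K\alpha})^{2\gamma(\Delta_K)}}$. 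Combining with the factor $T$ gives the claimed estimate.

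There is no real obstacle here — this proposition is strictly easier than Proposition \ref{j+1}, since the combinatorially delicate bookkeeping of the $I_{j+1}$-factor (the $s_{j+1}!$, the $(\log r)^{s_{j+1}}$ term, the switch from $\Delta_v$ to $\Delta_{j+1}$) is entirely absent. The only point requiring minor care is making sure the diagonal-sum manipulation — dropping $\nu(n_h)\leq 1$ and the truncation $\Omega(n_h)\leq \ell_h$ — is valid, which it is because all terms are nonnegative; and checking that the $\nu(n)^2 \le \nu(n)$ step is legitimate for the full multiplicative structure, which it is since $\nu(p^j) = 1/j! \le 1$. One should also note that the error term in Lemma \ref{harperresultreplace} contributes $O\big(N \sum_n \tfrac{|c(n)|^2\nu(n)^2}{n}\big)$, which under $N\leq T$ is dominated by the main term. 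This is exactly the ``minor modification'' alluded to in the statement, so I would present it tersely, referring back to the proof of Proposition \ref{j+1} for the shared steps.
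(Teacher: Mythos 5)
Your proof is correct and follows exactly the route the paper intends: the authors state Proposition \ref{K_lemma} is ``a minor modification'' of Proposition \ref{j+1} obtained by dropping the $I_{j+1}$ factor, and your argument is precisely that modification carried out in full (expand via \eqref{identity}, apply Lemma \ref{harperresultreplace}, use $\nu(n)^2\le\nu(n)$ and the nonnegativity to drop the $\Omega(n_h)\le\ell_h$ truncation, then bound the Euler product via \eqref{improved_bd} and Mertens). The only cosmetic slip is that the error term from Lemma \ref{harperresultreplace} should read $O\big(N\sum_n |c(n)|^2\nu(n)^2/n^{1+2\alpha}\big)$ rather than with $n$ alone in the denominator, but this does not affect the argument.
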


\kommentar{\begin{lem}
\label{squares}
The estimates in Propositions \ref{j+1} and \ref{K_lemma} still hold when introducing the contribution from the primes square.
\end{lem}}

\section{The case $\alpha \gg \frac{1}{(\log T)^{\frac{1}{2k}-\varepsilon}}$, first steps}
\label{proof}
Here, we will consider the case $\alpha \gg \frac{1}{(\log T)^{\frac{1}{2k}-\varepsilon}}$, which will be a starting point in the proof of Theorems \ref{mainthm1} and \ref{mainthm2}.


We choose
\begin{equation}
\beta_0 = \frac{a(2d-1) ( \log \log T)^2 }{(1+2\varepsilon)k ( \log T)\big( \log \frac{1}{1-(\log T)^{-2\alpha}} \big)}, \quad s_0 = \Big[\frac{a}{\beta_0}\Big], \quad \ell_0 = 2 \Big\lceil \frac{s_0^d}{2}\Big \rceil \label{beta_0}
\end{equation} 
and
\begin{equation}
\label{beta_j}
\beta_j = r^j \beta_0 , \quad s_j = \Big[\frac{a}{\beta_j}\Big], \quad \ell_j =2\Big \lceil \frac{s_j^d}{2}\Big\rceil, \quad 1 \leq j \leq K,
\end{equation}
where we can pick, for example,
$$ a= \frac{4-3k\varepsilon}{2(2-k\varepsilon)}, \, r = \frac{2}{2-k\varepsilon} , \,
d = \frac{8-7k\varepsilon}{2(4-3k\varepsilon)},$$ 
so that
\begin{equation}
\frac{a(2d-1)}{r} = 1-k\varepsilon.
\label{adr}
\end{equation}
Here, $K$ is chosen such that it is the maximal integer for which
\begin{equation} 
\beta_K \leq 
\begin{cases}
  \frac{\log(\alpha \log T)}{\alpha \log T} & \mbox{ if } \alpha \log T \to \infty, \\ 
  c & \mbox{ if } \alpha \ll \frac{1}{\log T},
  \end{cases}
\label{beta_k}
\end{equation}
where $c>0$ is a small constant such that
 \begin{equation} c^{1-d} \Big( \frac{a^d r^{1-d}}{r^{1-d}-1}+\frac{2r}{r-1} \Big) \leq 1-a.
 \label{condition_c2}
 \end{equation} Note that the above ensures that the conditions in Propositions \ref{j+1} and \ref{K_lemma} are satisfied. 
\kommentar{\acom{Some thoughts about the size of $\beta_K$: 
In terms of size, we need roughly 
\begin{equation} 
\beta_K^{1-d} \asymp (1-a) (r^{1-d}-1).\label{bksize}
\end{equation}
Let
$$ \frac{a(2d-1)}{r} = 1- \frac{n \log_3 T}{\log_2 T}.$$
We then have
$$a >1-  \frac{n \log_3 T}{\log_2 T},$$ so
$$1-a<  \frac{n \log_3 T}{\log_2 T}.$$
We also have
$$d > 1-   \frac{n \log_3 T}{2\log_2 T}$$
so
$$1-d<  \frac{n \log_3 T}{2\log_2 T}.$$ Also
$$r< \frac{1}{1-  \frac{n \log_3 T}{2\log_2 T}},$$ so 
$$r-1< \frac{  \frac{n \log_3 T}{2\log_2 T}}{1- \frac{n \log_3 T}{2\log_2 T}}.$$ Then
$$r^{1-d}-1 = \exp ( (1-d) \log(1+ r-1)) -1 \leq \exp  \Big( \frac{
(nx)^2}{4(1-nx/2)} \Big)-1 \sim \frac{(nx)^2}{4(1-nx/2)},$$ where
$x = \log_3 T/ \log_2T$.
From \eqref{bksize}, we have
$$ (1-d) \log \beta_K \sim \log(1-a)+\log(r^{1-d}-1).$$
Using all of the above, we have 
$$ nx/2 \log \beta_K \sim \log x + 2 \log x - \log (1-nx/2) \sim  nx/2+3 \log x,$$
so 
$$\beta_K \leq \frac{ \exp  \Big(\frac{6 \log_4 T \log_2 T}{n\log_3 T} \Big)}{(\log T)^{6/n}} .$$
  }}
  
If $t \notin \mathcal{T}_0$, then there exists $0 \leq v \leq K$ such that $\frac{ke^2}{\ell_0}|P_{0,v}(t)|>1$. Then 
	we have
	\begin{align*}
		\int_{[T,2T] \setminus \mathcal{T}_0} | \zeta(\tfrac12+\alpha+it)|^{-2k} \, dt& \leq \int_T^{2T} \Big( \frac{ke^2}{\ell_0} |P_{0,v}(t)| \Big)^{2s_0} | \zeta(\tfrac12+\alpha+it)|^{-2k}   \, dt \nonumber \\
		& \leq \Big( \frac{ke^2}{\ell_0} \Big)^{2s_0}\Big( \frac{1}{1-(\log T)^{-2\alpha}} \Big)^{ \frac{(1+\varepsilon) k\log T}{\log \log T}}\int_T^{2T} |P_{0,v}(t)|^{2s_0} \, dt,
	\end{align*}
by using the pointwise bound in Lemma \ref{CClemma},
	\begin{equation}\label{CCbound}
|\zeta(\tfrac12+\alpha+it)|^{-1} \ll \Big( \frac{1}{1-(\log T)^{-2\alpha}} \Big)^{ \frac{(1+\varepsilon) \log T}{2 \log \log T}}.
\end{equation}
By Proposition \ref{contrib_0} and Stirling's formula, we get that
\begin{align}
	&\int_{[T,2T] \setminus \mathcal{T}_0} |\zeta(\tfrac12+\alpha+it)|^{-2k} \, dt \ll   T \Big( \frac{1}{1-(\log T)^{-2\alpha}} \Big)^{ \frac{(1+\varepsilon)k \log T}{ \log \log T}} \nonumber \\
	& \qquad\times  \sqrt{s_0}\exp\bigg(-(2d-1)s_0 \log s_0+2s_0 \log \Big(ke^{3/2} b(\Delta_0) \Big(\log \frac{1}{\Delta_0 \alpha}\Big)^{\gamma(\Delta_0)} \sqrt{\log  \log T^{\beta_0}}  \Big) \bigg)\nonumber\\
&\quad=o(T),\label{0}
\end{align}
using the choice of $s_0$ in \eqref{beta_0}.

Now assume that $t \in \mathcal{T}_0$. Using Lemma \ref{lem_initial}, we have that 
$$ \int_{\mathcal{T}_0} |\zeta(\tfrac12+\alpha+it)|^{-2k} \, dt \leq \int_T^{2T} S_1(t) \, dt + \int_T^{2T} S_2(t) \, dt.$$
With the choice \eqref{beta_k} of $\beta_K$, we have that
$$ \Big(\frac{1}{1-T^{-\beta_K \alpha}} \Big)^{\frac{2k}{\beta_K }}\ll \\
\begin{cases}
1 & \mbox{ if }  \alpha \gg  \frac{1}{\log T},\\
(\log T)^{O(1)} & \mbox{ if } \alpha = o \big(  \frac{1}{\log T} \big)
\end{cases}$$  and 
$$ \exp \Big( O \Big(  \frac{\Delta_K^2 e^{\pi \Delta_K}}{1+\Delta_K t} + \frac{\Delta_K \log (1+ \Delta_K \sqrt{t})}{\sqrt{t}}+1\Big)  \Big) = O(1),$$ so
\begin{align*}
\int_T^{2T}  S_1(t) \, dt& \ll  (\log \log T)^k  \exp \Big( 2k \Big( \log \frac{1}{\Delta_K \alpha} \Big)^{\gamma(\Delta_K)} \Big) ( \log T)^{c_1 \gamma(\Delta_K)}\\
&\qquad \times  \int_T^{2T} \prod_{h=0}^K \max \Big\{1, |E_{\ell_h}(kP_{h,K}(t))|^2\Big(1+\frac{1}{15e^{\ell_h}}\Big)^2 \Big\}  \, dt.
\end{align*}

Note that in the inequality above, we can assume without loss of generality that 

$$\max \Big\{1, |E_{\ell_h}(kP_{h,K}(t))|^2\Big(1+\frac{1}{15e^{\ell_h}}\Big)^2 \Big\}=  |E_{\ell_h}(kP_{h,K}(t))|^2\Big(1+\frac{1}{15e^{\ell_h}}\Big)^2.$$  Using Proposition \ref{K_lemma} and the observation that $\gamma(\Delta_K)=0$ in the first two cases, and $\gamma(\Delta_K)=1$ in the third case, we get that
\begin{equation}
\int_T^{2T}  S_1(t) \, dt \ll
\begin{cases}
 T (\log \log T)^k \big( \frac{\log (\alpha \log T)}{\alpha} \big)^{k^2} & \mbox{ if } \alpha \log T \to \infty, \\ 
 T (\log \log T)^k (\log T)^{k^2 (\frac{1}{1-T^{-c\alpha}})^2} & \mbox{ if } \alpha \asymp \frac{1}{\log T},\\
 T (\log \log T)^k \exp \Big( k^2\big( \frac{1}{4}+\varepsilon \big) (\log \log T) \big( \log \frac{1}{\alpha \log T} \big)^2 \Big)&\mbox{ if } \alpha = o \big( \frac{1}{\log T} \big).
\end{cases}
\label{s1}
\end{equation}


Now we will need to bound the contribution from $S_2(t)$. Using Proposition \ref{j+1} and Stirling's formula, we have
\begin{align*}
 \int_T^{2T}  S_2(t) \, dt&  \ll T (\log \log T)^k   \sum_{j=0}^{K-1} (K-j) \sqrt{s_{j+1}} \exp \bigg( \frac{2k}{\beta_j } \log \frac{1}{1-T^{-\beta_j \alpha}}-(2d-1) s_{j+1}\log s_{j+1} \\
&\qquad\qquad+ 2s_{j+1} \log \Big(ke^{3/2} b(\Delta_{j+1})\Big (\log \frac{1}{\Delta_{j+1} \alpha}\Big)^{\gamma(\Delta_{j+1})} \Big)+2k  \Big( \log \frac{1}{\Delta_j \alpha} \Big)^{\gamma(\Delta_j)} \bigg)\\
&\qquad\times  \big( \log T^{\beta_j} \big)^{k^2 b(\Delta_j)^2 (\log \frac{1}{\Delta_j \alpha})^{2 \gamma(\Delta_j)}}.
\end{align*}
In the equation above, note that if $\beta_j \alpha\log T \geq \varepsilon$, then 
$$\frac{2k}{\beta_j} \log \frac{1}{1-T^{-\beta_j\alpha}} \leq  \frac{2k}{\beta_j} \log \frac{1}{1-e^{-\varepsilon}}<(2d-1)  s_{j+1} \log s_{j+1},$$ so the contribution in this case will be $o(T(\log \log T)^k)$.
Now if $\beta_j\alpha \log T < \varepsilon$, then 
\[
\log  \frac{1}{1-T^{-\beta_j\alpha}}<\log\frac{1}{\alpha}-\log(\beta_j\log T)+\log\frac{1}{1-\varepsilon/2},
\]
so we obtain that
\begin{align}
&\int_T^{2T}  S_2(t) \, dt  \ll T (\log \log T)^k \sum_{j=0}^{K-1} (K-j) \sqrt{\frac{1}{\beta_{j+1}}} \exp \bigg( \frac{ \log \log T}{\beta_j} \Big( 2k \frac{ \log\frac{1}{\alpha}}{\log \log T}-\frac{a(2d-1)}{r}  \Big) \nonumber \\
&\ \ + \frac{\log(\beta_j \log T)}{\beta_j} \Big( \frac{a(2d-1)}{r}-2k \Big)+ \frac{2a}{r \beta_j} \log \Big(ke^{3/2} b(\Delta_{j+1}) \Big(\log \frac{1}{\Delta_{j+1} \alpha}\Big)^{\gamma(\Delta_{j+1})}\Big)  \nonumber\\\nonumber \\
&\  \ +\frac{2k}{\beta_j} \log \frac{1}{1-\varepsilon/2} +\frac{a(2d-1)}{r \beta_j}\log\frac{r}{a} + 2k \Big( \log \frac{1}{\Delta_j \alpha} \Big)^{\gamma(\Delta_j} \bigg) \big( \log T^{\beta_j} \big)^{k^2 b(\Delta_j)^2 (\log \frac{1}{\Delta_j \alpha})^{2 \gamma(\Delta_j)}}.\label{s2}
\end{align}

\section{Proof of Theorems \ref{mainthm1} and \ref{mainthm2} for ``big'' shifts $\alpha$}
\label{bigshift}
Here, we will prove the bound \eqref{1} in Theorem \ref{mainthm1} when $\alpha \gg \frac{1}{(\log T)^{\frac{1}{2k}-\varepsilon}}$, and the bounds \eqref{4}, \eqref{5'}, \eqref{6} in Theorem \ref{mainthm2}.
Recall that
$$ u = \frac{ \log\frac{1}{\alpha}}{\log \log T}.$$
The contribution from $t \notin \mathcal{T}_0$ and from $S_1(t)$ has already been bounded in Section \ref{proof} (see equations \eqref{0} and \eqref{s1}). Now we focus on bounding the contribution from $S_2(t)$.

First assume that $\alpha \gg \frac{1}{(\log T)^{\frac{1}{2k}-\varepsilon}}$ and $k \geq 1/2$.  
We rewrite \eqref{s2} as
\begin{align*}
\int_T^{2T}  S_2(t) \, dt & \ll T (\log \log T)^k \sum_{j=0}^{K-1} (K-j) \sqrt{\frac{1}{\beta_{j+1}}} \exp \bigg( \frac{ \log \log T}{\beta_j} \Big(2ku-\frac{a(2d-1)}{r}  \Big) \nonumber \\
&\qquad  + \frac{\log(\beta_j \log T)}{\beta_j} \Big( \frac{a(2d-1)}{r}-2k \Big)+ 2k \Big( \log \frac{1}{\Delta_j \alpha} \Big)^{\gamma(\Delta_j)} + O \Big( \frac{ \log \log \log T}{\beta_j} \Big) \bigg) \\
&\quad \times \big( \log T^{\beta_j} \big)^{k^2 b(\Delta_j)^2 (\log \frac{1}{\Delta_j \alpha})^{2 \gamma(\Delta_j)}}.
\end{align*}
Note that 
$$  \frac{a(2d-1)}{r}-2k < 0,$$ and 
using \eqref{adr} and the fact that $\alpha \gg \frac{1}{(\log T)^{\frac{1}{2k}-\varepsilon}}$, it follows that
$$ 2k u-\frac{a(2d-1)}{r}  \leq -k \varepsilon.$$
Hence we get that 
\begin{align}
\int_T^{2T}  S_2(t) \, dt & \ll T (\log \log T)^k \sum_{j=0}^{K-1} (K-j) \sqrt{\frac{1}{\beta_{j+1}}} \exp \bigg(- \frac{ k \varepsilon \log \log T}{\beta_j}+ 2k \Big( \log \frac{1}{\Delta_j \alpha} \Big)^{\gamma(\Delta_j)}\nonumber   \\
&\qquad\qquad + O \Big( \frac{ \log \log \log T}{\beta_j} \Big)  \bigg) \big( \log T^{\beta_j} \big)^{k^2 b(\Delta_j)^2 (\log \frac{1}{\Delta_j \alpha})^{2 \gamma(\Delta_j)}}. \label{s2'}
\end{align}

We first consider the contribution from those $j$ for which $\gamma(\Delta_j)=1$. Let $R_1$ denote this contribution. Using the fact that $K \ll \log \log T$ and after a relabeling of the $\varepsilon$, we have that
\begin{align*}
R_1   &\ll   T (\log \log T)^k \sum_j  \exp \bigg(- \frac{ k \varepsilon \log \log T}{\beta_j} +2k \log \frac{1}{\beta_j \alpha \log T}+ O \Big( \frac{ \log \log \log T}{\beta_j}\Big) \\
&\qquad\qquad+ \Big( \frac{1}{4}+\varepsilon \Big) k^2\Big(  \log\frac{1}{\beta_j \alpha \log T} \Big)^2 \log (\beta_j \log T)   \bigg),
\end{align*}  where the sum over $j$ is such that $\gamma(\Delta_j)=1$. 
Since $\alpha \gg \frac{1}{\log T}$, we have
$$  \Big(  \log\frac{1}{\beta_j \alpha \log T} \Big)^2 \log (\beta_j \log T) \ll \Big( \log \frac{1}{\beta_j} \Big)^2 \log \log T.$$
As $\gamma(\Delta_j)=1$, we have $\beta_j \to 0$ in the sum in $R_1$ above, and then it follows that 
\begin{equation}
R_1 = o \big( T(\log \log T)^k\big).
\label{r1}
\end{equation}

Now we consider the contribution in \eqref{s2'} from those $j$ with $\gamma(\Delta_j)=0$. Let $R_2$ denote that term. We have that
\begin{align*}
R_2 \ll T (\log \log T)^k \sum_j  \exp \bigg(- \frac{ k \varepsilon \log \log T}{\beta_j} + O \Big( \frac{ \log \log \log T}{\beta_j} \Big) + k^2 b(\Delta_j)^2 \log ( \beta_j \log T) \bigg),
\end{align*}
where the sum is over $j$ such that $\gamma(\Delta_j)=0$. Keeping in mind the choices for $\beta_j$ (equations \eqref{beta_j} and \eqref{beta_k}), it then follows that
\begin{equation}\label{r2}
R_2 =
\begin{cases}
o ( T (\log \log T)^k ) & \mbox{ if } \alpha \log T \to \infty, \\ 
o\Big( T (\log \log T)^k (\log T)^{k^2 (\frac{1}{1-T^{-c\alpha}})^2}\Big) & \mbox{ if } \alpha \asymp \frac{1}{\log T}.
\end{cases}
\end{equation}
Combining the bounds \eqref{0}, \eqref{s1}, \eqref{r1} and \eqref{r2}, the bound \eqref{1} follows when $\alpha \gg \frac{1}{(\log T)^{\frac{1}{2k}-\varepsilon}}$ and $k \geq 1/2$.

Now assume that $k<1/2$ and $\alpha \gg \frac{1}{\log T}$. If $\frac{a(2d-1)}{r} -2k \leq 0$, then the same argument as before works. Hence we assume we have $\frac{a(2d-1)}{r} -2k >0$. 
We rewrite the bound \eqref{s2} for $S_2(t)$ as 
\begin{align}
&\int_T^{2T}  S_2(t) \, dt  \ll T (\log \log T)^k \sum_{j=0}^{K-1} (K-j) \sqrt{\frac{1}{\beta_{j+1}}} \exp \bigg( \frac{ \log \log T}{\beta_j} \Big( - \frac{ 2k \log (\alpha \log T)}{\beta_j} \nonumber \\
&\qquad + \frac{\log \beta_j}{\beta_j} \Big( \frac{a(2d-1)}{r}-2k \Big)+ \frac{2}{ \beta_j} \Big( \log  \log \frac{1}{\Delta_{j+1} \alpha}\Big)^{\gamma(\Delta_{j+1})}+ 2k \Big( \log \frac{1}{\Delta_j \alpha} \Big)^{\gamma(\Delta_j)} \nonumber \\
&\qquad + k^2 b(\Delta_j)^2 \Big(\log \frac{1}{\Delta_j \alpha}\Big)^{2 \gamma(\Delta_j)} \log (\beta_j \log T) +O \Big( \frac{1}{\beta_j} \Big) \bigg).\label{s22}
\end{align}

In the equation above, we first consider the contribution from those $j$ for which $\gamma(\Delta_j)=1$, i.e., those $j$ for which $\beta_j = o (\frac{1}{\alpha \log T})$. As before, we denote this contribution by $R_1$.
We let
\begin{align*}
f(x)= - &\frac{ 2k \log (\alpha \log T)}{x} + \frac{\log x }{x} \Big( \frac{a(2d-1)}{r}-2k \Big)+ \frac{2}{x} \log  \log \frac{1}{x \alpha \log T}+ 2k  \log \frac{1}{x \alpha \log T}  \\
&+ k^2 \Big( \frac{1}{4}+\varepsilon\Big) \Big(\log \frac{1}{x \alpha \log T}\Big)^2 \log (x \log T) .
\end{align*}
By taking the derivative, we see that when $\frac{1}{\log T} \ll \alpha = o \big( \frac{\log \log T}{\log T} \big)$, the maximum of $f(x)$ (when $x = o (\frac{1}{\alpha \log T})$) is attained at some $x_0 \asymp \frac{1}{\log \log T}$, while if $\alpha \gg \frac{ \log \log T}{\log T}$, the function $f(x)$ is increasing on the interval under consideration. Hence, we get that 
 \begin{equation} R_1 =
 \label{r1_again}
 \begin{cases}
o(T (\log \log T)^k) & \mbox{ if } \alpha \gg \frac{ \log \log T}{\log T}, \\
O \Big(T \exp \Big(C_1 (\log \log T) \big( \log \frac{ \log \log T}{\alpha \log T} \big)^2 \Big) \Big)& \mbox{ if } \frac{1}{\log T} \ll \alpha = o \big(\frac{\log \log T}{\log T} \big),
\end{cases}
 \end{equation}
 for some $C_1>0$. 

Now we bound the contribution in \eqref{s22} from those $j$ for which $\gamma(\Delta_j)=0$. It is easy to see that in this case, the function in \eqref{s22} is decreasing in $j$, so
 $$R_2 = o( T(\log \log T)^k).$$
 Combining the above with \eqref{0}, \eqref{s1} and \eqref{r1_again}, the bounds \eqref{4} and \eqref{5'} follow.
 
Now we assume that $\frac{1}{(\log T)^{\frac{1}{2k}-\varepsilon}} \ll \alpha = o \big( \frac{1}{\log T}\big)$. We rewrite the bound \eqref{s2} for $S_2(t)$ as 
\begin{align*}
&\int_T^{2T}  S_2(t) \, dt  \ll T (\log \log T)^k \sum_{j=0}^{K-1} (K-j) \sqrt{\frac{1}{\beta_{j+1}}} \exp \bigg( - \frac{ 2k \log (\alpha \log T)}{\beta_j} \nonumber \\
&\ \ + \frac{\log \beta_j}{\beta_j} \Big( \frac{a(2d-1)}{r}-2k \Big)+ \frac{2}{ \beta_j}  \log  \log \frac{1}{\Delta_{j+1} \alpha}+ 2k \log \frac{1}{\Delta_j \alpha}  + O \Big(  (\log \log T)^3 + \frac{1}{\beta_j} \Big) \bigg).
\end{align*}
In the sum over $j$ above, the maximum is attained at $j_0$ such that $\beta_{j_0} \asymp (\alpha \log T)^{\frac{2k}{\frac{a(2d-1)}{r}-2k}}$. It then follows that
\begin{equation*}
\int_T^{2T} S_2(t) \, dt \ll T \exp \Big( C_2 \Big( \frac{1}{\alpha \log T} \Big)^{\frac{2k}{1 -2k-k\varepsilon}} \Big),
\end{equation*}
for some $C_2>0$ (and after a relabeling of the $\varepsilon$).
Combining the above and \eqref{0}, \eqref{s1}, the bound \eqref{6} follows when $\frac{1}{(\log T)^{\frac{1}{2k}-\varepsilon}} \ll \alpha = o ( \frac{1}{\log T} )$.

\kommentar{Now since $\alpha \gg \frac{(\log \log T)^{\frac1k+\varepsilon}}{(\log T)^{\frac{1}{2k}}}$, we have that 
$$ \frac{2k \log\frac{1}{\alpha}}{\log \log T} \leq 1- \frac{(2+2k\varepsilon) \log \log \log T}{\log \log T}+ \frac{2k \log C}{\log \log T},$$ for some constant $C$. Now given \eqref{adr}, 
we get that
\begin{align}
\int_T^{2T} &S_2(t) \ll T(\log \log T)^k  \sum_{j=0}^{K-1} (K-j) \sqrt{\frac{1}{\beta_{j+1}}} \exp \bigg( - \frac{(2+k\varepsilon) \log\log\log T}{\beta_j}+\frac{2k \log C}{\beta_j} \nonumber \\
& + \frac{\log(\beta_j \log T)}{\beta_j} \Big( \frac{a(2d-1)}{r}-2k \Big)+ \frac{2a}{r \beta_j} \log \Big(ke^{3/2} b(\Delta_{j+1}) \Big(\log \frac{1}{\Delta_{j+1} \alpha}\Big)^{\gamma(\Delta_{j+1})}\Big)\nonumber  \\
&+\frac{2k}{\beta_j} \log \frac{1}{1-\varepsilon/2}+\frac{a(2d-1)}{r \beta_j} \Big(\log\frac{r}{a}\Big)  + 2k \Big( \log \frac{1}{\Delta_j \alpha} \Big)^{\gamma(\Delta_j} \nonumber \\
&+k^2 b(\Delta_j)^2\Big (\log \frac{1}{\Delta_j \alpha}\Big)^{2 \gamma(\Delta_j)} \log( \beta_j \log T) \bigg) . \label{tobd_s2}
\end{align}
If $\gamma(\Delta_j)=0$ in the sum above, then note that 
\begin{align*}
 & \sum_j (K-j) \sqrt{\frac{1}{\beta_{j+1}}} \exp \bigg( - \frac{(2+k\varepsilon) \log\log\log T}{\beta_j}+\frac{2k \log C}{\beta_j}+ \frac{\log(\beta_j \log T)}{\beta_j} \Big( \frac{a(2d-1)}{r}-2k \Big)\\
 &+ \frac{2a}{r \beta_j} \log \Big(ke^{3/2} b(\Delta_{j+1}) \Big)  +\frac{2k}{\beta_j} \log \frac{1}{1-\varepsilon/2}+\frac{a(2d-1)}{r \beta_j} \Big(\log\frac{r}{a}\Big)  + 2k +k^2 b(\Delta_j)^2 \log( \beta_j \log T) \bigg) \\
 & \quad \ll  \exp \Big(-\frac{C_1 \log \log \log T}{\beta_j} + k^2 b(\Delta_j)^2 \log( \beta_j \log T) \Big),
\end{align*}
for some $C_1>0$, where the sum over $j$ is over those $j$ such that $\gamma(\Delta_j)=0$. Hence the contribution to the integral in \eqref{tobd_s2} from those $j$ such that $\gamma(\Delta_j)=0$ is 
\begin{equation}
\label{j0}
\begin{cases}
o\Big( T (\log \log T)^k \Big( \frac{\log (\alpha \log T+1)}{\alpha} \Big)^{k^2} \Big)& \mbox{ if } \alpha \log T \to \infty \\ 
o\Big( T (\log \log T)^k (\log T)^{k^2 (1+\frac{1}{T^{c\alpha}-1})^2} \Big)& \mbox{ if } \alpha \asymp \frac{1}{\log T}.
\end{cases}
\end{equation} 

If $\gamma(\Delta_j)=1$, then we write $\beta_j = \frac{2 \pi}{h_j \alpha \log T}$,  $h_j \to \infty$ 
and $h_j \leq h_0 \asymp  1/(\alpha \log \log T)$ for $0 \leq j \leq K$. We then have that
$$ \frac{2a}{r \beta_j} \log  \log \frac{1}{\Delta_{j+1} \alpha} 
\leq  \frac{2a}{r \beta_j} \log \log h_0.$$
Since $\alpha \gg  \max\Big \{\frac{(\log \log T)^{\frac1k+\varepsilon}}{(\log T)^{\frac{1}{2k}}}, \frac{1}{\log T} \Big\}$ and $h_0 \asymp 1/(\alpha \log \log T)$,
we also have
$$\log \log h_0 \leq \log \log \log  T.$$  Hence
$$ \frac{2a}{r \beta_j} \log  \log \frac{1}{\Delta_{j+1} \alpha} \leq  \frac{2 \log \log \log  T}{\beta_j}.$$
Now note that
\begin{equation}
\label{small} \Big(\log \frac{1}{\Delta_j \alpha}\Big)^{2} \log( \beta_j \log T)  = O \Big(\frac{1}{\beta_j}\Big).
\end{equation}
Indeed, this follows because 
\begin{align*}
\beta_j  \Big(\log \frac{1}{\Delta_j \alpha}\Big)^{2} \log( \beta_j \log T) \ll \frac{(\log h_0)^2}{h_0 \alpha \log T} \log\Big(\frac{1}{\alpha} \Big) \ll \frac{ \log (1/\alpha)^3 \log \log T}{\log T} \ll \frac{ (\log \log T)^3}{\log T}.
\end{align*}
We also have that
$$ 2k \Big( \log \frac{1}{\Delta_j \alpha} \Big) = O \Big( \log \frac{1}{\beta_j} \Big).$$
Now if $a(2d-1)/r -2k \leq 0$, then combining the bounds above and \eqref{tobd_s2}, it follows that 
\begin{align*}
&  \sum_j (K-j) \sqrt{\frac{1}{\beta_{j+1}}} \exp \bigg( - \frac{(2+k\varepsilon) \log\log\log T}{\beta_j}+\frac{2k \log C}{\beta_j} \nonumber \\
& + \frac{\log(\beta_j \log T)}{\beta_j} \Big( \frac{a(2d-1)}{r}-2k \Big)+ \frac{2a}{r \beta_j} \log \Big(ke^{3/2} b(\Delta_{j+1}) \Big(\log \frac{1}{\Delta_{j+1} \alpha}\Big)\Big)\nonumber  \\
&+\frac{2k}{\beta_j} \log \frac{1}{1-\varepsilon/2}+\frac{a(2d-1)}{r \beta_j} \Big(\log\frac{r}{a}\Big)  + 2k \Big( \log \frac{1}{\Delta_j \alpha} \Big) +k^2 b(\Delta_j)^2\Big (\log \frac{1}{\Delta_j \alpha}\Big)^{2 \gamma(\Delta_j)} \log( \beta_j \log T) \bigg) \\
& \ll (\log \log T) \sum_j \exp \Big( - \frac{k\varepsilon \log \log \log T}{\beta_j} + O \Big(\frac{1}{\beta_j} \Big) \Big),
\end{align*} where the sum over $j$ above is over $j$ such that $\gamma(\Delta_j)=1$. It furthers follows that the above is
$ o( (\log \log T)^2),$ so the contribution from these $j$ to the integral in \eqref{tobd_s2} is 
\begin{equation}
\ll o (T (\log \log T)^{k+2}).
\label{smallj}
\end{equation}
If $ a(2d-1)/r -2k >0$, then rearranging the terms in \eqref{s2}, we have that
\begin{align*}
 & \frac{ \log \log T}{\beta_j} \Big( 2k \frac{ \log\frac{1}{\alpha}}{\log \log T}-\frac{a(2d-1)}{r}  \Big) + \frac{\log(\beta_j \log T)}{\beta_j} \Big( \frac{a(2d-1)}{r}-2k \Big)  \\
 & = \frac{2k(  \frac{\log\frac{1}{\alpha}}{\log \log T}-1) \log \log T}{\beta_j} - \frac{\log(1/\beta_j)}{\beta_j} \Big( \frac{a(2d-1)}{r}-2k \Big)  \leq - \frac{\log(1/\beta_j)}{\beta_j} \Big( \frac{a(2d-1)}{r}-2k \Big)  ,
\end{align*}
where we have used the fact that $\alpha \gg 1/\log T$.

We also have
$$ \frac{2a}{r \beta_j} \log  \log \frac{1}{\Delta_{j+1} \alpha} \leq  \frac{2 \log \log (1/\beta_j)}{\beta_j},$$
so the contribution in this case  from those $j$ with $\gamma(\Delta_j)=1$ is
\begin{align*}
&\ll \sum_j (K-j) \sqrt{\frac{1}{\beta_{j+1}}} \exp \Big( - \frac{\log(1/\beta_j)}{\beta_j} \Big( \frac{a(2d-1)}{r}-2k \Big)  + \frac{2 \log \log (1/\beta_j)}{\beta_j} + O \Big( \frac{1}{\beta_j}\Big)  \Big) \\
& \ll (\log \log T)^2.
\end{align*}
Hence the contribution to \eqref{s2} in this case will be
\begin{equation}
\label{tbig}
\ll T (\log \log T)^{k+2}.
\end{equation}
Now combining the bounds \eqref{j0}, \eqref{smallj}, \eqref{tbig}, it follows that
$$ \int_T^{2T} S_2(t) \, dt \ll
\begin{cases}
 T (\log \log T)^k \Big( \frac{\log (\alpha \log T+1)}{\alpha} \Big)^{k^2} & \mbox{ if } \alpha \log T \to \infty, \\ 
 T (\log \log T)^k (\log T)^{k^2 (1+\frac{1}{T^{c\alpha}-1})^2} & \mbox{ if } \alpha \asymp \frac{1}{\log T}.
\end{cases}
$$


Using the above, \eqref{s1} and \eqref{0}, the conclusion follows when 
$\alpha \gg\max\Big \{\frac{(\log \log T)^{\frac1k+\varepsilon}}{(\log T)^{\frac{1}{2k}}}, \frac{1}{\log T} \Big\}.$
}

\kommentar{
Now we assume that
$$\frac{1}{(\log T)^{\frac{1}{2k}-\varepsilon}} \ll \alpha \ll \frac{1}{\log T},$$ which can only happen if $k<1/2$. Using Holder's inequality we get that 
\begin{align*}
\int_T^{2T}  |\zeta(\tfrac12+\alpha+it)|^{-2k} \, dt \leq  \Big( \int_T^{2T}  |\zeta(\tfrac12+\alpha+it)|^{-2km} \, dt \Big)^{1/m} \Big( \int_T^{2T} 1 \, dt \Big)^{\frac{m-1}{m}},
\end{align*}
where $m$ is such that $km \geq 1/2$. Picking $m=1/(2k)$ and using the previous bound yields
\begin{align}
\int_T^{2T}  |\zeta(\tfrac12+\alpha+it)|^{-2k} \, dt \ll 
\begin{cases}
 T (\log \log T)^k \Big( \frac{\log (\alpha \log T+1)}{\alpha} \Big)^{k/2} & \mbox{ if } \alpha \log T \to \infty, \\ 
 T (\log \log T)^k (\log T)^{ \frac{k}{2} (1+\frac{1}{T^{c\alpha}-1})^2} & \mbox{ if } \alpha \asymp \frac{1}{\log T}.
\end{cases}
\label{k2}
\end{align}
}
\section{Proof of Theorem \ref{mainthm1}, bound \eqref{1}; some recursive estimates}
\label{s_recursive}
Here, we will prove the bound \eqref{2}. To do that, we will use an inductive argument, which will be performed in Subsection \ref{recursive_section}. The first step of the argument in carried out in the next subsection.
\label{recursive}
\subsection{The range $\alpha \gg \frac{ (\log \log T)^{\frac{4}{k}+\varepsilon}}{(\log T)^{\frac{1}{2k}}}$, $k \geq 1/2$, the first step}
\label{subsection_first_improvement}
We previously obtained the bound \eqref{1} in the region $\alpha \gg \frac{1}{(\log T)^{\frac{1}{2k}-\varepsilon}}$ and $k \geq 1/2$. Hence,
we will assume that
\begin{equation}  \frac{ (\log \log T)^{\frac{4}{k}+\varepsilon}}{(\log T)^{\frac{1}{2k}}} \ll \alpha = o \Big(\frac{1}{(\log T)^{\frac{1}{2k}-\varepsilon}} \Big),
\label{range}
\end{equation}
when $k \geq 1/2$.

Let 
\begin{equation}
\alpha = \frac{ (\log \log T)^b}{(\log T)^{\frac{1}{2k}}},\label{choice_b}
\end{equation} where $b \geq 4/k+\varepsilon$. From \eqref{range}, we have that $b = o (\frac{\log \log T}{ \log \log \log T})$.
We will show that for any $\delta>0$, we have 
\begin{equation*}
\int_T^{2T} |\zeta(\tfrac12+\alpha+it)|^{-2k} \, dt \ll  T \exp \bigg( \frac{ (\log T)^{\frac{3(1+\delta)}{kb-1}}}{ \exp \big( \frac{2 \log \log T \log \log \log \log T}{ (kb-1) \log \log \log T} \big)}  \bigg).
\end{equation*}

We choose $\beta_0, \ell_0, s_0$ as in \eqref{beta_0}
and $\beta_j, \ell_j, s_j$ are chosen as in \eqref{beta_j}. We choose $a, d ,r$ such that
\begin{equation}
\frac{a(2d-1)}{r} = 1 - \frac{n \log \log \log T}{ \log \log T},
\label{adr2}
\end{equation}
where 
\begin{equation}
n = (2kb-2) \Big(1- \frac{10 \delta}{12(1+\delta)} \Big).
\label{choice_for_n}
\end{equation} For simplicity of notation, let $x = \frac{\log \log \log T}{\log \log T}$.
We can take
\begin{align}
a &=\frac{1-n(1-\frac{\delta}{24})x}{1-n(1-\frac{\delta}{12})x} , \qquad 
d = 1-\frac{n}{2}\Big(1-\frac{\delta}{12}\Big)x , \qquad 
r=\frac{ 1-n(1-\frac{\delta}{24})x}{1-nx}.
\label{adr3}
\end{align}
We choose $\beta_K$ such that 
$$ \beta_K^{1-d} \Big( \frac{a^d r^{1-d}}{r^{1-d}-1}+\frac{2r}{r-1}  \Big)\leq 1-a.$$ 
Again, the above inequality ensures that the conditions in Propositions \ref{j+1} and \ref{K_lemma} are satisfied. Note that the condition above can be re-expressed as
$$\beta_K^{1-d} \leq c_1 (1-a)(r-1)(1-d),$$
for some constant $c_1>0$. 
We then choose $K$ such that $\beta_K$ is the largest of the form in \eqref{beta_j} such that
\begin{equation}
\beta_K \leq \frac{ \exp \big( \frac{6 \log \log T \log \log \log \log T}{  n\log \log \log T} \big)}{(\log T)^{\frac{6}{n(1-\frac{\delta}{12})}}}.
\label{beta_k_new}
\end{equation}

If $t \notin \mathcal{T}_0$, then we proceed as in Section \ref{bigshift} and similarly to equation \eqref{0}, we get that
\begin{align}
	&\int_{[T,2T] \setminus \mathcal{T}_0} |\zeta(\tfrac12+\alpha+it)|^{-2k} \, dt \ll   T \Big( \frac{1}{1-(\log T)^{-2\alpha}} \Big)^{ \frac{(1+\varepsilon)k \log T}{ \log \log T}} \nonumber \\
	& \qquad\times  \sqrt{s_0}\exp\bigg(-(2d-1)s_0 \log s_0+2s_0 \log \Big(ke^{3/2} b(\Delta_0) \Big(\log \frac{1}{\Delta_0 \alpha}\Big)^{\gamma(\Delta_0)} \sqrt{\log  \log T^{\beta_0}}  \Big) \bigg)\nonumber\\
&\quad=o(T). \label{small_0}
\end{align}

Now we suppose that $t \in \mathcal{T}_0$. Similarly as in Section \ref{bigshift}, using Proposition \ref{K_lemma} we get that
\begin{align}
 \int_T^{2T} S_1(t) \, dt&  \ll  T(\log \log T)^k  \exp \Bigg( c_1 \frac{ (\log T)^{\frac{6}{n(1-\frac{\delta}{12})}} \log \log T}{   \exp \big( \frac{6 \log \log T \log \log \log \log T}{n(1-\frac{\delta}{12})\log \log \log T} \big)}   \Bigg)  \exp \Big( k^2 (\log \log T)^3 \Big)\nonumber \\
 & \ll T \exp \Bigg( \frac{ (\log T)^{\frac{6}{n(1-\frac{\delta}{12})}}}{ \exp \big( \frac{4 \log \log T \log \log \log \log T}{n \log \log \log T} \big)}  \Bigg), \label{s1_again}
\end{align} 
for some $c_1>0$ (note that the constant $c_1$ can change from line to line).  

To bound the contribution from $S_2(t)$, we proceed as in equation \eqref{s2} and obtain that
\begin{align}
&\int_T^{2T}  S_2(t) \, dt  \ll T (\log \log T)^k \sum_{j=0}^{K-1} (K-j) \sqrt{\frac{1}{\beta_{j+1}}} \exp \bigg( \frac{ \log \log T}{\beta_j} \Big( 2ku-\frac{a(2d-1)}{r}  \Big) \nonumber \\
&\ \ + \frac{\log(\beta_j \log T)}{\beta_j} \Big( \frac{a(2d-1)}{r}-2k \Big)+ \frac{2a}{r \beta_j} \log \Big(ke^{3/2} b(\Delta_{j+1}) \Big(\log \frac{1}{\Delta_{j+1} \alpha}\Big)^{\gamma(\Delta_{j+1})}\Big)  \nonumber\\\nonumber \\
&\  \ +\frac{2k}{\beta_j} \log \frac{1}{1-\varepsilon/2} +\frac{a(2d-1)}{r \beta_j} \log\frac{r}{a} + 2k \Big( \log \frac{1}{\Delta_j \alpha} \Big)^{\gamma(\Delta_j)} \bigg) \big( \log T^{\beta_j} \big)^{k^2 b(\Delta_j)^2 (\log \frac{1}{\Delta_j \alpha})^{2 \gamma(\Delta_j)}}.\label{again}
\end{align}
Since $\alpha = \frac{(\log \log T)^b}{(\log T)^{\frac{1}{2k}}}$ and given \eqref{adr2}, we get that 
\begin{align*}
\int_T^{2T}  S_2(t) \, dt &  \ll T (\log \log T)^k \sum_{j=0}^{K-1} \exp \bigg( \frac{ \log \log \log T}{\beta_j}(n-2kb+2) + (\log \log T)^3 + O \Big( \frac{1}{\beta_j} \Big) \bigg),
\end{align*}
where we used the fact that $K \ll \log \log T$ and the fact that
$$ \log \log \frac{1}{\beta_j \alpha \log T} \leq \log \log \frac{1}{\beta_0 \alpha \log T} \leq \frac{\log \log \log T}{2k} .$$
With the choice \eqref{choice_for_n}, we have that $n-2kb+2 \leq - \delta$, and then
\begin{align*}
\int_T^{2T} &  S_2(t) \, dt   \ll T (\log \log T)^k \sum_{j=0}^{K-1} \exp \bigg( -\frac{  \delta \log \log \log T}{\beta_j} + (\log \log T)^3+ O \Big( \frac{1}{\beta_j} \Big)  \bigg).
\end{align*}

In the sum over $j$ above, the maximum is attained at $j=K-1$, and given the choice \eqref{beta_k_new} for $\beta_K$, it follows that
\begin{equation}
\int_T^{2T} S_2(t) \, dt =o(T).
\label{s2_again} 
\end{equation}
 Combining equations \eqref{small_0}, \eqref{s1_again} and \eqref{s2_again}, it follows that
\begin{equation}
\int_T^{2T} |\zeta(\tfrac12+\alpha+it)|^{-2k} \, dt \ll  T \exp \Bigg( \frac{ (\log T)^{\frac{3(1+\delta)}{kb-1}}}{ \exp \big( \frac{2 \log \log T \log \log \log \log T}{ (kb-1) \log \log \log T} \big)}  \Bigg).
\label{first_step}
\end{equation}

\subsection{The range $\alpha \gg \frac{ (\log \log T)^{\frac{4}{k}+\varepsilon}}{(\log T)^{\frac{1}{2k}}}$, $k \geq 1/2$, a recursive bound}
\label{recursive_section}
Here, we have the same setup as in the previous subsection. Namely, we assume \eqref{range} and \eqref{choice_b}.

We will perform the same argument as before, but with a different choice of parameters. We suppose that at step $m-1$, for any $\delta>0$, we have the bound
\begin{equation}
\label{recursive}
\int_T^{2T}  |\zeta(\tfrac12+\alpha+it)|^{-2k} \, dt \ll T \exp \Bigg( \frac{ (\log T)^{(1+\delta) ( \frac{3}{kb-1})^{m-1}} \log \log T}{ \exp \big( \frac{2\cdot  3^{m-2} \log \log T \log \log \log \log T}{(kb-1)^{m-1} \log \log \log T} \big)}  \Bigg) \exp \Big( (1+\delta)k^2  (\log \log T)^3  \Big).
\end{equation}
Note that we proved the first step of the induction in Subsection \ref{subsection_first_improvement} (see equation \eqref{first_step}). 
Using \eqref{recursive}, we will show that
\begin{equation}
\label{recursive_to_prove}
\int_T^{2T}  |\zeta(\tfrac12+\alpha+it)|^{-2k} \, dt \ll T \exp \Bigg( \frac{ (\log T)^{(1+\delta) ( \frac{3}{kb-1})^{m}} \log \log T}{ \exp \big( \frac{2\cdot  3^{m-1} \log \log T \log \log \log \log T}{(kb-1)^{m} \log \log \log T} \big)}  \Bigg) \exp \Big(  (1+\delta)k^2 (\log \log T)^3  \Big).
\end{equation}
Let \begin{equation}
\varepsilon' = \frac{\delta(kb-1) \log \log T}{4kb(m-1)( \log \log T-2kb\log \log \log T+ \frac{ \delta(kb-1) \log \log \log T}{2(m-1)})},
\label{epsilon_3}
\end{equation}
and 

\begin{equation}
p = \frac{ \log\log T}{\log \log T - 2kb \varepsilon' \log \log \log T},\qquad q = \frac{ \log \log T}{2kb \varepsilon' \log \log \log T},
\label{p}
\end{equation}
so that $1/p+1/q=1$. Let
\begin{equation}
f = b(1-\varepsilon').
\label{f}
\end{equation}
We will perform the inductive argument as long as
\begin{equation}
\label{iterations}
 \Big( \frac{kpf-1}{3}\Big)^{m-1} < \frac{\delta \log \log T}{9 \log \log \log T}.
\end{equation}
We choose 
\begin{equation}
\beta_0 = \frac{3(2d-1) \big(1+\frac{\delta}{3}\big) (  \frac{3}{kpf-1})^{m-1} \exp \big( \frac{2\cdot  3^{m-2} \log \log T \log \log \log \log T}{(kpf-1)^{m-1} \log \log \log T} \big)}{4q (\log T)^{(1+\frac{\delta}{3})( \frac{3}{kpf-1})^{m-1}}}, \, s_0 =  \Big[ \frac{1}{q\beta_0} \Big],\, \ell_0 = 2 \Big\lceil \frac{s_0^d}{2}\Big\rceil,
\label{beta0_yet_again}
\end{equation}
where
\begin{align*}
a &=\frac{1-n(1-\frac{\delta }{12})x}{1-n(1-\frac{\delta}{6})x} , \qquad 
d = 1-\frac{n}{2}\Big(1-\frac{\delta}{6}\Big)x , \qquad
r=\frac{ 1-n\big(1-\frac{\delta}{12}\big)x}{1-nx}.
\end{align*}
As before, we have
$$\frac{a(2d-1)}{r} = 1- nx.$$
Recall that $x= \frac{\log \log \log T}{ \log \log T}$ and we choose
\begin{equation}
n = \frac{2 (kb-1)(kpf-1)^{m-1}\big(1-\frac{\delta}{24}\big)}{3^{m-1} \big(1+\frac{\delta}{3}\big)}.
\label{choice_n}
\end{equation}
We also choose $\beta_K$ such that $K$ is the maximal integer for which 
\begin{equation}
\beta_K \leq \frac{ \exp \big( \frac{6 \log \log T \log \log \log \log T}{  n \log \log \log T} \big)}{(\log T)^{\frac{6}{n(1-\frac{\delta}{24})}}}.
\label{beta_k_new_again}
\end{equation}

If $t \notin \mathcal{T}_0$, then there exists $0 \leq v \leq K$ such that $\frac{ke^2}{\ell_0}|P_{0,v}(t)|>1$. Using H\"{o}lder's inequality, we have
	\begin{align*}
		\int_{[T,2T] \setminus \mathcal{T}_0} & | \zeta(\tfrac12+\alpha+it)|^{-2k} \, dt \leq \int_T^{2T} \Big( \frac{ke^2}{\ell_0} |P_{0,v}(t)| \Big)^{2s_0} | \zeta(\tfrac12+\alpha+it)|^{-2k}   \, dt \nonumber\\
		&  \leq  \Big( \frac{ke^2}{\ell_0} \Big)^{2s_0}\Big( \int_T^{2T}  | \zeta(\tfrac12+\alpha+it)|^{-2kp} \, dt  \Big)^{\frac{1}{p}} \Big( \int_T^{2T} |P_{0,v}(t)|^{2s_0q} \, dt  \Big)^{\frac{1}{q}}.
\end{align*}
For the first integral above, we will use the bound \eqref{recursive} with $\delta \mapsto \delta/3$. Note that
$$\alpha = \frac{ (\log \log T)^f}{(\log T)^{\frac{1}{2kp}}}.$$
Then using the recursive bound \eqref{recursive}, we obtain that
\begin{align} \label{apriori}
\int_T^{2T}  | \zeta(\tfrac12+\alpha+it)|^{-2kp}& \ll T \exp \Bigg( \frac{ (\log T)^{(1+\frac{\delta}{3}) ( \frac{3}{kpf-1})^{m-1}} \log \log T}{ \exp \big( \frac{2\cdot 3^{m-2} \log \log T \log \log \log \log T}{(kpf-1)^{m-1} \log \log \log T} \big)}  \Bigg) \nonumber\\
&\qquad\qquad\times\exp \Big( \Big(1+\frac{\delta}{3}\Big) p^2 k^2 (\log \log T)^3 \Big).
\end{align}

Now using Proposition \eqref{contrib_0}, we have that 
\begin{align*}
\int_T^{2T} |P_{0,v}(t)|^{2s_0q}  \, dt \ll T (s_0 q)! b(\Delta_0)^{2s_0q} \Big( \log \frac{1}{\Delta_0 \alpha} \Big)^{2 s_0 q \gamma(\Delta_0)} (\log \log T^{\beta_0})^{s_0 q}.
\end{align*}
Combining the bound above and \eqref{apriori} and using Stirling's formula, we get that
\begin{align}
\int_{[T,2T] \setminus \mathcal{T}_0} & | \zeta(\tfrac12+\alpha+it)|^{-2k} \, dt \ll T \exp \Bigg( \frac{ (\log T)^{(1+\frac{\delta}{3}) ( \frac{3}{kpf-1})^{m-1}} \log \log T}{ \exp \big( \frac{2\cdot 3^{m-2} \log \log T \log \log \log \log T}{ (kpf-1)^{m-1} \log \log \log T} \big)}  \Bigg) \nonumber  \\
& \times  \exp \Big( \Big(1+\frac{\delta}{3}\Big)  p k^2 (\log \log T)^3 \Big)    s_0^{\frac{1}{2q}} \exp \bigg(-(2d-1)s_0 \log s_0 \nonumber \\
&\qquad\qquad+2s_0 \log \Big(ke^{3/2}  \sqrt{q} b(\Delta_0) \Big(\log \frac{1}{\Delta_0 \alpha}\Big)^{\gamma(\Delta_0)} \sqrt{\log  \log T^{\beta_0}}  \Big) \bigg).  \label{tb9} 
\end{align}

Recall the choice \eqref{beta0_yet_again} for $s_0$. Note that we have
$$\log \log \frac{1}{\Delta_0 \alpha} \leq\log \log \log T,\qquad\log \log (\beta_0 \log T)< \log \log  \log T$$ and
$$\log q \leq \log \log \log T.$$
Using the three bounds above in \eqref{tb9}, it follows that
\begin{align*}
&\int_{[T,2T] \setminus \mathcal{T}_0} | \zeta(\tfrac12+\alpha+it)|^{-2k} \, dt \ll T \exp \Bigg( \frac{ (\log T)^{(1+\frac{\delta}{3}) ( \frac{3}{kpf-1})^{m-1}} \log \log T}{ \exp \big( \frac{2\cdot 3^{m-2} \log \log T \log \log \log \log T}{ (kpf-1)^{m-1} \log \log \log T} \big)}  \Bigg) \nonumber  \\
&\qquad \times  \exp \Big(   \Big(1+\frac{\delta}{3}\Big)  pk^2(\log \log T)^3 \Big)   \exp \Big(-(2d-1)s_0 \log s_0+ 4 s_0 \log \log \log T +O(s_0) \Big).
\end{align*}
By \eqref{iterations} we get
\begin{align}\label{t0_again}
\int_{[T,2T] \setminus \mathcal{T}_0}  | \zeta(\tfrac12+\alpha+it)|^{-2k} \, dt &\ll T \exp \Bigg(- \frac{ (\log T)^{(1+\frac{\delta}{3}) ( \frac{3}{kpf-1})^{m-1}} \log \log T}{4 \exp \big( \frac{2\cdot 3^{m-2} \log \log T \log \log \log \log T}{ (kpf-1)^{m-1} \log \log \log T} \big)}  \Bigg) \nonumber\\ 
&\qquad\qquad \times \exp \Big((1+\delta)k^2 (\log \log T)^3 \Big)\nonumber\\
&\ll T  \exp \Big((1+\delta)k^2 (\log \log T)^3 \Big). 
\end{align}

Now suppose that $t \in \mathcal{T}_0$. Using Proposition \ref{K_lemma} and proceeding as before, we get that
\begin{align}
& \int_T^{2T} S_1(t) \, dt \ll (\log \log T)^k \exp \bigg( c_1 (  \log T )^{\frac{3^m(1+\frac{\delta}{3})}{(kb-1)(kpf-1)^{m-1}(1-\frac{\delta}{24})^2}}  \log \log T\nonumber \\
&\qquad \times \exp \Big( - \frac{ 3^{m} \log \log T \log \log \log \log T}{(kb-1)(kpf-1)^{m-1}\log \log \log T} \Big) \bigg) \exp \Big((1+\delta)k^2 ( \log \log T)^3 \Big),
\label{s1_''}
\end{align}
for some $c_1>0$, and 
where we trivially bounded $\gamma(\Delta_K) \leq 1$. Now given the choice of parameters in  \eqref{epsilon_3}, \eqref{p}, \eqref{f}, we have
$$ kpf-1 = (kb-1) \Big(1- \frac{\delta}{4(m-1)} \Big)+ O \Big(  \frac{\log \log \log T}{ \log \log T} \Big)> (kb-1) \Big(1 - \frac{7 \delta}{24(m-1)} \Big).$$ 
Using this in \eqref{s1_''} leads to
\begin{align}
\int_T^{2T} & S_1(t) \, dt \ll  T \exp \Bigg( \frac{ (\log T)^{(1+\delta) ( \frac{3}{kb-1})^{m}} \log \log T}{ \exp \big( \frac{2\cdot  3^{m-1} \log \log T \log \log \log \log T}{(kb-1)^m \log \log \log T} \big)}  \Bigg) \exp \Big(  (1+\delta)k^2 (\log \log T)^3  \Big).\label{s1_recursive}
\end{align}

To bound the contribution from $S_2(t)$, we proceed as in \eqref{again}.  We rewrite 
\begin{align}
\int_T^{2T}  S_2(t) \, dt &  \ll T (\log \log T)^k \sum_{j=0}^{K-1} \exp \bigg( \frac{ \log \log \log T}{\beta_j}(n-2kb+2) -  \frac{n \log \log \log T \log(\beta_j \log T)}{\beta_j \log \log T} \nonumber \\
& +\log \log T +k^2 (\log \log T)^3+O \Big( \frac{1}{\beta_j} \Big) \bigg), \label{s2tbd}
\end{align}
where we used the fact that $k \geq 1/2$. 
Note that the maximum in the sum over $j$ is attained either at $j=0$ or $j=K-1$. Now given the choices \eqref{beta0_yet_again} and \eqref{choice_n}, the contribution from $j=0$ is 
\begin{align*}
\ll \exp \Big(\frac{\log \log \log T}{\beta_0} \Big( -\frac{\delta(kb-1)}{4}+\frac{n \log \big( \frac{kpf-1}{3}\big)^{m-1}}{\log\log T}\Big) +O \Big(  \frac{1}{\beta_0} + (\log \log T)^3\Big)\Big),
\end{align*}
and again using the choices \eqref{choice_n} and \eqref{iterations}, it follows that the contribution from $j=0$ is negligible.

For the contribution from $j=K-1$, proceeding similarly as in the bound for $S_1(t)$, it follows that
\begin{align*}
\int_T^{2T} & S_2(t) \, dt \ll  T \exp \Bigg( \frac{ (\log T)^{(1+\delta) ( \frac{3}{kb-1})^{m}} \log \log T}{ \exp \big( \frac{2\cdot  3^{m-1} \log \log T \log \log \log \log T}{(kb-1)^m \log \log \log T} \big)}  \Bigg) \exp \Big( (1+\delta)k^2  (\log \log T)^3  \Big).
\end{align*}
Combining the above, \eqref{t0_again} and \eqref{s1_recursive}, the induction conclusion \eqref{recursive_to_prove} follows. 

Now taking $m$ maximal as in \eqref{iterations}, we get that
\begin{equation}
\int_T^{2T} | \zeta(\tfrac12+\alpha+it)|^{-2k} \, dt \ll  T \exp \Big((1+\delta) k^2(\log \log T)^3 \Big) .\label{improve2}
\end{equation}

\subsection{The range $\alpha \gg \frac{ (\log \log T)^{\frac{4}{k}+\varepsilon}}{(\log T)^{\frac{1}{2k}}}$, $k \geq 1/2$, once more}
Here, we use the same setup as in Subsection \ref{recursive_section}. Once again, we assume \eqref{range} and \eqref{choice_b}.

We will improve the bound \eqref{improve2}. The proof is similar to the proof in the previous cases, so we will skip some of the details.

As before, let
$$  p = \frac{ \log\log T}{\log \log T - 2kb \delta \log \log \log T}$$
and
\begin{equation}
\label{q_again}
q = \frac{ \log \log T}{2kb \delta \log \log \log T},
\end{equation} so that $1/p+1/q=1$.
We choose 
\begin{equation}
\label{5}
\beta_0 = \frac{(6d-5) \log \log \log T}{(1+2\delta)pqk^2 (\log \log T)^3},\quad s_0= \Big[ \frac{1}{q \beta_0} \Big],\quad \ell_0 = 2\Big\lceil \frac{s_0^d}{2}\Big \rceil.
\end{equation}
We choose $\beta_j, s_j, \ell_j$ as in \eqref{beta_j}, and $a, d, r$ are such that
$$ \frac{a(2d-1)}{r} = 1-\delta.$$
We also choose $K$ maximal such that 

\begin{equation} \label{betak_ct}
\beta_K \leq c,
\end{equation} where $c$ is a small constant as in \eqref{condition_c2}. Note that the conditions in Propositions \ref{j+1} and \ref{K_lemma} are satisfied.

We now proceed as before. If $t \notin \mathcal{T}_0$, then as in Subsection \eqref{recursive_section}, we have that
\begin{align*}
		\int_{[T,2T] \setminus \mathcal{T}_0} & | \zeta(\tfrac12+\alpha+it)|^{-2k} \, dt  \leq  \Big( \frac{ke^2}{\ell_0} \Big)^{2s_0}\Big( \int_T^{2T}  | \zeta(\tfrac12+\alpha+it)|^{-2kp} \, dt  \Big)^{\frac{1}{p}} \Big( \int_T^{2T} |P_{0,v}(t)|^{2s_0q} \, dt  \Big)^{\frac{1}{q}} \nonumber  \\
		& \ll T  \exp \Big((1+\delta)pk^2 (\log \log T)^3 \Big)  \exp \bigg(-(2d-1)s_0 \log s_0 \nonumber \\
		&\qquad\qquad\qquad+2s_0 \log \Big(ke^{3/2}  \sqrt{q} b(\Delta_0) \Big(\log \frac{1}{\Delta_0 \alpha}\Big)^{\gamma(\Delta_0)} \sqrt{\log  \log T^{\beta_0}}  \Big) \bigg).
\end{align*}
Note that $\gamma(\Delta_0)=0$ with the choice of parameters \eqref{5}. We deduce that
\begin{align}
		\int_{[T,2T] \setminus \mathcal{T}_0}  | \zeta(\tfrac12+\alpha+it)|^{-2k} \, dt   &\ll T  \exp \Big((1+\delta)pk^2 (\log \log T)^3 \Big)\nonumber\\
&\qquad\times  \exp \Big(-(2d-1)s_0 \log s_0 + 2s_0 \log \log \log T +O(s_0) \Big) \nonumber \\
		&=o(T),\label{ot}
\end{align}
by \eqref{q_again} and the choice of parameters in \eqref{5}. Using the choice of $\beta_K$ in \eqref{betak_ct}, we also get that
\begin{equation}\label{s1bis}
\int_T^{2T} S_1(t) \, dt \ll T (\log \log T)^k (\log T)^{k^2}.
\end{equation}

To bound the contribution from $S_2(t)$, we rewrite equation \eqref{again} using the fact that $\gamma(\Delta_j)=0$ for all $j$ and that $K \ll \log \log T$. We have 
\begin{align*}
\int_T^{2T}  S_2(t) \, dt &  \ll T (\log \log T)^{k+1} \sum_{j=0}^{K-1} \exp \bigg( \frac{ \log \log T}{\beta_j} \Big( 1-2k - \frac{2kb \log \log \log T}{\log \log T} \Big)\\
&\qquad\qquad+ \frac{4 \log \log \log T}{\beta_j} \Big(2k - \frac{a(2d-1)}{r} \Big)+k^2 \log \log T  + O\Big( \frac{1}{\beta_j} \Big)\bigg).
\end{align*}
In the sum over $j$ above, note that if $k=1/2$, then the exponential term is bounded by 
$$\exp \Big(- \frac{c_1 \log \log \log T}{\beta_j} + k^2 \log \log T \Big), $$ for some $c_1>0$. If $k>1/2$, then the exponential term is bounded by
$$ \exp \Big(- \frac{c_1  \log \log T}{\beta_j} + k^2 \log \log T \Big), $$ for some $c_1>0$. 
In both cases, we obtain that
$$ \int_T^{2T}  S_2(t) \, dt = o \Big(T (\log \log T)^k (\log T)^{k^2} \Big).$$
Combining the above, \eqref{s1bis} and \eqref{ot}, the bound \eqref{1} follows.

\subsection{The range $ \frac{1}{(\log T)^{\frac{1}{2k}}} \ll \alpha =o\big( \frac{ (\log \log T)^{\frac{4}{k}+\varepsilon}}{(\log T)^{\frac{1}{2k}}}\big)$}
Here, we will prove the bound \eqref{2}. We will only sketch the proof, since it is similar to the proof in the previous cases. We choose the parameters $\beta_j, \ell_j, s_j$ as in \eqref{beta_0}, \eqref{beta_j}, and $a, d , r$ as in \eqref{adr2} and \eqref{adr3}, where 
\begin{equation}
n = \frac{6}{1-\delta}.
\label{n_again'}
\end{equation}
We also choose $\beta_K$ as in \eqref{beta_k_new}. We now proceed as in Subsection \ref{subsection_first_improvement}.

As in \eqref{small_0}, we have 
\begin{align}
	&\int_{[T,2T] \setminus \mathcal{T}_0} |\zeta(\tfrac12+\alpha+it)|^{-2k} \, dt =o(T). \label{small_5}
\end{align}
Also, as in \eqref{s1_again}, and keeping in mind the choice \eqref{n_again'} for $n$, we get that
\begin{align}
\int_T^{2T} S_1(t) \, dt \ll T \exp \Bigg( \frac{ \log T \log \log T}{\exp(\frac{2 \log \log T \log \log \log \log T}{3 \log \log \log T} ) } \Bigg). 
\label{s1_again5}
\end{align}

To bound the contribution from $S_2(t)$, we proceed as in equation \eqref{again}. Since $\alpha \gg \frac{1}{(\log T)^{\frac{1}{2k}}}$, we have
\begin{align*}
\int_T^{2T}  S_2(t) \, dt &  \ll T (\log \log T)^k \sum_{j=0}^{K-1} \exp \bigg( \frac{ \log \log \log T}{\beta_j}(n+2)  + (\log \log T)^3+O \Big( \frac{1}{\beta_j} \Big) \bigg).
\end{align*}
The maximum in the sum above is attained when $j=0$. So, keeping in mind the choice of $\beta_0$ in \eqref{beta_0}, we obtain that 
\begin{align}
\int_T^{2T} S_2(t) \, dt \ll T \exp \Big( \frac{(4+\delta)  \log T \log \log \log T}{\log \log T}  \Big),\label{s2_5}
\end{align}
after a relabeling of the $\delta$.
Combining \eqref{small_5}, \eqref{s1_again5} and \eqref{s2_5}, the conclusion follows in this case.

\kommentar{there exists $0 \leq v \leq K$ such that $\frac{ke^2}{\ell_0}|P_{0,v}(t)|>1$. Then using Holder's inequality, we have
	\begin{align*}
		\int_{[T,2T] \setminus \mathcal{T}_0} & | \zeta(\tfrac12+\alpha+it)|^{-2k} \, dt \leq \int_T^{2T} \Big( \frac{ke^2}{\ell_0} |P_{0,v}(t)| \Big)^{2s_0} | \zeta(\tfrac12+\alpha+it)|^{-2k}   \, dt \nonumber\\
		&  \leq  \Big( \frac{ke^2}{\ell_0} \Big)^{2s_0}\Big( \int_T^{2T}  | \zeta(\tfrac12+\alpha+it)|^{-2kp} \, dt  \Big)^{\frac{1}{p}} \Big( \int_T^{2T} |P_{0,v}(t)|^{2s_0q} \, dt  \Big)^{\frac{1}{q}},
\end{align*}
where we pick $p=1+\varepsilon$, and $1/p+1/q=1$. For the first integral above, we use the bound from the previous section and we have that
\begin{equation}
\label{apriori}
\int_T^{2T}  | \zeta(\tfrac12+\alpha+it)|^{-2kp}\, dt \ll T^{1+(2k+1)\varepsilon}.
\end{equation}
Now using Proposition \eqref{contrib_0}, we have that 
\begin{align*}
\int_T^{2T} |P_{0,v}(t)|^{2s_0q}  \, dt \ll T (s_0 q)! b(\Delta_0)^{2s_0q} \Big( \log \frac{1}{\Delta_0 \alpha} \Big)^{2 s_0 q \gamma(\Delta_0)} (\log \log T^{\beta_0})^{s_0 q}.
\end{align*}
Combining the bound above and \eqref{apriori} and using Stirling's formula, we get that
\begin{align}
\int_{[T,2T] \setminus \mathcal{T}_0} & | \zeta(\tfrac12+\alpha+it)|^{-2k} \, dt \ll T^{\frac{(2k+1) \varepsilon}{p}} s_0^{\frac{1}{2q}} \exp \bigg(-(2d-1)s_0 \log s_0 \nonumber \\
& +2s_0 \log \Big(ke^{3/2}  \sqrt{q} b(\Delta_0) \Big(\log \frac{1}{\Delta_0 \alpha}\Big)^{\gamma(\Delta_0)} \sqrt{\log  \log T^{\beta_0}}  \Big) \bigg) \nonumber \\
& = o(T), \label{t0_again}
\end{align}
where we used the choice \eqref{b_0again} for $\beta_0 ,s_0,\ell_0$.}
\section{Proof of Theorems \ref{mainthm1} and \ref{mainthm2} for small shifts $\alpha$}
\label{smallshift}
Here, we will prove the bounds \eqref{3} and \eqref{7} . 
The proof will be similar to the one in the previous subsection, but we choose the parameters differently. Recall that
$$ u = \frac{ \log \frac{1}{\alpha}}{\log \log T}.$$
Let $\delta>0$. We choose
\begin{equation}
\beta_0 = \frac{(2ku+2d-1- \frac{a(2d-1)}{r}) \log \log T }{(1+\delta)ku \log T}, \quad s_0 =\Big [\frac{1}{\beta_0}\Big], \quad\ell_0 =2\Big\lceil\frac{ s_0^d}{2}\Big \rceil,
\label{beta0_again}
\end{equation}
where we pick
$$a = \frac{1-3k\varepsilon}{1-2k\varepsilon}, \quad r= \frac{1}{1-2k\varepsilon}, \quad d= \frac{2-7k\varepsilon}{2(1-3k\varepsilon)},$$
so that
\begin{equation}
\frac{a(2d-1)}{r} = 1-4k\varepsilon.
\label{adr2'}
\end{equation}
We further pick $\beta_j,\ell_j, s_j$ as in \eqref{beta_j}. We choose $K$ to be maximal such that
\begin{equation}
\beta_K \leq c,
\label{betac}
\end{equation} for $c$ a small constant such that 
\begin{equation}
\label{condition_c}
 c^{1-d} \Big( \frac{ra^d}{r^{1-d}-1}+ \frac{2r}{r-1} \Big)\leq 1-a - \beta_0^{1-d}.
 \end{equation}
Note that the above ensures that the conditions in Propositions \ref{contrib_0}, \ref{j+1} and \ref{K_lemma} are satisfied. 

If $t \notin \mathcal{T}_0$, then we proceed as before, and similarly to equation \eqref{0} we get that 
\begin{align*}
	&\int_{[T,2T] \setminus \mathcal{T}_0} |\zeta(\tfrac12+\alpha+it)|^{-2k} \, dt \ll   T^{1+(1+\delta)ku} \nonumber \\
	&\qquad \times  \exp\bigg(-(2d-1)s_0 \log s_0+2s_0 \log \Big(ke^{3/2} b(\Delta_0) \Big(\log \frac{1}{\Delta_0 \alpha}\Big)^{\gamma(\Delta_0)} \sqrt{\log  \log T^{\beta_0}}  \Big) \bigg).
\end{align*}
Keeping in mind the choice of parameters \eqref{beta0_again}, we obtain that
\begin{equation}
\int_{[T,2T] \setminus \mathcal{T}_0} |\zeta(\tfrac12+\alpha+it)|^{-2k} \, dt \ll T^{1+(1+\delta) \frac{2ku-\frac{a(2d-1)}{r}}{2ku - \frac{a(2d-1)}{r}+2d-1}} \exp \Big(O \Big( \frac{\log T \log \log \log T}{\log \log T} \Big)\Big).
\label{nott0'}
\end{equation}

Now if $t \in \mathcal{T}_0$, then since $\gamma(\Delta_K)=1$, we use Proposition \ref{K_lemma} and the expression \eqref{betac} for $\beta_K$ as before to get that
\begin{equation}
\int_T^{2T} S_1(t) \, dt \ll T (\log T)^{O(1)} \exp \bigg( \Big( \frac{1}{4}+\varepsilon \Big)k^2 \Big( \log \frac{1}{\alpha} \Big)^2 \log \log T \bigg).
\label{s1'}
\end{equation}
Similarly as before (see equation \eqref{s2}), we have
\begin{align*}
\int_T^{2T}  S_2(t) \, dt & \ll T (\log \log T)^k \sum_{j=0}^{K-1} (K-j) \sqrt{\frac{1}{\beta_{j+1}}} \exp \bigg( \frac{ \log \log T}{\beta_j} \Big( 2k u-\frac{a(2d-1)}{r}  \Big) \nonumber \\
&\ + \frac{\log(\beta_j \log T)}{\beta_j} \Big( \frac{a(2d-1)}{r}-2k \Big)+ \frac{2a}{r \beta_j} \log \Big(e^{3/2} b(\Delta_{j+1}) \Big(\log \frac{1}{\Delta_{j+1} \alpha}\Big)^{\gamma(\Delta_{j+1})} \Big)  \nonumber \\
&\ +\frac{2k}{\beta_j} \log \frac{1}{1-\varepsilon/2} \bigg) \big( \log T^{\beta_j}\big)^{k^2 b(\Delta_j)^2 (\log \frac{1}{\Delta_j \alpha})^{2 \gamma(\Delta_j)}}.
\end{align*}
As $K \ll \log \log T$, we further write the above as 
\begin{align*}
\int_T^{2T} & S_2(t) \, dt  \ll T\sum_{j=0}^{K-1} \exp \bigg( \frac{ \log \log T}{\beta_j} \Big( 2k u-\frac{a(2d-1)}{r}  \Big)+ O \Big( \frac{\log T \log \log \log T}{\log \log T} \Big) \bigg)
\end{align*}

Since $\alpha = o\big(\frac{1}{(\log T)^{\frac{1}{2k}-\varepsilon}}\big),$ we have that 
$$ 2ku - \frac{a(2d-1)}{r} \geq 2k\varepsilon.$$
Hence the sum over $j$ above achieves its maximum when $j=0$. Using \eqref{beta0_again}, we obtain that
\begin{align}
\label{s2'}
\int_T^{2T} S_2(t) \, dt \ll  T^{1+(1+\delta) \frac{2ku-\frac{a(2d-1)}{r}}{2ku - \frac{a(2d-1)}{r}+2d-1}} \exp \Big(O \Big( \frac{\log T \log \log \log T}{\log \log T} \Big)\Big).
\end{align}
Combining equations \eqref{adr2'}, \eqref{nott0'}, \eqref{s1'} and \eqref{s2'} and after a relabeling of the $\varepsilon$,  the bounds \eqref{3} and \eqref{7} follow.

\kommentar{
--------------------\acom{ OLD}

\kommentar{
We pick parameters $a,d,r$ such that
\begin{equation}
a= 2-2 k \varepsilon,\qquad r = \frac{1-2k\varepsilon}{1-3k\varepsilon},\qquad d = \frac{1-3k\varepsilon/2}{1-k\varepsilon}.
\label{adr}
\end{equation}
Let $$I_0 = (1, T^{\beta_0}],\ I_1 = (T^{\beta_0}, T^{\beta_1}],\ \ldots,\ I_K = (T^{\beta_{K-1}}, T^{\beta_K}]$$ for a sequence $\beta_0 , \ldots, \beta_K$ chosen as follows:
\begin{align} 
\label{betaj}
{\color{purple}\beta_0 = \frac{(1-\varepsilon)(1-2\alpha)(2d-1)(\log \log T)^2}{(1+2\varepsilon)k ( \log T )\log \frac{1}{1-(\log T)^{-2\alpha}}}},\qquad \beta_j = r^{j} \beta_0,\qquad 1\leq j\leq K.
\end{align}
Here $K$ is chosen such that $\beta_K= c$ for $c$ as in \eqref{condition_c} if $\alpha \asymp(\log T)^{-1}$, or
\begin{equation}
\label{k}
\beta_K=\frac{\log(\alpha\log T)}{\alpha\log T}
\end{equation}
otherwise. We consider the sequences $(\ell_j)$ and $(s_j)$ such that 
\begin{equation}
\label{sj}
{\color{purple}s_0=\Big[\frac{1-\varepsilon}{\beta_0} \Big]},\qquad s_j = \Big[ \frac{a}{{\color{red}{2}}\beta_j} \Big],\qquad  1\leq j\leq K,
\end{equation}
and 
\begin{equation}
\label{ell_j}
\ell_j = 2\Big[\frac{s_j^d}{2}\Big],\qquad  0\leq j\leq K.
\end{equation}
 We let $T^{\beta_j} = e^{2 \pi \Delta_j}$ for every $0\leq j \leq K$ and let 
$${\color{purple}P_{u,v}(t)=\sum_{  p\in I_u} \frac{b_{\alpha}(p;\Delta_v)}{p^{1/2+\alpha+it}}},$$ where
$b_{\alpha}(n;\Delta)$ is a completely multiplicative function in the first variable and
\begin{align*}
b_{\alpha}(p;\Delta) ={\color{purple}(\log p)}a_{\alpha}(p;\Delta).
\end{align*}

For $p \leq e^{2 \pi \Delta}$, if $\Delta\alpha\gg 1$, we use the fact that
\begin{equation*}
|b_{\alpha}(p;\Delta)| \leq 1+\frac{1}{e^{2\pi\Delta\alpha}-1}.
\end{equation*}
If $\Delta\alpha = o(1)$, then we have
\begin{align*}
|b_{\alpha}(p;\Delta)| & \leq 1+ (\log p)\sum_{j=1}^{\infty} \bigg(  \frac{(j+1)}{\log p + 2 \pi j \Delta} e^{-2 \pi j \Delta\alpha} - \frac{(j+1) p^{2\alpha}}{2 \pi(j+2) \Delta-\log p} e^{-2 \pi  (j+2)\Delta\alpha} \bigg) \\
&= 2( \log p) (2 \pi \Delta - \log p) \sum_{j=1}^{\infty} \frac{(j+1) e^{-2 \pi j \Delta\alpha}}{(\log p+2 \pi j \Delta)(2 \pi(j+2) \Delta-\log p)}+O(1)\\
&\leq \frac12\sum_{j=1}^{\infty} \frac{e^{-2 \pi j\Delta \alpha}}{j }+O(1)\leq\frac12\log\frac{1}{\Delta\alpha}+O(1).
\end{align*}
%
Hence we can rewrite the bound for $b_{\alpha}(p;\Delta)$ as a single bound,
\begin{equation}
\label{improved_bd}
|b_{\alpha}(p;\Delta)| \leq b(\Delta) \Big( \log \frac{1}{\Delta \alpha} \Big)^{\gamma(\Delta)},
\end{equation} 
where $\gamma(\Delta)=0$ if $\Delta \alpha \gg 1$ and $\gamma(\Delta)=1$ if $\Delta \alpha=o(1)$, and where
\begin{equation}
\label{bn}
b(\Delta) = 
\begin{cases}
1+ \frac{1}{e^{2\pi \Delta \alpha}-1} & \mbox{ if } \Delta \alpha \gg 1,\\
1 & \mbox{ if } \Delta \alpha =o(1).
\end{cases}
\end{equation}

Let 
$$ \mathcal{T}_u = \Big\{ T \leq t \leq 2T : \max_{u \leq v \leq K} |P_{u,v}(t)| \leq \frac{\ell_u}{{\color{purple}2}ke^2}\Big\}.$$ 
For $T \leq t \leq 2T$, we have the following possibilities:
\begin{enumerate}
\item $t \notin \mathcal{T}_0$;
\item $t \in \mathcal{T}_u$ for all $u \leq K$. Denote this set by $\mathcal{T}'$;
\item There exists $0 \leq j \leq K-1$ such that $t \in \mathcal{T}_u$ for all $u \leq j$, but $t \notin \mathcal{T}_{j+1}$. Denote this set by $\mathcal{S}_j$.
\end{enumerate}

{\color{purple}If $t \notin \mathcal{T}_0$, then there exists $0 \leq v \leq K$ such that $|P_{0,v}(t)|>\ell_0/(2ke^2)$. Then 
	we have
	\begin{align}
		\int_{[T,2T] \setminus \mathcal{T}_0} & | \zeta(\tfrac12+\alpha+it)|^{-2k} \, dt \leq \int_T^{2T} \Big( \frac{2ke^2}{\ell_0} |P_{0,v}(t)| \Big)^{2s_0} | \zeta(\tfrac12+\alpha+it)|^{-2k}   \, dt \nonumber \\
		& \leq \Big( \frac{2ke^2}{\ell_0} \Big)^{2s_0}\Big( \frac{1}{1-(\log T)^{-2\alpha}} \Big)^{ \frac{(1+\varepsilon) k\log T}{\log \log T}}\int_T^{2T} |P_{0,v}(t)|^{2s_0} \, dt,
		\label{t0new}
	\end{align}
	by using the pointwise bound for the zeta-function in Lemma \ref{CClemma},
	$$|\zeta(\tfrac12+\alpha+it)|^{-1} \ll \Big( \frac{1}{1-(\log T)^{-2\alpha}} \Big)^{ \frac{(1+\varepsilon) \log T}{2 \log \log T}}.$$
We will now compute the moments of $P_{0,v}(t)$. We have
\begin{align*}
  P_{0,v}(t)^{s_0} = s_0! \sum_{\substack{p|n \Rightarrow p \leq T^{\beta_0} \\ \Omega(n) =s_0}} \frac{b_{\alpha}(n;\Delta_v) \nu(n)}{n^{1/2+\alpha+it}}.
\end{align*}
It then follows from Lemma \ref{harperresultreplace} that
\begin{align*}
\int_T^{2T} |P_{0,v}(t)|^{2s_0} \, dt&= \big(T+O(T^{\beta_0s_0})\big) (s_0!)^2 \sum_{\substack{p|n \Rightarrow p \leq T^{\beta_0} \\ \Omega(n)=s_0}} \frac{b_{\alpha}(n;\Delta_v)^2 \nu(n)^2}{n^{1+2\alpha}}\nonumber\\
&\ll T s_0! \bigg( \sum_{p \leq T^{\beta_0}} \frac{b_{\alpha}(p;\Delta_v)^2}{p^{1+2\alpha}} \bigg)^{s_0} \leq Ts_0! \Big(\log \frac{1}{\alpha}\Big)^{2s_0} (\log \log T^{\beta_0})^{s_0}.
\end{align*}
Hence combining with \eqref{t0new} we get
\begin{align*}
	&\int_{[T,2T] \setminus \mathcal{T}_0} |\zeta(\tfrac12+\alpha+it)|^{-2k} \, dt \ll  Ts_0! \Big( \frac{2 k e^2 \log \frac{1}{\alpha}}{\ell_0}\Big)^{2s_0}\Big( \frac{1}{1-(\log T)^{-2\alpha}} \Big)^{ \frac{(1+\varepsilon)k \log T}{ \log \log T}}(\log \log T^{\beta_0})^{s_0}\\
&\qquad\ll T \Big( \frac{1}{1-(\log T)^{-2\alpha}} \Big)^{ \frac{(1+\varepsilon)k \log T}{ \log \log T}}\nonumber \\
	& \qquad\qquad\times  \exp\bigg(-(2d-1)s_0 \log s_0+2s_0 \log \Big(2ke^{2} \Big(\log \frac{1}{\alpha}\Big) \sqrt{\log  \log T^{\beta_0}}  \Big) \bigg)=o(T),
\end{align*}
by Stirling's formula and the choice of $s_0$ in \eqref{sj}.}


Now assume that $t \in \mathcal{T}_0$. 
We have
\begin{equation*}
\int_{t \in \mathcal{T}_0} \powerzeta \, dt = \int_{t \in \mathcal{T}'} \powerzeta \, dt + \sum_{j=0}^{K-1} \int_{t \in \mathcal{S}_j} \powerzeta \, dt.
\end{equation*}
We first consider the term $\int_{t\in\mathcal{T}'}$.
From Lemma \ref{key_ineq}, we have that 
\begin{align}
\label{ineq}
&\log \frac{1}{|\zeta(\frac12+\alpha+it)|^{2k}}  \leq \frac{k \log \frac{t}{2\pi}}{ \pi \Delta} \log \frac{1}{1-e^{-2 \pi \Delta\alpha}}- {\color{purple}2k\Re\bigg( \sum_{p \leq e^{2 \pi \Delta}} \frac{b_{\alpha}(p;\Delta)}{p^{1/2+\alpha+it}}\bigg)}\nonumber \\
&\qquad\qquad {\color{purple}-k \Re\bigg(\sum_{p \leq\min\{e^{\pi \Delta},\log T\}} \frac{1}{p^{1+2 \alpha+2it}}\bigg)} + O \Big(  \frac{\Delta^2 e^{\pi \Delta}}{1+\Delta t} + \frac{\Delta \log (1+  \Delta\sqrt{t})}{\sqrt{t}}+1\Big).
 \end{align}
 We choose $\Delta=\Delta_K$. Note that the error term in \eqref{ineq} is $O(1)$. 
Hence
\begin{align}
&\int_{t \in \mathcal{T}'} \powerzeta \, dt\nonumber\\
&\qquad  \ll \Big(   \frac{1}{1-T^{-\beta_K\alpha}}\Big)^{\frac{2k}{\beta_K}} \int_{t \in \mathcal{T}'} \bigg(\prod_{u=0}^K \exp\big (2k{\color{purple}| P_{u,K}(t)|}\big)\bigg)\exp\bigg(k{\color{purple}\bigg| \sum_{p \leq \log T} \frac{1}{p^{1+2 \alpha+2it}}\bigg| }\bigg) \, dt.\label{t_cs}
\end{align}
\kommentar{We use the Cauchy-Schwarz inequality to separate the contribution from the primes and the primes square, and then 
\begin{align}
\int_{t \in \mathcal{T}'} \powerzeta \, dt &\ll \Big(   \frac{1}{1-T^{-\beta_K\alpha}}\Big)^{\frac{2k}{\beta_K}} \bigg(  \int_{t \in \mathcal{T}'}\prod_{u=0}^K \exp\big (4k P_{u,K}(t)\big)dt \bigg)^{1/2} \nonumber \\
&\quad \times  \bigg( \int_{t \in \mathcal{T}'} \exp \Big( 4k  \sum_{p \leq T^{\beta_K/2}} \frac{\cos(2t \log p) a_{\alpha}(p^2;\Delta_K)}{p^{1+2 \alpha}} \Big) \, dt \bigg)^{1/2} .
\end{align}}

We next consider the integral 
$$ \int_{t \in \mathcal{T}'} \prod_{u=0}^K \exp\big ({\color{purple}2k| P_{u,K}(t)|}\big) \, dt$$
{\color{purple}and will later sketch how to modify the arguments to obtain an upper bound for \eqref{t_cs}.} \hcom{I realise that I do not really understand what you said in this case last time. Did you mean to replace $|P|$ by $|P|^2$? If so, then I get the following:} {\color{purple} Since $t \in \mathcal{T}'$, we have that }
\begin{align}
\label{tk}
\int_{t \in \mathcal{T}'} \prod_{u=0}^K \exp\big ({\color{purple}2k| P_{u,K}(t)|}\big)\, dt &\ll \int_{t \in \mathcal{T}'} \prod_{u=0}^K E_{\ell_u} \big({\color{purple}2k| P_{u,K}(t)|}\big) \, dt\nonumber\\
&\leq \int_T^{2T}\prod_{u=0}^K E_{\ell_u}\big ({\color{purple}2k| P_{u,K}(t)|^2}\big) \, dt ,
\end{align}
by positivity. Now we have
{\color{purple}\begin{align*}
E_{\ell_u} \big(2k| P_{I_{u,K}}(t)|^2\big) &=\sum_{j_u=0}^{\ell_u}\frac{(2k)^{j_u}}{j_u!}\,\bigg| \sum_{\substack{p \in I_{u}}} \frac{  b_{\alpha}(p;\Delta_K)}{p^{1/2+\alpha+it}}\bigg|^{2j_u}=\sum_{j_u=0}^{\ell_u}(2k)^{j_u}j_u!\,\bigg| \sum_{\substack{p|n \Rightarrow p \in I_u \\ \Omega(n)=j_u}} \frac{  b_{\alpha}(n;\Delta_K)\nu(n)}{n^{1/2+\alpha+it}}\bigg|^{2},
\end{align*}
and we use this in \eqref{tk}. By Lemma \ref{harperresultreplace} and the fact that the intervals $I_u$ are disjoint, the right hand side of \eqref{tk} is bounded by
\begin{align*}
&\ll \Big(T+O\big(T^{\sum_{u=0}^K\beta_u\ell_u}\big)\Big) \prod_{u=0}^K \sum_{\substack{p|n \Rightarrow p \in I_u \\ \Omega(n) \leq \ell_u}} \frac{ (2k)^{\Omega(n)}(\Omega(n))! b_{\alpha}(n;\Delta_K)^2\nu(n)^2}{n^{1+2\alpha}}.
\end{align*}}
\hcom{The error is not a problem, but the power of $\log T$ seems off?}
\acom{I think we want the following:

In the integral, we have expressions involving $\exp ( \Re P)$. So if $|z| \leq \ell/e^2$, we will use the following inequalities:
\begin{align*}
e^{\Re (z)} = |e^z| \leq |e^z - E_{\ell}(z)|+|E_{\ell}(z)|.
\end{align*}
Now we have
$$|e^z-  E_{\ell}(z)| \leq \sum_{j=\ell+1}^{\infty} \frac{|z|^j}{j!},$$ and now we can proceed exactly as in the Sound-Maks paper (see Lemma $1$) to get that
$$ |e^z-  E_{\ell}(z)| \leq \frac{1}{16} e^{-\ell},$$ so
$$ e^{\Re(z)} \leq |E_{\ell}(z)| + \frac{1}{16} e^{-\ell}.$$ 
If $|E_{\ell}(z)| \leq \frac{15}{16}$, then we get that $e^{\Re(z)} \leq 1$, and then we can simply omit the contribution from those primes. Otherwise, we assume $|E_{\ell}(z)| > \frac{15}{16}$, in which case we get that
\begin{equation}
e^{\Re(z) } \leq |E_{\ell}(z)| \Big( 1+\frac{1}{15 e^{\ell}} \Big).\label{key}
\end{equation}
So now to bound the contribution from $t \in \mathcal{T}'$, we have
\begin{align*}
\int_{t \in \mathcal{T}'} \prod_{u=0}^K \exp\big (2k \Re P_{u,K}(t)|\big)\, dt &\ll \int_{t \in \mathcal{T}'} \prod_{u \in J}\Big| E_{\ell_u} (2k P_{u,K}(t)\big)\Big| \, dt,\nonumber\\
\end{align*}
where $J$ denotes the subset of $u$ for which we can apply \eqref{key} (if \eqref{key} doesn't hold, we simply bound the exponential by $1$). If $|E_{\ell}(u)|<1$, we omit it in the product again. Otherwise, we get 
\begin{align*}
\int_{t \in \mathcal{T}'} \prod_{u=0}^K \exp\big (2k \Re P_{u,K}(t)|\big)\, dt &\ll \int_{t \in \mathcal{T}'} \prod_{u \in J}\Big| E_{\ell_u} (2k P_{u,K}(t)\big)\Big|^2 \, dt,
\end{align*}
after a possible relabeling of $J$. It further follows that
\begin{align*}
\int_{t \in \mathcal{T}'} \prod_{u=0}^K \exp\big (2k \Re P_{u,K}(t)|\big)\, dt &\ll \int_{t \in \mathcal{T}'} \Big|  \sum_{\substack{n = \prod_{j \in J} n_j \\ \Omega(n_j) \leq \ell_j \\ p|n_j \Rightarrow p \in I_j}}\frac{b_{\alpha}(n;\Delta_K) \nu(n) (2k)^{\Omega(n)}}{n^{1/2+\alpha+it}}\Big|^2 \, dt.
\end{align*}
Using the large sieve inequality and the fact that $\nu(n)^2 \leq \nu(n)$, we get that 
\begin{align*}
\int_{t \in \mathcal{T}'} & \prod_{u=0}^K \exp\big (2k \Re P_{u,K}(t)|\big)\, dt \ll (T+T^{\sum \beta_u \ell_u})   \sum_{\substack{n = \prod_{j \in J} n_j \\ \Omega(n_j) \leq \ell_j \\ p|n_j \Rightarrow p \in I_j}} \frac{ b_{\alpha}(n;\Delta_K)^2 \nu(n) (2k)^{2\Omega(n)}}{n^{1+2\alpha}} \\
& \ll (T+T^{\sum \beta_u \ell_u}) \prod_{u \in J} \prod_{p \in I_j} \Big( 1-\frac{(2k)^2 b_{\alpha}(p;\Delta_K)^2}{p^{1+2\alpha}} \Big)^{-1} \\
& \ll (T+T^{\sum \beta_u \ell_u}) (\log T^{\beta_K})^{4k^2 b(\Delta_K)^2 ( \log \frac{1}{\Delta_K \alpha})^2}.
\end{align*}
}
\begin{align}\label{sumwithfn^2}
&T \prod_{u=0}^K \sum_{\substack{p|n \Rightarrow p \in I_u \\ \Omega(n) \leq \ell_u/2}} \frac{ (4k)^{2 \Omega(n)} b_{\alpha}(n;\Delta_K)^2\nu(n^2) f(n^2)  }{n^{1+2\alpha}}\nonumber\\
&\quad\leq T \prod_{u=0}^K \prod_{p \in I_u}\sum_{j=0}^\infty \frac{ (4k)^{2 j} b_{\alpha}(p;\Delta_K)^{2j}\nu(p^{2j}) f(p^{2j})  }{p^{(1+2\alpha)j}}\leq T \prod_{u=0}^K \prod_{p \in I_u} \bigg(1 - \frac{4k^2 b_{\alpha}(p;\Delta_K)^2}{p^{1+2\alpha}} \bigg)^{-1}\nonumber\\
&\quad\leq T  \prod_{p \leq T^{\beta_K}} \bigg(1 - \frac{4k^2 b_{\alpha}(p;\Delta_K)^2}{p} \bigg)^{-1} \ll T (\log T^{\beta_K})^{4k^2b(\Delta_K)^2},
\end{align}where in the last line we have used the fact that $\Delta_K \alpha \gg 1$ and the bounds \eqref{improved_bd}, \eqref{bn}.
Also, from Lemma \ref{harperresult} we have that the error term in \eqref{tk} is
\begin{align*} 
&\ll\prod_{u=0}^K \sum_{\substack{p|n \Rightarrow p \in I_u \\ \Omega(n) \leq \ell_u}} (4k)^{\Omega(n)}  |b_{\alpha}(n;\Delta_K)|\nu(n) n^{{\color{red}{-\alpha}}}\ll \prod_{u=0}^K \sum_{j=0}^{\ell_u}  \frac{(4k)^j}{j!} \bigg( \sum_{p \in I_u} p^{{\color{red}{-\alpha}}} \bigg)^j\\
& \ll \prod_{u=0}^K \frac{ (4k)^{\ell_u} T^{{\color{red}{(1-\alpha)}}\beta_u \ell_u}}{(\log T^{\beta_u})^{\ell_u}}=\frac{ (4k)^{\sum_{u=0}^K \ell_u} T^{{\color{red}{(1-\alpha) }}\sum_{u=0}^K \beta_u \ell_u}}{ \prod_{u=0}^K(\log T^{\beta_u})^{\ell_u}}.
\end{align*}
Note that the above is $o(T)$ with the choice of parameters in \eqref{betaj}, \eqref{k}, \eqref{sj} and \eqref{ell_j}. \hcom{Maybe when we specify the variables $\beta_j$, $s_j$ we should assert  that $\sum_{u=0}^K \beta_u \ell_u\leq 1-\varepsilon$?} It then follows that
\begin{align}
 \int_{t \in \mathcal{T}'}\prod_{u=0}^K \exp\big (4k P_{u,K}(t)\big) \, dt \ll T (\log T^{\beta_K})^{4k^2b(\Delta_K)^2}.\label{primes}
\end{align}
}
Now we consider the contribution from the primes square. We consider the integral 
$$ \int_{t \in \mathcal{T}'} \exp \Big( 4k  \sum_{p \leq T^{\beta_K/2}} \frac{\cos(2t \log p) a_{\alpha}(p^2;\Delta_K)}{p^{1+2 \alpha}} \Big) \, dt .$$ 
We proceed similarly as in \cite{harper}. First note that, as in \cite{harper}, we can truncate the sum over primes above at $p \leq \log T$. Let
$$P_m(t) = \sum_{2^m < p \leq 2^{m+1}} \frac{\cos(2t \log p) a_{\alpha}(p^2;\Delta_K)}{p^{1+2\alpha}},$$ and let
$$\mathcal{P}_m = \bigg\{ t \in [T,2T] : \, |P_m(t)|>2^{-m/10} \text{ but } |P_n(t)| \leq 2^{-n/10}, \,  \forall \, m< n \leq \frac{\log \log T}{\log 2} \bigg\}.$$
Similarly as in \cite{harper}, note that if $t$ does not belong to any of the sets $\mathcal{P}_m$, then $|P_n(t) | \leq 2^{-n/10}$ for all $n$ and then the contribution from the primes square is bounded by $O(1)$. As in \cite{harper}, we can also restrict attention to those $t \in \mathcal{T}' \cap \mathcal{P}(m)$, for $ m  \leq 2 \log \log \log T/\log 2$.
Then we have that
\begin{align}
 \int_{t \in \mathcal{T}'} & \exp \Big( 4k  \sum_{p \leq \log T} \frac{\cos(2t \log p) a_{\alpha}(p^2;\Delta_K)}{p^{1+2 \alpha}} \Big) \, dt \nonumber \\
 & \leq \sum_{m=0}^{2 \log \log \log T/\log 2} \int_{t\in\mathcal{T}' \cap \mathcal{P}_m}  \exp \Big( 4k  \sum_{p \leq \log T} \frac{\cos(2t \log p) a_{\alpha}(p^2;\Delta_K)}{p^{1+2 \alpha}} \Big) \, dt +O(1) . \label{summ}
\end{align}

Suppose that $t \in \mathcal{T}' \cap \mathcal{P}_m$. Then
$$  \sum_{p \leq \log T} \frac{\cos(2t \log p) a_{\alpha}(p^2;\Delta_K)}{p^{1+2 \alpha}} =  \sum_{p \leq 2^{m+1}} \frac{\cos(2t \log p) a_{\alpha}(p^2;\Delta_K)}{p^{1+2 \alpha}} + O(1) \ll  \log m,$$ and then 
\begin{align}
 \int_{t\in\mathcal{T}' \cap \mathcal{P}_m} &  \exp \Big( 4k  \sum_{p \leq \log T} \frac{\cos(2t \log p) a_{\alpha}(p^2;\Delta_K)}{p^{1+2 \alpha}} \Big) \, dt \leq \exp \Big(  O ( k \log m) \Big) \int_{t \in \mathcal{T}' \cap \mathcal{P}_m} \, dt \nonumber \\
 & \leq \exp \Big(  O ( k \log m) \Big) \int_{T}^{2T} \Big( 2^{m/10} P_m(t)\Big)^{2r_m} \, dt, \label{interm'}
 \end{align}
 where we choose $r_m = [2^{3m/4}]$. We compute the moments of $P_m(t)$ similarly as before using Lemma \ref{harperresult}, and get that
 \begin{align*}
 \int_{T}^{2T} &\Big( 2^{m/10} P_m(t)\Big)^{2r_m} \, dt \\
 &\ll T 2^{m r_m/5} \frac{(2r_m)!}{2^{r_m}r_m!} \bigg( \sum_{2^m < p \leq 2^{m+1}} \frac{c_{\alpha}(p;\Delta_K)^2 }{ p^{2+4 \alpha}}   \bigg)^{r_m} + O \Bigg( \bigg( \sum_{2^m <p \leq 2^{m+1}}  \frac{|c_{\alpha}(p;\Delta_K)|}{p^{{\color{purple}{1/2}}+2\alpha}} \bigg)^{2r_m} \Bigg),
 \end{align*}
 where $c_{\alpha}(n;\Delta)$ is a completely multiplicative function in the first variable and
\begin{align*}
c_{\alpha}(p;\Delta) =a_{\alpha}(p^2;\Delta).
\end{align*}
We use the inequality
 $$|c_{\alpha}(p ; \Delta_K)| \leq B, $$ for some $B>0$, and then combining the previous two equations, we have
\begin{align*}
\int_{T}^{2T} &\Big( 2^{m/10} P_m(t)\Big)^{2r_m} \, dt \ll T \exp (-2^{3m/4})+ O \big(B^{2r_m} 2^{({\color{purple}{1}}-4\alpha)mr_m} \big).
\end{align*}
Combining \eqref{summ} and \eqref{interm'}, it then follows that
\begin{align*}
 \int_{t \in \mathcal{T}'} & \exp \Big( 4k  \sum_{p \leq \log T} \frac{\cos(2t \log p) a_{\alpha}(p^2;\Delta_K)}{p^{1+2 \alpha}} \Big) \, dt  \nonumber \\
 & \ll \sum_{m=0}^{2\log \log \log T/\log 2} \exp \Big(O(k \log m) \Big)  \Big( T \exp (-2^{3m/4})  + O \big(B^{2r_m} 2^{({\color{purple}{1}}-4\alpha)mr_m} \big) \Big)\nonumber\\
 & \ll T + \exp \Big( C (\log \log T)^{3/2} \Big) \ll T,
\end{align*}
for some $C>0$. Together with \eqref{t_cs} and \eqref{primes}, we hence get that
\begin{align}
\int_{t \in \mathcal{T}'}  |\zeta(\tfrac12+\alpha+it)|^{-2k} \, dt& \ll T \Big(  \frac{1}{1-T^{-\beta_K\alpha}}\Big)^{\frac{2k}{ \beta_K}}  (\log T^{\beta_K})^{2k^2b(\Delta_K)^2} \nonumber \\
&\ll 
\begin{cases}
T (\log T)^{2k^2(1+\frac{1}{T^{c\alpha}-1})^2} & \mbox{ if } \alpha \asymp(\log T)^{-1}, \\
T \big(\frac{ \log (\alpha\log T)}{\alpha}\big)^{2k^2+\varepsilon} & \mbox{ otherwise.}\end{cases} \label{bound_tk}
\end{align}

\kommentar{Now assume that $t \in \mathcal{S}_{j}$ for some $0\leq j\leq K-1$. Then we use \eqref{ineq} and choose $\Delta=\Delta_j$. Again the error term in \eqref{ineq} is $O(1)$. It follows that there exists some $j+1\leq v \leq K$ such that $|P_{j+1,v}(t)|>\ell_{j+1}/(4ke^2)$, and hence 
\begin{align}\label{contributionSj}
&\int_{t \in \mathcal{S}_{j}} |\zeta(\tfrac12+\alpha+it)|^{-2k} \, dt\nonumber \\
&\quad\ll \Big(  \frac{1}{1-T^{-\beta_j\alpha}} \Big)^{\frac{2k }{ \beta_j}}  \int_{t \in \mathcal{S}_j}\bigg( \prod_{u=0}^j \exp\big (2k P_{u,j}(t)\big)\bigg) \exp \Big( 2k \sum_{p \leq T^{\beta_j/2}} \frac{\cos(2t \log p)a_{\alpha}(p^2;\Delta_j)}{p^{1+2\alpha}} \Big) \, dt\nonumber\\
&\quad\ll \Big(  \frac{1}{1-T^{-\beta_j\alpha}} \Big)^{\frac{2k }{ \beta_j}}  \int_{T}^{2T}\Big(\frac{4ke^2}{\ell_{j+1}} |P_{j+1,v}(t)| \Big)^{s_{j+1}} \bigg(\prod_{u=0}^j \exp\big (2k P_{u,j}(t)\big)\bigg)\nonumber\\
&\qquad\qquad\qquad\times\exp \Big( 2k \sum_{p \leq T^{\beta_j/2}} \frac{\cos(2t \log p)a_{\alpha}(p^2;\Delta_j)}{p^{1+2\alpha}} \Big) \, dt\nonumber\\
&\quad \ll  \Big(  \frac{1}{1-T^{-\beta_j\alpha}} \Big)^{\frac{2k }{ \beta_j}}\Big(\frac{4ke^2}{\ell_{j+1}} \Big)^{s_{j+1}}  \bigg(\int_{T}^{2T} P_{j+1,v}(t)^{2s_{j+1}}\prod_{u=0}^j \exp\big (4k P_{u,j}(t)\big)dt\bigg)^{1/2}\nonumber\\
&\qquad\qquad\qquad\times\bigg(\int_{T}^{2T}\exp \Big( 4k \sum_{p \leq T^{\beta_j/2}} \frac{\cos(2t \log p)a_{\alpha}(p^2;\Delta_j)}{p^{1+2\alpha}} \Big) \, dt\bigg)^{1/2}\nonumber\\
&\quad \ll  \Big(  \frac{1}{1-T^{-\beta_j\alpha}} \Big)^{\frac{2k }{ \beta_j}}\Big(\frac{4ke^2}{\ell_{j+1}} \Big)^{s_{j+1}}  \bigg(\int_{T}^{2T} P_{j+1,v}(t)^{2s_{j+1}}\prod_{u=0}^j E_{\ell_u}\big (4k P_{u,j}(t)\big)dt\bigg)^{1/2}\\
&\qquad\qquad\qquad\times\bigg(\int_{T}^{2T} \exp \Big( 4k \sum_{p \leq T^{\beta_j/2}} \frac{\cos(2t \log p)a_{\alpha}(p^2;\Delta_j)}{p^{1+2\alpha}} \Big) \, dt\bigg)^{1/2},\nonumber
\end{align}
by the Cauchy-Schwarz inequality.

\acom{We proceed exactly as before to bound the contribution from primes square by $T$.}

We next bound the integral
\begin{align*}
&\int_{T}^{2T} P_{j+1,v}(t)^{2s_{j+1}}\prod_{u=0}^j E_{\ell_u}\big (4k P_{u,j}(t)\big)dt \\
&\qquad=(2s_{j+1})!\int_T^{2T} \sum_{\substack{p|m \Rightarrow p \in I_{j+1} \\ \Omega(m)=2s_{j+1}}} \frac{ b_{\alpha}(m;\Delta_v)\nu(m)}{m^{1/2+\alpha}}  \prod_{p|m} \cos(t \log p)  \\
&\qquad\qquad\qquad \times\prod_{u=0}^j \sum_{\substack{p| n \Rightarrow p \in I_u \\ \Omega(n) \leq \ell_u}} \frac{(4k)^{\Omega(n)}b_{\alpha}(n;\Delta_j)\nu(n) }{n^{1/2+\alpha}} \prod_{p|n} \cos(t \log p) \, dt.
\end{align*}
Using Lemma \ref{harperresult} again, it follows that the above is
\begin{align}
&T(2s_{j+1})! \sum_{\substack{p|m \Rightarrow p \in I_{j+1} \\ \Omega(m)=s_{j+1}}}\prod_{u=0}^j \sum_{\substack{p| n \Rightarrow p \in I_u \\ \Omega(n) \leq \ell_u/2}} \frac{(4k)^{2\Omega(n)} b_{\alpha}(m;\Delta_v)^2b_{\alpha}(n;\Delta_j)^2\nu(m^2)\nu(n^2)f(m^2n^2) }{(mn)^{1+2\alpha}} \label{et}\\
&\qquad+O\bigg((2s_{j+1})!\sum_{\substack{p|m \Rightarrow p \in I_{j+1} \\ \Omega(m)=2s_{j+1}}} \prod_{u=0}^j \sum_{\substack{p| n \Rightarrow p \in I_u \\ \Omega(n) \leq \ell_u}} (4k)^{\Omega(n)} |b_{\alpha}(m;\Delta_v)b_{\alpha}(n;\Delta_j)|\nu(m)\nu(n) (mn)^{{\color{purple}{-\alpha}}}\bigg).\nonumber
\end{align}
On one hand, as $f(m^2n^2) \leq f(n^2)$ and $\nu(m^2) \leq \nu(m)/2^{\Omega(m)}$, similarly to \eqref{sumwithfn^2} the main term above is 
\begin{align}
& \ll T\frac{(2s_{j+1})!}{2^{s_{j+1}}s_{j+1}!}\bigg( \sum_{p \in I_{j+1}} \frac{b_{\alpha}(p;\Delta_v)^2}{p^{1+2\alpha}} \bigg)^{s_{j+1}} \prod_{p \leq T^{\beta_j}} \Big(1 - \frac{4k^2 b_{\alpha}(p;\Delta_j)^2}{p} \Big)^{-1}\nonumber\\
& \ll T\frac{(2s_{j+1})!b(\Delta_j)^{2s_{j+1}}\big(\log\frac{1}{\Delta_j \alpha}\big)^{2s_{j+1}\gamma(\Delta_j)}}{2^{s_{j+1}}s_{j+1}!} \Big( \log \frac{\beta_{j+1}}{\beta_j} \Big)^{s_{j+1}}( \log T^{\beta_j})^{4k^2 b(\Delta_j)^2 (\log \frac{1}{\Delta_j \alpha})^{2 \gamma(\Delta_j)}},\label{tb}
\end{align}
where we have used the prime number theorem, equations \eqref{improved_bd}, \eqref{bn} and the fact that $b(\Delta_j)(\log\frac{1}{\Delta_j\alpha})^{\gamma(\Delta_j)}$ is decreasing as a function of $j$. On the other hand, similarly as in the case $t \in \mathcal{T}'$, the error term in \eqref{et} is
\begin{align*}
\ll b(\Delta_j)^{2s_{j+1}} \Big( \log \frac{1}{\Delta_j \alpha} \Big)^{2s_{j+1} \gamma(\Delta_j)} \frac{ (4k)^{\sum_{u=0}^j \ell_u} T^{({\color{purple}{1}}-\alpha) (\sum_{u=0}^j \beta_u \ell_u+ 2\beta_{j+1} s_{j+1})}}{\big(\prod_{u=0}^j (\log T^{\beta_u})^{\ell_u}\big)(\log T^{\beta_{j+1}})^{2s_{j+1}}},
\end{align*}
which is dominated by \eqref{tb} given the choices of parameters \eqref{adr}, \eqref{betaj} and \eqref{sj}. \hcom{This doesn't look right to me. So I checked one of the previous versions, and it seems that we made a mistake in equation (36) in neg\_mom\_3 version. The factor $T^{1/2}$ from contribution of squares was missing? I have a similar question for the inequality just before (5.18) in our ratios paper with Jon.} It hence follows from \eqref{contributionSj} and \eqref{tb}  that
\begin{align}
\int_{t \in \mathcal{S}_{j}} & |\zeta(\tfrac12+\alpha+it)|^{-2k} \, dt \ll  T \Big(  \frac{1}{1-T^{-\beta_j\alpha}} \Big)^{\frac{2k }{ \beta_j}} \exp \bigg( - (d-1/2)s_{j+1}\log s_{j+1} \label{sj_again} \\
& +s_{j+1} \log \Big(2^{5/2} k e^{3/2} b(\Delta_j) \Big(\log \frac{1}{\Delta_j \alpha}\Big)^{\gamma(\Delta_j)}\sqrt{\log r}\Big)\bigg) ( \log T^{\beta_j})^{2k^2 b(\Delta_j)^2 (\log \frac{1}{\Delta_j \alpha})^{2\gamma(\Delta_j)}}. \nonumber
\end{align}

In the equation above, note that if $\beta_j \alpha\log T \geq \varepsilon$, then 
$$\frac{2k}{\beta_j} \log \frac{1}{1-T^{-\beta_j\alpha}} \leq  \frac{2k}{\beta_j} \log \frac{1}{1-e^{-\varepsilon}}<(d-1/2)  s_{j+1} \log s_{j+1},$$ so the contribution in this case will be $o(T)$.

Now if $\beta_j\alpha \log T < \varepsilon$, then 
\[
\log  \frac{1}{1-T^{-\beta_j\alpha}}<\log\frac{1}{\alpha}-\log(\beta_j\log T)+\log\frac{1}{1-\varepsilon/2},
\]
and hence from \eqref{sj_again} we get that 
\begin{align*}
&\int_{t \in \mathcal{S}_{j}}   |\zeta(\tfrac12+\alpha+it)|^{-2k} \, dt\\
&\qquad\quad \ll T \exp \bigg(\frac{\log \log T}{\beta_j} \Big(2ku - \frac{a(d-1/2)}{3r} \Big) + \frac{\log ( \beta_j \log T )}{\beta_j} \Big( \frac{a(d-1/2)}{3r}-2k \Big) \nonumber   \\
&\qquad\qquad\qquad\qquad +s_{j+1}\log \Big(C\Big(\log \frac{1}{\Delta_j \alpha}\Big)^{\gamma(\Delta_j)}\Big)   \bigg)( \log T^{\beta_j})^{2k^2 b(\Delta_j)^2 (\log \frac{1}{\Delta_j \alpha})^{2\gamma(\Delta_j)}}
\end{align*}
for some constant $C>0$, where $u = \log (1/\alpha)/\log \log T$. 
Since $\alpha \gg \max \{ (\log T)^{-1} , (\log T)^{-\frac{1}{6k}+\varepsilon} \}$, we obtain that 
\begin{align*}
\int_{t \in \mathcal{S}_{j}}   |\zeta(\tfrac12&+\alpha+it)|^{-2k} \, dt \ll T \exp \Bigg(  \frac{\log(1/\beta_j)}{\beta_j} \Big(2ku - \frac{a(d-1/2)}{3r} \Big)\\
& + \frac{a}{3r \beta_j} \log \Big(C\Big(\log \frac{1}{\Delta_j \alpha}\Big)^{\gamma(\Delta_j)}\Big) \Bigg)( \log T^{\beta_j})^{2k^2 b(\Delta_j)^2 (\log \frac{1}{\Delta_j \alpha})^{2\gamma(\Delta_j)}},
\end{align*}
and keeping in mind the choice of parameters \eqref{adr}, it follows that this is bounded by 
\begin{align*}
\ll & T \exp \Bigg(- k \varepsilon \frac{\log(1/\beta_j)}{\beta_j}  + \frac{a}{3r \beta_j}  \log \Big(C(\log \frac{1}{\Delta_j \alpha})^{\gamma(\Delta_j)}\Big) \Bigg)( \log T^{\beta_j})^{2k^2 b(\Delta_j)^2 (\log \frac{1}{\Delta_j \alpha})^{2\gamma(\Delta_j)}} \\
& \ll \begin{cases}
T (\log T)^{2k^2} & \mbox{ if } \alpha \asymp(\log T)^{-1}, \\
T \big(\frac{ \log (\alpha\log T)}{\alpha}\big)^{2k^2} & \mbox{ otherwise.}\end{cases} 
\end{align*}

\acom{The above is not correct if for example $\alpha=1/\log T$. We do get the bound we need when $\alpha \gg \frac{1}{(\log T)^{1+\varepsilon}}$ but I'm not sure if we can go down to $1/\log T$. Need to fix this.

I think we can get the bound we want if $\alpha \gg \exp(4k(\log \log \log T)^2)/\log T$, and if $1/\log T \ll \alpha = o ( \exp(4k(\log \log \log T)^2)/\log T)$, then we can get the bound
$$\ll \exp \Big( C (\log \log \log T)^2 \log \log T \Big).$$
Please double-check!}

\kommentar{\acom{Old:and by choosing 
$$a=\frac{2(1-5k\varepsilon)}{1-4k\varepsilon}, \,  d = \frac{1-3k\varepsilon}{1-2k\varepsilon}, \, r = \frac{1-5k\varepsilon}{1-6k\varepsilon},$$ it follows that the contribution above is also $o(T)$.
\acom{If $6k(1+\varepsilon) \geq 0$, then the contribution above is $o(T)$. Otherwise, if $6k(1+\varepsilon) <1$, then we pick all the parameters similarly as before, with the exception of $\beta_K$ which we choose $\beta_K=c$ for $c$ as in \eqref{condition_c}. Then note that 
$$ \exp \Big( \frac{\log ( \beta_j \log T )}{\beta_j} \Big( \frac{a(d-1/2)}{r(3-2\alpha)}-2k \Big) \Big) = O(\log \log T),$$ and 
\begin{align*}
\exp \Big( \frac{2k}{\beta_j} \log \frac{1}{\alpha} - \frac{a(d-1/2)}{r(3-2\alpha) \beta_j} \log \log T+s_{j+1}\log \Big(2^{5/2}kAe^{3/2} \log \frac{1}{\alpha} \Big) \Big) = o(1).
\end{align*}
Then the contribution from $t \in \mathcal{S}_j$ in this case will be bounded by 
$$\int_{t \in \mathcal{S}_j} \powerzeta \, dt \ll \exp \Big( C (\log \log T) \Big(\log \frac{1}{\alpha \log T } \Big)^2 \Big),$$ for some $C>0$. A similar bound holds for the integral over $\mathcal{T}'$. }}}
\kommentar{\acom{We need to add some extra explanation here. If $\alpha \log T \to \infty$ we pick the parameters as before. If $\alpha \gg 1/\log T$ and $\alpha \log T \nrightarrow \infty$, then we choose everything the same, except for $\beta_K$ which we choose $\beta_K=c$ for $c$ as in \eqref{condition_c}. If $\alpha= o(1/\log T)$, then we choose $\beta_K = (\log T)^{ A/B-1-\varepsilon},$ where
$$A =  \frac{a(d-1/2)}{r(3-2\alpha)} -2k \frac{\log(1/\alpha)}{\log \log T}, B =  \frac{a(d-1/2)}{r(3-2\alpha)} -2k .$$

Note that if $B\leq 0$, then it easily follows that the contribution is $o(T)$. Otherwise, if $B>0$, then note that if $\log(1/\alpha)/\log \log T<1$ then the conclusion again follows. If $\log(1/\alpha) /\log \log T \geq 1$, then if we let 
$$f(x) =B \frac{\log(x \log T)}{x}-A \frac{\log \log T}{x}+ \frac{a}{r x} \Big(2^{5/2}kAe^{3/2} \log \frac{1}{\alpha} \Big)  ,$$ then $f(x)$ is increasing for $x \leq \beta_K$ and it then follows that the contribution from $t \in \mathcal{S}_j$ is $o(T)$. }
{\color{purple} Go back to the contribution of $\mathcal{S}_K$ with the new defintions of $\beta_K$.} }

\kommentar{Now the error term coming from Proposition $2$ in \cite{harper} will be
\begin{align*}
 \Big( \frac{1}{1-T^{-\beta_j\alpha}}  \Big)  ^{\frac{2k}{\beta_j}} & \sum_{\substack{ p |n_r \Rightarrow p \in I_r \\ \Omega(n_r) \leq \ell_r}} (2k)^{\Omega(n_r)} \nu(n_r) b_{\alpha}(n_r;\Delta_j) n_r^{1-\alpha} \\
 & \times \frac{ ((2ke^2)^{s_{j+1}} (s_{j+1})! }{\ell_{j+1}^{s_{j+1}}}\sum_{\substack{p|n_{j+1} \Rightarrow p \in I_{j+1} \\ \Omega(n_{j+1})=s_{j+1}}}  \nu(n_{j+1}) b_{\alpha}(n_{j+1};\Delta_u) n_{j+1}^{1-\alpha}.
\end{align*}
Using \eqref{bd2}, as before, we will get that the error term above is 
\begin{align*}
\ll  & \Big( \frac{1}{1-T^{-\beta_j(\alpha-1/2)}}  \Big)  ^{\frac{2k}{\beta_j}} \frac{ (2k A \log \frac{1}{\alpha-1/2})^{\sum_{r=0}^j \ell_r} T^{(2-\alpha) \sum_{r=0}^j \beta_r \ell_r}}{ \prod_{r=0}^j \beta_r^{\ell_r} (\log T)^{\sum_{r=0}^j \ell_r}} \Big(\frac{2ke^2 A \log \frac{1}{\alpha-1/2}}{\ell_{j+1}} \Big)^{s_{j+1}} \frac{T^{(2-\alpha) \beta_{j+1} s_{j+1}}}{ \beta_{j+1}^{s_{j+1}} (\log T)^{s_{j+1}}} \\
& \ll  \Big( \frac{1}{1-T^{-\beta_j(\alpha-1/2)}}  \Big)  ^{\frac{2k}{\beta_j}}  \frac{ (2k e^2 A \log \frac{1}{\alpha-1/2})^{\sum_{r=0}^j \ell_r+s_{j+1}} T^{(2-\alpha)( \sum_{r=0}^j \beta_r \ell_r+\beta_{j+1} s_{j+1})}}{ \prod_{r=0}^{j} \beta_r^{\ell_r} \beta_{j+1}^{s_{j+1}} (\log T)^{\sum_{r=0}^j \ell_r+s_{j+1}}} \\
& \times \exp \Big(- s_{j+1} (d-1/2) \log s_{j+1} \Big) .
\end{align*}
Using the same argument as above and given the choice of parameters \eqref{param}, the error term above is $o(T)$.

\acom{Should get
\begin{align*}
\ll  \Big( \frac{1}{1-T^{-\beta_j\alpha}}  \Big)  ^{\frac{2k}{\beta_j}}  \frac{ (4k e^2 A \log \frac{1}{\alpha})}{\ell_{j+1}}^{s_{j+1}} \frac{T^{(\frac{3}{2}-\alpha) \beta_{j+1} s_{j+1}+ \frac{1}{2} (\frac{3}{2}-\alpha) \sum_{r=0}^j \beta_r \ell_r} (4k)^{\frac{1}{2} \sum_{r=0}^j \ell_r}}{(\log T^{\beta_{j+1}})^{s_{j+1}} \prod_{r=0}^j (\log T^{\beta_r})^{\frac{\ell_r}{2}}}
\end{align*}
With the choices of parameters, the above is negligible.}
}


\subsection{The range $(\log T)^{-1}\ll\alpha =o\big( (\log T)^{-\frac{1}{6k}+\varepsilon}\big)$} In this case we have $6k(1+\varepsilon) > 1$. \hcom{$k\geq 1/6$?} 

Let $$I_{1,0}= (1, T^{\beta_{1,0}}],\ I_{1,1}= (T^{\beta_{1,0}}, T^{\beta_{1,1}}],\ \ldots ,\ I_{1,K_1} = (T^{\beta_{1,K_1-1}}, T^{\beta_{1,K_1}}]$$ for a sequence $\beta_{1,0},\ldots,\beta_{1,K_1}$ chosen as follows:
\[
\beta_{1,0} =\frac{2k \log \frac{1}{\alpha}+ \frac{(1-\varepsilon)(d-1/2)}{3} \log \log T-\frac{a(d-1/2)}{3r} \log \log T}{ \frac{(1+\varepsilon) k \log T}{\log \log T} \big( \log \frac{1}{\alpha} \big)},\qquad  \beta_{1,j} = r^j \beta_{1,0},\qquad 1\leq j\leq K_1,
\]
where $a, d, r$ are as in \eqref{adr}. Here $K_1$ is chosen such that $\beta_{1,K_1}= c,$ for some constant $c$ such that
\begin{equation}
c^{1-d} \leq \frac{2(2-a-k\varepsilon)(r^{1-d}-1)}{3^{1-d}a^d}.
\label{condition_c}
\end{equation}
\acom{Double check.}
Note that the condition above on $c$ ensures that the error \eqref{error} in bounding the term similar to \eqref{et}  is negligible. We consider the sequences $(\ell_{1,j})$ and $(s_{1,j})$ such that 
$$s_{1,0} = \Big[ \frac{1-\varepsilon}{3 \beta_{1,0}}\Big],\qquad s_{1,j} =  \Big[ \frac{a}{3 \beta_{1,j}}\Big], \qquad 1\leq j\leq K_1, $$ 
and 
$$\ell_{1,j} = 2\Big[\frac{s_{1,j}^d}{2}\Big],\qquad 0\leq j\leq K_1.$$

We let $T^{\beta_{1,j}}= e^{2 \pi \Delta_{1,j}}$ for every $0\leq j\leq K_1$  and let 
$$P_{u,v}(t)=\sum_{ p\in I_{1,u}} \frac{\cos(t \log p) b_{\alpha}(p;\Delta_{1,v})}{p^{1/2+\alpha}}.$$
Let
$$ \mathcal{T}_{1,u} = \Big\{ T \leq t \leq 2T : \max_{u \leq v \leq K_1} |P_{u,v}(t)| \leq \frac{\ell_{1,u}}{4ke^2}\Big\}.$$
Similarly as in the previous case, if $t \notin \mathcal{T}_{1,0}$, then 
\begin{align}
&\int_{[T,2T] \setminus \mathcal{T}_{1,0}} |\zeta(\tfrac12+\alpha+it)|^{-2k} \, dt \ll T \Big( \frac{1}{1-(\log T)^{-2\alpha}} \Big)^{ \frac{(1+\varepsilon)k \log T}{ \log \log T}}\exp\bigg(-(d-1/2)s_{1,0} \log s_{1,0}\nonumber \\
& \qquad\qquad\qquad\qquad  +s_{1,0} \log \Big(2^{5/2}  ke^{3/2} b(\Delta_{1,0})  \Big(\log \frac{1}{ \Delta_{1,0} \alpha}\Big)^{\gamma(\Delta_{1,0})} \sqrt{\log  \log T^{\beta_{1,0}}}  \Big) \bigg)\nonumber  \\
& \qquad \ll T \exp \bigg( \frac{(1+\varepsilon) k \log T}{ \log \log T} \Big( \log \frac{1}{\alpha} \Big) -(d-1/2) s_{1,0}\log s_{1,0} \nonumber \\
& \qquad\qquad\qquad\qquad  +s_{1,0} \log \Big(2^{5/2}  ke^{3/2}b(\Delta_{1,0}) \Big(\log \frac{1}{ \Delta_{1,0} \alpha}\Big)^{\gamma(\Delta_{1,0})} \sqrt{\log  \log T^{\beta_{1,0}}}  \Big) \bigg).
 \label{not_t0'}
\end{align}

Now assume that $t \in \mathcal{T}_{1,0}$. Again we have
\begin{equation*}
\int_{t \in \mathcal{T}_{1,0}} \powerzeta \, dt = \int_{t \in \mathcal{T}_1'} \powerzeta \, dt + \sum_{j=0}^{K_1-1} \int_{t \in \mathcal{S}_{1,j}} \powerzeta \, dt,
\end{equation*}
where $\mathcal{T}_1'$ and $\mathcal{S}_{1,j}$ are defined analogously as before. We proceed as in the previous case and get that (see equation \eqref{bound_tk})
\begin{equation}
\int_{t \in \mathcal{T}_1'} |\zeta(1/2+\alpha+it)|^{-2k} \, dt \ll T \Big(  \frac{1}{1-T^{-\beta_{1,K_1}\alpha}}\Big)^{\frac{2k}{ \beta_{1,K_1}}}  (\log T^{\beta_{1,K_1}})^{2k^2b(\Delta_{1,K_1})^2}, 
\end{equation}
where we have used the fact that $\Delta_{1,K_1} \alpha \gg 1$, and recall that
$$b(\Delta_{1,K_1}) = 1+ \frac{1}{e^{2 \pi \Delta_{1,K_1} \alpha}-1}.$$

Furthermore, similarly to equation \eqref{cont_sj'}, we get that
\begin{align}
& \int_{t \in \mathcal{S}_{1,j}}  \powerzeta  \ll T \Big( \frac{1}{1-T^{-\beta_{1,j}\alpha}}  \Big)  ^{\frac{2k}{\beta_{1,j}}} \Big( \log T^{\beta_{1,j}} \Big)^{2k^2 b(\Delta_{1,j})^2 (\log \frac{1}{ \Delta_{1,j} \alpha})^{2 \gamma(\Delta_{1,j})}} \nonumber \\
& \times  \exp \Big(-(d-1/2) s_{1,j+1} \log s_{1,j+1} \Big)  \exp \Big(s_{1,j+1}\log \Big(2^{3/2}ke^{3/2}b (\Delta_{1,j})\Big( \log \frac{1}{\Delta_{1,j+1} \alpha} \Big)^{\gamma(\Delta_{1,j})}\Big) \Big) \nonumber \\
& \times  \Big( \log \beta_{1,j+1}/\beta_{1,j} \Big)^{s_{1,j+1}/2} \ll T \exp \Bigg( \frac{2k}{\beta_{1,j}} \log \frac{1}{\alpha} - \frac{ a(d-1/2)}{r \beta_{1,j}(3-2\alpha)} \log \log T \nonumber \\
& + \frac{ \log ( \beta_{1,j} \log T)}{ \beta_{1,j}} \Big(\frac{a(d-1/2)}{r(3-2\alpha)}-2k  \Big)+\frac{a(d-1/2)}{r \beta_{1,j}(3-2\alpha)} \log \frac{r(3-2\alpha)}{2} \Bigg)\Big( \log T^{\beta_{1,j}} \Big)^{2k^2 b(\Delta_{1,j})^2 (\log \frac{1}{ \Delta_{1,j} \alpha})^{2\gamma(\Delta_{1,j})}} \nonumber \\
& \times   \exp \Big(s_{1,j+1}\log \Big(2^{3/2}ke^{3/2}b (\Delta_{1,j})\Big( \log \frac{1}{\Delta_{1,j} \alpha} \Big)^{\gamma(\Delta_{1,j})}\Big) \Big). \label{s1j'}
\end{align}
With our choices of parameters, it follows that
$$ \exp \Big(\frac{ \log ( \beta_{1,j} \log T)}{ \beta_{1,j}} \Big(\frac{a(d-1/2)}{r(3-2\alpha)}-2k  \Big)   \Big)= T^{O \Big( \frac{ \log \log \log T}{\log \log T} \Big)}$$ and 
$$  \exp \Big(s_{1,j+1}\log \Big(2^{3/2}ke^{3/2}b (\Delta_{1,j})\Big( \log \frac{1}{\Delta_{1,j} \alpha} \Big)^{\gamma(\Delta_{1,j})}\Big) \Big) = T^{O \Big( \frac{ \log \log \log T}{\log \log T} \Big)},$$ and
$$\exp \Big(\frac{a(d-1/2)}{r \beta_{1,j}(3-2\alpha)} \log \frac{r(3-2\alpha)}{2}  \Big) = T^{O\Big(\frac{1}{\log \log T}\Big)}.$$
Combining the three equations above, \eqref{not_t0'} and \eqref{s1j'}, it follows that 
\begin{equation}
\label{step1}
\frac{1}{T}  \int_T^{2T} \powerzeta \, dt \ll T^{ (1+2\varepsilon) k u \Big(1- \frac{(1-\varepsilon) (d-1/2)}{(3-2\alpha)(2k u + \frac{d-1/2}{3-2\alpha} (1-\varepsilon-\frac{a}{r}))  } \Big)},
 \end{equation}
 where $u= \log(1/\alpha)/\log \log T.$ Let $c_1=1+2\varepsilon$ and
 let 
 $$y = 1- \frac{(1-\varepsilon) (d-1/2)}{(3-2\alpha)(2k u + \frac{d-1/2}{3-2\alpha} (1-\varepsilon-\frac{a}{r}))  }.$$
 We choose $\beta_{2,0}$ to be the solution to 
 $$ \frac{ \log(1/x)}{x} \Big( 2k u - \frac{a(d-1/2)}{r (3-2\alpha)}+\frac{(1-\varepsilon) (d-1/2)}{3-2\alpha} \Big)= c_1 ku y \log T.$$
 As before, we let $$s_{2,0} =2 \Big[ \frac{1-\varepsilon}{2 \beta_{2,0}(3-2\alpha)}\Big] ,\ell_{2,0}=2[s_{2,0}^d/2]$$ and for $j \geq 1$,
$$ \beta_{2,j} = r^j \beta_{2,0} \, , s_{2,j} = 2 \Big[ \frac{a}{2 \beta_{2,j} (3-2\alpha)}\Big] \,, \ell_{2,j} = 2[s_{2,j}^d/2],$$ with $a$ and $d$ and $r$ as before (equation \eqref{adr}.)
Let $\beta_{2,K_2}= c,$ for the same constant $c$ as in \eqref{condition_c}. 
Repeating the same argument as before (using \eqref{step1} in bounding the contribution from $\mathcal{T}_{2,0}$), it follows that
\begin{equation}
\label{step2}
\frac{1}{T} \int_T^{2T} \powerzeta \, dt \ll T^{ c_2 y^2 k u },
\end{equation}
with $c_2=1+3\varepsilon$.
Now we use the argument above inductively. Suppose that at step $m-1$ we have the bound
\begin{equation}
\frac{1}{T} \int_T^{2T} \powerzeta \, dt \ll T^{ c_{m-1} y^{m-1}  k u }.
\label{inductive_bd}
\end{equation}
At step $m$, we choose $\beta_{m,0}$ to be the solution to 
\begin{equation}
\label{solution}
\frac{ \log(1/x)}{x} \Big( 2k u - \frac{a(d-1/2)}{r (3-2\alpha)}+\frac{(1-\varepsilon) (d-1/2)}{3-2\alpha} \Big)= c_{m-1}y^{m-1}  ku  \log T.
 \end{equation}
We choose the other parameters in the same way as before, namely
\begin{equation*}
s_{m,0} =2 \Big[ \frac{1-\varepsilon}{2 \beta_{m,0}(3-2\alpha)}\Big] ,\ell_{m,0}=2[s_{m,0}^d/2]
\end{equation*} and for $j \geq 1$,
\begin{equation} \label{paramm}
 \beta_{m,j} = r^j \beta_{m,0} \, , s_{m,j} = 2 \Big[ \frac{a}{2 \beta_{m,j} (3-2\alpha)}\Big] \,, \ell_{m,j} = 2[s_{m,j}^d/2],
 \end{equation} with $a$ and $d,r $ as before. Let $e^{2 \pi \Delta_{m,r}}=T^{\beta_{m,r}}.$
Let $\beta_{m,K_m}= c,$ with $c$ as in \eqref{condition_c}. Let 
$$\mathcal{T}_{m,r} =  \Big\{ T \leq t \leq 2T : \max_{r \leq u \leq K_m} |P_{r,u}(t)| \leq \frac{\ell_{m,r}}{4ke^2}\Big\},$$ where for ease of notation, we let
$$P_{r,j}(t) = \sum_{T^{\beta_{m,r-1}} < p \leq T^{\beta_{m,r}}} \frac{\cos(t \log p) b_{\alpha}(p;\Delta_{m,j})}{p^{1/2+\alpha}}.$$
We define $\mathcal{T}_{m}'$ and $\mathcal{S}_{m,j}'$ similarly as before.
Similarly as in \eqref{not_t0'} and using \eqref{solution}, we get that
\begin{align}
& \frac{1}{T} \int_{[T,2T] \setminus \mathcal{T}_{m,0}} |\zeta(\tfrac12+\alpha+it)|^{-2k} \, dt \ll T^{c_{m-1}y^m  ku}  \nonumber \\
& \times  \exp\bigg(s_{m,0} \log \Big( \Big( \frac{3-2\alpha}{1-\varepsilon} \Big)^{d-1/2} 2^{3/2}  k b(\Delta_{m,0}) e^{3/2} \Big(\log \frac{1}{ \Delta_{m,0} \alpha}\Big)^{\gamma(\Delta_{m,0})} \sqrt{\log  \log T^{\beta_{m,0}}}  \Big) \bigg). \label{tm0}
\end{align}
We also have
\begin{align}
\frac{1}{T} \int_{t\in\mathcal{T}_m'} |\zeta(\tfrac12+\alpha+it)|^{-2k} \, dt  & \ll \Big(\frac{1}{1-T^{-\beta_{m,K_m} \alpha}} \Big)^{\frac{2k}{\beta_{m,K_m}}} \Big(  \log T^{\beta_{m,K_m}}\Big)^{2k^2 b(\Delta_{m,K_m})} \nonumber \\
& \ll  \Big(  \log T \Big)^{2k^2 b(\Delta_{m,K_m})},
\end{align}
where we have used the fact that $\Delta_{m,K_m} \alpha \gg 1$.
Now we consider the contribution from $\int_{t \in \mathcal{S}_{m,j}}$ for $j \leq K_m-1$. We have
\begin{align}
& \frac{1}{T} \int_{t \in \mathcal{S}_{m,j}}  \powerzeta \, dt  \ll  \Big( \frac{1}{1-T^{-\beta_{m,j}\alpha}}  \Big)  ^{\frac{2k}{\beta_{m,j}}} \Big( \log T^{\beta_{m,j}} \Big)^{2k^2 b(\Delta_{m,j})^2 (\log \frac{1}{ \Delta_{m,j} \alpha})^{2 \gamma(\Delta_{m,j})}} \nonumber \\
& \times  \exp \Big(-(d-1/2) s_{m,j+1} \log s_{m,j+1} \Big)  \exp \Big(s_{m,j+1}\log \Big(2^{3/2}ke^{3/2}b (\Delta_{m,j})\Big( \log \frac{1}{\Delta_{m,j+1} \alpha} \Big)^{\gamma(\Delta_{m,j})}\Big) \Big) \nonumber \\
& \times  \Big( \log \beta_{m,j+1}/\beta_{m,j} \Big)^{s_{m,j+1}/2} \ll \exp \Bigg( \frac{2k}{\beta_{m,j}} \log \frac{1}{\alpha} - \frac{ a(d-1/2)}{r \beta_{m,j}(3-2\alpha)} \log \log T \nonumber \\
& + \frac{ \log ( \beta_{m,j} \log T)}{ \beta_{m,j}} \Big(\frac{a(d-1/2)}{r(3-2\alpha)}-2k  \Big)+\frac{a(d-1/2)}{r\beta_{m,j}(3-2\alpha)} \log \frac{r(3-2\alpha)}{a} \Bigg)\nonumber \\
& \times \Big( \log T^{\beta_{m,j}} \Big)^{2k^2 b(\Delta_{m,j})^2 (\log \frac{1}{ \Delta_{m,j} \alpha})^{2\gamma(\Delta_{m,j})}}     \exp \Big( \frac{a}{r\beta_{m,j}(3-2\alpha)}\log \Big(2^{3/2}ke^{3/2}b (\Delta_{m,j})\Big( \log \frac{1}{\Delta_{m,j} \alpha} \Big)^{\gamma(\Delta_{m,j})}\Big) \Big). \label{smj'}
\end{align}
\acom{Now recall that $\alpha \gg \frac{1}{\log T}$. It then follows that 
\begin{align*}
 \frac{1}{T} &\int_{t \in \mathcal{S}_{m,j}}  \powerzeta \, dt  \ll    \exp \Bigg( \frac{\log(1/\beta_{m,j})}{\beta_{m,j}} \Big(2ku - \frac{a(d-1/2)}{r(3-2\alpha)} \Big)+\frac{a(d-1/2)}{r\beta_{m,j}(3-2\alpha)} \log \frac{r(3-2\alpha)}{a} \nonumber \\
& + \Big( \frac{C \log(\beta_{m,j} \log T)}{\beta_{m,j} \log \log T}\Big)^{\delta} \Bigg) \Big( \log T^{\beta_{m,j}} \Big)^{2k^2 b(\Delta_{m,j})^2 (\log \frac{1}{ \Delta_{m,j} \alpha})^{2\gamma(\Delta_{m,j})}} \\
& \times    \exp \Big( \frac{a}{r\beta_{m,j}(3-2\alpha)}\log \Big(2^{3/2}ke^{3/2}b (\Delta_{m,j})\Big( \log \frac{1}{\Delta_{m,j} \alpha} \Big)^{\gamma(\Delta_{m,j})}\Big) \Big),
\end{align*}
where $\delta=1$ if $\alpha= \theta(1/\log T)$ and $\delta=0$ otherwise. If $2ku - \frac{a(d-1/2)}{r(3-2\alpha)}<0$ then the above is $o(1)$. Otherwise, note that the above is decreasing as a function of $\beta_{m,j}$ and then it follows that
}

\begin{align*}
&\frac{1}{T}  \int_{t \in \mathcal{S}_{m,j}}  \powerzeta \, dt  \ll  \exp \Bigg( \frac{ \log ( 1/\beta_{m,0})}{\beta_{m,0}} \Big(2ku - \frac{a(d-1/2)}{r(3-2\alpha)} \Big)\\
&+ \frac{a}{r \beta_{m,0} (3-2\alpha)} \log \Big( \Big(\frac{r(3-2\alpha)}{a} \Big)^{d-1/2} 2^{3/2}ke^{3/2}b (\Delta_{m,0})\Big( \log \frac{1}{\Delta_{m,0} \alpha} \Big)^{\gamma(\Delta_{m,0})}\Big)+ \Big( \frac{ C \log (\beta_{m,0} \log T)}{\beta_{m,0} \log \log T} \Big)^{\delta}\\
&+ (\log \log T^{\beta_{m,0}}) 2k^2 b(\Delta_{m,0})^2  (\log \frac{1}{ \Delta_{m,0} \alpha})^{2\gamma(\Delta_{m,0})}  \Bigg),
\end{align*}
for some constant $C>0$ 
Note that
\begin{align*}
 \frac{a}{r \beta_{m,0} (3-2\alpha)} & \log \Big( \Big(\frac{r(3-2\alpha)}{a} \Big)^{d-1/2} 2^{3/2}ke^{3/2}b (\Delta_{m,0})\Big( \log \frac{1}{\Delta_{m,0} \alpha} \Big)^{\gamma(\Delta_{m,0})}\Big)+ \Big( \frac{ C \log (\beta_{m,0} \log T)}{\beta_{m,0} \log \log T} \Big)^{\delta}\\
&+ (\log \log T^{\beta_{m,0}}) 2k^2 b(\Delta_{m,0})^2  (\log \frac{1}{ \Delta_{m,0} \alpha})^{2\gamma(\Delta_{m,0})} \\
&< \frac{2}{(3-2\alpha) \beta_{m,0}}  \log \Big( \Big(\frac{r(3-2\alpha)}{a} \Big)^{d-1/2} 2^{3/2}ke^{3/2}b (\Delta_{m,0})\Big( \log \frac{1}{\Delta_{m,0} \alpha} \Big)^{\gamma(\Delta_{m,0})}\Big),
\end{align*}
and hence
\begin{align*}
&\frac{1}{T}  \int_{t \in \mathcal{S}_{m,j}}  \powerzeta \, dt  \ll T^{c_{m-1}y^m ku} \\
& \times \exp \Big( \frac{2}{(3-2\alpha) \beta_{m,0}}  \log \Big( \Big(\frac{r(3-2\alpha)}{a} \Big)^{d-1/2} 2^{3/2}ke^{3/2}b (\Delta_{m,0})\Big( \log \frac{1}{\Delta_{m,0} \alpha} \Big)^{\gamma(\Delta_{m,0})}\Big)\Big).
\end{align*}
Combining the equation above and \eqref{tm0}, we get that
\begin{align*}
\frac{1}{T} \int_T^{2T}  & \powerzeta \, dt \ll T^{c_{m-1}y^m ku} \\
& \times \exp \Bigg( \frac{1-\varepsilon}{(3-2\alpha) \beta_{m,0}} \log \Big( \Big( \frac{3-2\alpha}{1-\varepsilon} \Big)^{d-1/2} 2^{3/2}  kb(\Delta_{m,0})e^{3/2} \Big(\log \frac{1}{ \Delta_{m,0} \alpha}\Big)^{\gamma(\Delta_{m,0})} \sqrt{\log  \log T^{\beta_{m,0}}}  \Big)\\
&+  \frac{2}{(3-2\alpha) \beta_{m,0}}  \log \Big( \Big(\frac{r(3-2\alpha)}{a} \Big)^{d-1/2} 2^{3/2}ke^{3/2}b (\Delta_{m,0})\Big( \log \frac{1}{\Delta_{m,0} \alpha} \Big)^{\gamma(\Delta_{m,0})}\Big) \Bigg).
\end{align*}
We then rewrite the above as
\begin{align*}
\frac{1}{T} \int_T^{2T}  & \powerzeta \, dt \ll T^{c_m y^m ku},
\end{align*}
where
\begin{align}
c_m &= c_{m-1} + \frac{1}{(3-2\alpha) \beta_{m,0} y^m k u \log T}  \\
& \times  \Bigg( (1-\varepsilon) \log \Big( \Big( \frac{3-2\alpha}{1-\varepsilon} \Big)^{d-1/2} 2^{3/2}  kb(\Delta_{m,0})e^{3/2} \Big(\log \frac{1}{ \Delta_{m,0} \alpha}\Big)^{\gamma(\Delta_{m,0})} \sqrt{\log  \log T^{\beta_{m,0}}}  \Big) \nonumber \\
&+ 2 \log \Big( \Big(\frac{r(3-2\alpha)}{a} \Big)^{d-1/2} 2^{3/2}ke^{3/2}b (\Delta_{m,0})\Big( \log \frac{1}{\Delta_{m,0} \alpha} \Big)^{\gamma(\Delta_{m,0})}\Big) \Bigg). \label{recurence}
\end{align}
Combining equations \eqref{solution} and \eqref{recurence}, it follows that
\begin{align}
& \frac{ \log \Big(1/\beta_{m+1,0} \Big)}{\beta_{m+1,0}}  = y \frac{ \log \Big(1/\beta_{m,0} \Big)}{\beta_{m,0}} + \frac{1}{(3-2\alpha) \beta_{m,0} (2ku - \frac{a(d-1/2)}{r(3-2\alpha)}+\frac{(1-\varepsilon)(d-1/2)}{3-2\alpha} \Big)} \nonumber  \\
& \times \Bigg( (1-\varepsilon) \log \Big( \Big( \frac{3-2\alpha}{1-\varepsilon} \Big)^{d-1/2} 2^{3/2}  kb(\Delta_{m,0})e^{3/2} \Big(\log \frac{1}{ \Delta_{m,0} \alpha}\Big)^{\gamma(\Delta_{m,0})} \sqrt{\log  \log T^{\beta_{m,0}}}  \Big) \nonumber \\
&+ 2 \log \Big( \Big(\frac{r(3-2\alpha)}{a} \Big)^{d-1/2} 2^{3/2}ke^{3/2}b (\Delta_{m,0})\Big( \log \frac{1}{\Delta_{m,0} \alpha} \Big)^{\gamma(\Delta_{m,0})}\Big) \Bigg). \label{rec2}
\end{align}
We will show that the sequence $\beta_{m,0}$ has a limit. We first show that it is bounded. Namely, we show by induction that
\begin{equation}
\beta_{m,0} \ll \frac{1}{(\log \log T)^{\frac{1}{2d-1}}}.
\label{bound_bm}
\end{equation}
The fact that the bound above holds for $\beta_{1,0}$ easily follows. We then assume that it holds for $\beta_{m,0}$ and from \eqref{rec2}, note that also
$$\beta_{m+1,0} \ll \frac{1}{(\log \log T)^{\frac{1}{2d-1}}}.$$
If $\alpha = o( (\log \log T)^{\frac{1}{2d-1}}/ \log T)$, then using induction again, we get that 
$$\beta_{m,0} \ll \frac{1}{ (\log \log T)^{\frac{1}{2d-1}} (\log \log \log T)^{C_1}},$$ for some $C_1>0$. Then $\Delta_{m,0} \alpha \to 0$, and hence $\gamma(\Delta_{m,0})=1$. 

If $\alpha \gg (\log \log T)^{\frac{1}{2d-1}}/ \log T$, note that if there is some $n$ such that 
\begin{equation}
\beta_n \gg \frac{1}{(\log \log T)^{\frac{1}{2d-1}}},\label{big}
\end{equation} then \eqref{big} holds for all $m \geq n$. In this case, $\gamma(\Delta_{m,0})=0$, for all $m \geq n$, and then we can compute the limit of $\beta_{m,0}$ as detailed below. If we assume that there is no $n$ such that \eqref{big} holds, then from \eqref{rec2}, it follows that the sequence $\beta_{m,0}$ is increasing, and hence has a limit. 

Combining the remarks above, it follows that $\beta_{m,0}$ converges for any $\alpha$. Let $L = \lim_{m \to \infty} \beta_{m,0}$.

If $\alpha \gg \frac{(\log \log T)^{\frac{1}{2d-1}}}{ \log T}$, then note that we can't have $L \alpha \log T =o(1)$. Indeed, if that were the case, then from \eqref{rec2}, it follows that 
$$ \frac{1}{L} \asymp \Big(  \log \frac{1}{L \alpha \log T} \Big)^{\frac{1+\frac{2}{1-\varepsilon}}{d-1/2}} (\log \log T^L)^{\frac{1}{2d-1}}.$$
The expression above implies that $\log(1/L) =\Theta(\log \log \log T)$ and then
$$1 \ll L (\log \log T)^{\frac{1}{2d-1}} \Big(  \log \frac{1}{L \alpha \log T} \Big)^{\frac{1+\frac{2}{1-\varepsilon}}{d-1/2}}$$
Since $\alpha \gg \frac{(\log \log T)^{\frac{1}{2d-1}}}{ \log T}$, from the expression above it follows that
$$1 \ll L \alpha \log T  \Big(  \log \frac{1}{L \alpha \log T} \Big)^{\frac{1+\frac{2}{1-\varepsilon}}{d-1/2}},$$
which is not true of $L \alpha \log T =o(1)$. So if $\alpha \gg \frac{(\log \log T)^{\frac{1}{2d-1}}}{ \log T}$, then 
\begin{equation}
L = \Theta \Big(  \frac{1}{(\log \log T)^{\frac{1}{(2d-1)}}} \Big).
\label{l1}
\end{equation}
Using \eqref{inductive_bd}, \eqref{solution} and \eqref{l1}, by letting $m \to \infty$, it follows that
\begin{equation}
\frac{1}{T} \int_T^{2T} \powerzeta \, dt  \ll \exp \Big(  C ( \log \log T)^{\frac{1}{(2d-1)}} \log \log \log T \Big), \label{interm}
\end{equation}
for some $C>0$. 

Now assume that $\alpha= o( \frac{(\log \log T)^{\frac{1}{2d-1}}}{ \log T})$. If $L \alpha \log T  \neq o(1)$, then from the previous discussion, it follows that
$L = \Theta  \Big(  \frac{1}{(\log \log T)^{\frac{1}{(2d-1)}}} \Big)$, which is a contradiction with the fact that $L \alpha \log T \neq 0$. So we must have 
$$ L \Big( \log \log T^L)^{\frac{1}{2d-1}} \Big( \log \frac{1}{L \alpha \log T} \Big)^{\frac{3-\varepsilon}{(1-\varepsilon)(d-1/2)}} \asymp 1.$$
This implies that
\begin{equation}
L = \Theta \Big(  \frac{1}{ (\log \log T)^{\frac{1}{2d-1}} \Big(  \log \frac{ (\log \log T)^{\frac{1}{2d-1}}}{ \alpha \log T} \Big)^{\frac{3-\varepsilon}{(1-\varepsilon)(d-1/2)}}} \Big),
\label{l2}
\end{equation}
and hence
\begin{equation}
\frac{1}{T} \int_T^{2T} \powerzeta \, dt  \ll \exp \Big(  C ( \log \log T)^{\frac{1}{(2d-1)}} (\log \log \log T)^{\frac{3-\varepsilon}{(1-\varepsilon)(d-1/2)}} (\log \log \log \log T) \Big), \label{interm2}
\end{equation}
Now assume that $\alpha \gg (\log \log T)^{1-\frac{1}{6k}+\varepsilon}/\log T$. Now we perform the same argument as before once more, but this time choose the parameters as follows.
We pick $a, d,r $ with
$$ a = 2 \Big(1- \Big(\frac{\varepsilon}{2} \Big)^3 \Big)\, ,d = \frac{2+\frac{\varepsilon}{2}}{2+\varepsilon} \, ,r = 1+ \frac{(\varepsilon/2)^2}{1+\frac{\varepsilon}{2}}.$$We pick
\begin{equation}
\beta_0 = \frac{1}{(\log \log T)^A} \, ,   s_0 =[ \frac{1-\varepsilon}{(3-2\alpha)\beta_0}], \, \ell_0 = 2[s_0^d/2],
\label{beta'}
\end{equation}
where $A>1$ is a constant which satisfies 
\begin{equation}
\frac{ \frac{1}{2}+\varepsilon'}{d-1/2}<A.
\label{condition_a}
\end{equation}
Additionally, if $2k - a(d-1/2)/(r(3-2\alpha)) > 0$, then we also choose $A$ such that
\begin{equation}
\label{extra_a}
A< \frac{2k \log (\alpha \log T) (1-\varepsilon'')}{(2k - \frac{a(d-1/2)}{r(3-2\alpha)}) \log \log \log T},
\end{equation}
and we can choose
$$\varepsilon'= \frac{k \varepsilon}{12 (2k - \frac{1}{3}(1-\frac{\varepsilon}{2}))}, \, \varepsilon''= \frac{k \varepsilon}{6k(1+\varepsilon)-1}.$$
Note that a choice for $A$ is possible because $\alpha \gg (\log \log T)^{1-\frac{1}{6k}+\varepsilon}/\log T$.

We pick the other parameters similarly as before; namely for $1 \leq j \leq K$, 
\begin{equation} \beta_j = r \beta_{j-1}, s_j = \Big[ \frac{a}{(3-2\alpha) \beta_j} \Big], \ell_j =2[s_j^d/2],
\label{other_param}
\end{equation}
and $\beta_K = c$ with $c$ as in \eqref{condition_c}.
With this choice of parameters, we get that
$$ \frac{1}{T} \int_{[T,2T] \setminus \mathcal{T}_0} \powerzeta \, dt = o(1).$$
We also have that 
\begin{align*}
\int _{t \in \mathcal{S}_j} & \powerzeta \, dt  \ll \exp \Bigg(-2k (\log \log T)^A \log(\alpha \log T) \\
&+ A(\log \log T)^A (\log \log \log T) \Big(2k-\frac{a(d-1/2)}{r(3-2\alpha)} \Big) + O \Big((\log \log T)^A \log \log \log \log T \Big) \Bigg).
\end{align*}
If $2k - a(d-1/2)/(r(3-2\alpha)) \leq 0$, then the above is $o(1)$ since $\alpha \gg (\log \log T)^{1-1\frac{1}{6k}+\varepsilon}/\log T$. If $2k - a(d-1/2)/(r(3-2\alpha)) >0$, then using \eqref{extra_a}, we still get that the contribution from $\mathcal{S}_j$ is $o(1)$.

Using similar arguments as before, it follows that
\begin{equation*}
\frac{1}{T} \int_{t \in \mathcal{T}'} \powerzeta \, dt \ll (\log T)^{2k^2 (1+\frac{1}{T^{c\alpha}-1})^2}.
\end{equation*}
Hence if $\alpha \gg (\log \log T)^{1-\frac{1}{6k}+\varepsilon}/\log T$, then
\begin{equation}\label{improved}
\frac{1}{T} \int_{t \in \mathcal{T}'} \powerzeta \, dt \ll (\log T)^{2k^2 (1+\frac{1}{T^{c\alpha}-1})^2},
\end{equation}
which proves \eqref{bound2}. Combining \eqref{l2} and \eqref{improved}, we obtain \eqref{bd3}.

Now we assume that $\alpha = o(1/\log T)$. 
The trivial bound is 
$$ \frac{1}{T} \int_T^{2T} \powerzeta \, dt \ll T^{(1+\varepsilon) ku}.$$

Now assume that at step $m-1$, we have the bound
$$ \frac{1}{T} \int_T^{2T} \powerzeta \, dt \ll T^{c_{m-1} y^{m-1} ku}.$$ At step $m$, we choose $\beta_{m,0}$ to be the solution to 
\begin{equation} \label{sol2}
\frac{\log \log T}{x} 2k (u-1) + \frac{\log(1/x)}{x} \Big(  \frac{(1-\varepsilon)(d-1/2)}{3-2\alpha}+2k - \frac{a(d-1/2)}{r(3-2\alpha)} \Big)= c_{m-1} y^{m-1} ku \log T
\end{equation}
as follows.
\acom{Note that such a solution exists and is unique if $\frac{(1-\varepsilon)(d-1/2)}{3-2\alpha}+2k - \frac{a(d-1/2)}{r(3-2\alpha)} \geq 0$. When  $ \frac{(1-\varepsilon)(d-1/2)}{3-2\alpha}+2k - \frac{a(d-1/2)}{r(3-2\alpha)}<0$, let
$$f(x) =\frac{\log \log T}{x} 2k (u-1) + \frac{\log(1/x)}{x} \Big(  \frac{(1-\varepsilon)(d-1/2)}{3-2\alpha}+2k - \frac{a(d-1/2)}{r(3-2\alpha)} \Big)- c_{m-1} y^{m-1} ku \log T .$$ For simplicity, let $A = 2k(u-1)$ and $B= \frac{(1-\varepsilon)(d-1/2)}{3-2\alpha}+2k - \frac{a(d-1/2)}{r(3-2\alpha)}$. We have $A >0$ and $B<0$.
Note that $x_0 = e (\log T)^{A/B}$ is the maximum of $f$.  

If $f(x_0)<0$ note that there are no roots of $f$. In this case, it follows that the contribution from $t \notin \mathcal{T}_{m,0}$ dominates the contribution from $t \in \mathcal{S}_{m,j}$ for all $j$ (see the discussion below) , and we can pick $\beta_{m,0}$ such that the contribution from $t \notin \mathcal{T}_{m,0}$ is $o(T)$. 
If $f(x_0) >0$ then there are are exactly two solutions, and we pick $\beta_{m,0}$ to be the unique solution which is bigger than $x_0$. 
}
As before, we let $s_{m,0}   =2[ \frac{1-\varepsilon}{2(3-2\alpha)\beta_{m,0}}]$ and $\ell_{m,0} =2[s_{m,0}^d/2]$ and choose the other parameters as in \eqref{paramm}. Keeping the same notation as before, we get that
\begin{align}
& \frac{1}{T} \int_{[T,2T] \setminus \mathcal{T}_{m,0}} |\zeta(\tfrac12+\alpha+it)|^{-2k} \, dt \ll T^{c_{m-1} y^{m-1} ku} \nonumber \\
& \times  \exp\bigg(-s_{m,0}(d-1/2) \log s_{m,0}+s_{m,0} \log \Big(2^{3/2}  kb (\Delta_{m,0}) e^{3/2} \Big(\log \frac{1}{ \Delta_{m,0} \alpha}\Big) \sqrt{\log  \log T^{\beta_{m,0}}}  \Big) \bigg)\nonumber  \\
& \ll  \exp \Bigg(  ku y^m \log T\Big( c_{m-1} + \frac{2(u-1)(1-\varepsilon)(d-1/2)\log(\beta_{m,0} \log T)}{(3-2\alpha) (2ku - \frac{a(d-1/2)}{3-2\alpha}+\frac{(1-\varepsilon)(d-1/2)}{3-2\alpha})y^m \beta_{m,0} \log T }\Big) \nonumber \\
&+ \frac{1-\varepsilon}{(3-2\alpha) \beta_{m,0}} \log \Big( \Big( \frac{3-2\alpha}{1-\varepsilon} \Big)^{d-1/2} 2^{3/2}  kb (\Delta_{m,0}) e^{3/2} \Big(\log \frac{1}{ \Delta_{m,0} \alpha}\Big) \sqrt{\log  \log T^{\beta_{m,0}}}  \Big) \Bigg) \label{0''}
\end{align}
and 
\begin{align}
\frac{1}{T} \int_{t\in\mathcal{T}_m'} |\zeta(\tfrac12+\alpha+it)|^{-2k} \, dt  & \ll  \Big(\frac{1}{1-T^{-\beta_{m,K_m} \alpha}} \Big)^{\frac{2k}{\beta_{m,K_m}}} \Big(  \log T^{\beta_{m,K_m}}\Big)^{2k^2 b(\Delta_{m,K_m}) (\log \frac{1}{\Delta_{m,K_m} \alpha} )^2} \nonumber \\
& \ll  \exp \Big(C (\log \log T) \Big( \log \frac{1}{\alpha \log T} \Big)^2 \Big),\label{tm''}
\end{align}
for some $C>0$, where we used the fact that $\alpha = o(1/\log T)$ and hence $\Delta_{m,K_m} \alpha \to 0$. We also get that
\begin{align}
& \frac{1}{T} \int_{t \in \mathcal{S}_{m,j}}  \powerzeta \, dt  \ll  \exp \Bigg( \frac{2k}{\beta_{m,j}} \log \frac{1}{\alpha} - \frac{ a(d-1/2)}{r \beta_{m,j}(3-2\alpha)} \log \log T \nonumber \\
& + \frac{ \log ( \beta_{m,j} \log T)}{ \beta_{m,j}} \Big(\frac{a(d-1/2)}{r(3-2\alpha)}-2k  \Big)+\frac{a(d-1/2)}{r\beta_{m,j}(3-2\alpha)} \log \frac{r(3-2\alpha)}{a} \Bigg)\\
& \times \Big( \log T^{\beta_{m,j}} \Big)^{2k^2 b(\Delta_{m,j})^2 (\log \frac{1}{ \Delta_{m,j} \alpha})^{2}}     \exp \Big( \frac{a}{r\beta_{m,j}(3-2\alpha)}\log \Big(2^{3/2}ke^{3/2}b (\Delta_{m,j})\Big( \log \frac{1}{\Delta_{m,j} \alpha} \Big)\Big) \Big),\label{smj''}
\end{align}
where we again used the fact that $\Delta_{m,j} \alpha \to 0$. 

From our choice of $\beta_{m,0}$, the above is decreasing as a function of $\beta$. Hence we get that
\begin{align}
&  \frac{1}{T}  \int_{t \in \mathcal{S}_{m,j}}  \powerzeta \, dt  \ll \exp \Bigg(  ku y^m \log T\Big( c_{m-1} \nonumber \\
&+ \frac{2(u-1)(1-\varepsilon)(d-1/2)\log(\beta_{m,0} \log T)}{(3-2\alpha) (2ku - \frac{a(d-1/2)}{3-2\alpha}+\frac{(1-\varepsilon)(d-1/2)}{3-2\alpha})y^m \beta_{m,0} \log T } \Big) + \frac{a}{r \beta_{m,0} (3-2\alpha)} \log \Big( \Big(\frac{r(3-2\alpha)}{a}\Big)^{d-1/2} \nonumber \\
& 2^{3/2} k e^{3/2} b(\Delta_{m,0}) \Big( \log \frac{1}{\Delta_{m,0} \alpha} \Big) \Big)+2k^2 b(\Delta_{m,0})^2 \Big( \log \frac{1}{\Delta_{m,0} \alpha} \Big)^2 (\log \log T^{\beta_{m,0}})\Bigg).\label{int4}
\end{align}
Notice that 
\begin{align*}
  \frac{a}{r \beta_{m,0} (3-2\alpha)} & \log \Big( \Big(\frac{r(3-2\alpha)}{a}\Big)^{d-1/2}  2^{3/2} k e^{3/2} b(\Delta_{m,0}) \Big( \log \frac{1}{\Delta_{m,0} \alpha} \Big) \Big)\\
 & +2k^2 b(\Delta_{m,0})^2 \Big( \log \frac{1}{\Delta_{m,0} \alpha} \Big)^2 (\log \log T^{\beta_{m,0}})\Bigg)\\
 & <  \frac{2}{ \beta_{m,0} (3-2\alpha)} \log \Big( \Big(\frac{r(3-2\alpha)}{a}\Big)^{d-1/2}  2^{3/2} k e^{3/2} b(\Delta_{m,0}) \Big( \log \frac{1}{\Delta_{m,0} \alpha} \Big) \Big).
\end{align*}
Combining the above, \eqref{0''}, \eqref{int4} and \eqref{tm''}, it follows that
\begin{align*}
 & \frac{1}{T}  \int_{T}^{2T}  \powerzeta \, dt  \ll\exp \Big(C (\log \log T) \Big( \log \frac{1}{\alpha \log T} \Big)^2 \Big)+ \exp \Bigg(   ku y^m \log T\Big( c_{m-1} \\
  &+ \frac{2(u-1)(1-\varepsilon)(d-1/2)\log(\beta_{m,0} \log T)}{(3-2\alpha) (2ku - \frac{a(d-1/2)}{3-2\alpha}+\frac{(1-\varepsilon)(d-1/2)}{3-2\alpha})y^m \beta_{m,0} \log T }\Big) \nonumber \\
&+ \frac{1-\varepsilon}{(3-2\alpha) \beta_{m,0}} \log \Big( \Big( \frac{3-2\alpha}{1-\varepsilon} \Big)^{d-1/2} 2^{3/2}  kb (\Delta_{m,0}) e^{3/2} \Big(\log \frac{1}{ \Delta_{m,0} \alpha}\Big) \sqrt{\log  \log T^{\beta_{m,0}}}  \Big) \\
&+  \frac{2}{(3-2\alpha) \beta_{m,0} } \log \Big( \Big(\frac{r(3-2\alpha)}{a}\Big)^{d-1/2}  2^{3/2} k e^{3/2} b(\Delta_{m,0}) \Big( \log \frac{1}{\Delta_{m,0} \alpha} \Big) \Big)\Bigg).
\end{align*}
If the second term above is less than the first, then we are done. Otherwise, it follows that
\begin{equation}
\frac{1}{T}  \int_{T}^{2T}  \powerzeta \, dt \ll T^{c_m y^m ku},
\label{stepm'}
\end{equation}
where 
\begin{align}
c_m &= c_{m-1} +  \frac{2(u-1)(1-\varepsilon)(d-1/2)\log(\beta_{m,0} \log T)}{(3-2\alpha) (2ku - \frac{a(d-1/2)}{3-2\alpha}+\frac{(1-\varepsilon)(d-1/2)}{3-2\alpha})y^m \beta_{m,0}(\log T) u } \nonumber \\
&+ \frac{1}{(3-2\alpha) \beta_{m,0}( \log T) k u y^m } \Big( 2 \log \Big( \Big(\frac{r(3-2\alpha)}{a}\Big)^{d-1/2}  2^{3/2} k e^{3/2} b(\Delta_{m,0}) \Big( \log \frac{1}{\Delta_{m,0} \alpha} \Big) \nonumber \\
&+ (1-\varepsilon) \log \Big( \Big( \frac{3-2\alpha}{1-\varepsilon} \Big)^{d-1/2} 2^{3/2}  kb (\Delta_{m,0}) e^{3/2} \Big(\log \frac{1}{ \Delta_{m,0} \alpha}\Big) \sqrt{\log  \log T^{\beta_{m,0}}}  \Big)\Big). \label{recursion2}
\end{align}
Combining \eqref{recursion2} and \eqref{sol2} and taking the limit as $m \to \infty$, it follows that
\begin{align*}
 (1-\varepsilon)& (d-1/2)\log(1/L) =2 \log \Big( \Big(\frac{r(3-2\alpha)}{a}\Big)^{d-1/2}  2^{3/2} k e^{3/2} b(2 \pi L (\log T) ) \Big( \log \frac{1}{2 \pi L (\log T )\alpha} \Big) \\
 &+ (1-\varepsilon) \log \Big( \Big( \frac{3-2\alpha}{1-\varepsilon} \Big)^{d-1/2} 2^{3/2}  kb (2 \pi L (\log T)) e^{3/2} \Big(\log \frac{1}{2 \pi L (\log T) \alpha}\Big) \sqrt{\log  \log T^{L}}  \Big).
 \end{align*}
 As before, it follows that
 $$ L \Big( \log \log T^L)^{\frac{1}{2d-1}} \Big( \log \frac{1}{L \alpha \log T} \Big)^{\frac{3-\varepsilon}{(1-\varepsilon)(d-1/2)}} \asymp 1,$$
and hence
\begin{equation*}
L = \Theta \Big(  \frac{1}{ (\log \log T)^{\frac{1}{2d-1}} \Big(  \log \frac{ (\log \log T)^{\frac{1}{2d-1}}}{ \alpha \log T} \Big)^{\frac{3-\varepsilon}{(1-\varepsilon)(d-1/2)}}} \Big).
\end{equation*}
Using \eqref{sol2} it follows that
\begin{align*}
\frac{1}{T} &  \int_{T}^{2T}  \powerzeta \, dt \ll \exp \Bigg( (\log \log T)^{\frac{1}{2d-1}}  \Big(  \log \frac{ (\log \log T)^{\frac{1}{2d-1}}}{ \alpha \log T} \Big)^{\frac{3-\varepsilon}{(1-\varepsilon)(d-1/2)}} \Big(C_1 \log \log \log T \\
&+ C_2 \log \frac{1}{\alpha \log T} \Big) \Bigg),
\end{align*}
for some $C_1>0$ and $C_2>0$. Since $d=(2+\varepsilon/2)/(2+\varepsilon)$, after a relabelling of the $\varepsilon$, we get that 
$$\frac{1}{T}  \int_{T}^{2T}  \powerzeta \, dt \ll \exp \Bigg( (\log \log T)^{1+\varepsilon} \Big(\log \frac{\log \log T}{\alpha \log T} \Big)^{6+\varepsilon} \Big( \log \frac{1}{\alpha \log T} \Big) \Bigg).$$

}
}

\section{The asymptotic formula}
\label{section_formula}
In this section we shall prove Theorem \ref{thm_asymptotic}. We have
\begin{align}\label{asymp1}
&\frac{1}{2\pi i}\int_{1-iT/2}^{1+iT/2}e^{z^2/2}X^z\frac{1}{\zeta(\tfrac12+\alpha+it+z)^{k}\zeta(\tfrac12+\alpha-it+z)^{k}}\frac{dz}{z}\\
&\qquad\qquad=\sum_{m,n= 1}^{\infty}\frac{\mu_k(m)\mu_k(n)}{(mn)^{1/2+\alpha}}\Big(\frac mn\Big)^{-it}W\Big(\frac{mn}{X}\Big),\nonumber
\end{align}
where
\begin{equation}\label{asymp3}
W(x)=\frac{1}{2\pi i}\int_{1-iT/2}^{1+iT/2}e^{z^2/2}x^{-z}\frac{dz}{z},
\end{equation}
by writing the zeta-functions in \eqref{asymp1} as Dirichlet series and integrating term-by-term. On the other hand, by deforming the line of integration, the left hand side of \eqref{asymp1} is equal to
\begin{align*}
&|\zeta(\tfrac12+\alpha+it)|^{-2k}+\frac{1}{2\pi i} \int_{-(1-\varepsilon)\alpha-iT/2}^{-(1-\varepsilon)\alpha+iT/2}e^{z^2/2}X^z\frac{1}{\zeta(\tfrac12+\alpha+it+z)^{k}\zeta(\tfrac12+\alpha-it+z)^{k}}\frac{dz}{z}\\
&\qquad\qquad+ O\Big(\frac{e^{-T^2/8}X}{T}\max_{\varepsilon\alpha\leq\sigma\leq 1+\alpha}\frac{1}{|\zeta(\tfrac12+\sigma+i(t\pm T/2))|^{k}|\zeta(\tfrac12+\sigma-i(t\mp T/2))|^{k}}\Big).
\end{align*}
For $t\in[T,2T]$, the $O$-term is
\begin{align*}
\ll \frac{e^{-T^2/8}X}{T}T^{\frac{k\log\frac{1}{\varepsilon\alpha}}{\log\log T}}\ll e^{-T^2/8}XT^{O_k(1)},
\end{align*}
by  Lemma \ref{CClemma}. Hence, integrating over $t\in[T,2T]$ we obtain that
\begin{align*}
&\int_{T}^{2T}|\zeta(\tfrac12+\alpha+it)|^{-2k}dt=\sum_{m,n= 1}^{\infty}\frac{\mu_k(m)\mu_k(n)}{(mn)^{1/2+\alpha}}W\Big(\frac{mn}{X}\Big)\int_{T}^{2T}\Big(\frac mn\Big)^{-it}dt\\
&\qquad\qquad-\frac{1}{2\pi i} \int_{-(1-\varepsilon)\alpha-iT/2}^{-(1-\varepsilon)\alpha+iT/2}e^{z^2/2}X^z\int_{T}^{2T}\frac{1}{\zeta(\tfrac12+\alpha+it+z)^{k}\zeta(\tfrac12+\alpha-it+z)^{k}}dt\frac{dz}{z}\\
&\qquad\qquad+O\big(e^{-T^2/8}XT^{O_k(1)}\big).\nonumber
\end{align*}
Furthermore, by the Cauchy-Schwarz inequality the second term above is
\begin{align*}
&\ll_\varepsilon \frac{X^{-(1-\varepsilon)\alpha}}{\alpha}\int_{-T/2}^{T/2}e^{-z^2/2}\bigg(\int_{T}^{2T}|\zeta(\tfrac12+\varepsilon\alpha+i(t+z))|^{-2k}dt\bigg)^{1/2}\nonumber\\
&\qquad\qquad\qquad\qquad\times\bigg(\int_{T}^{2T}|\zeta(\tfrac12+\varepsilon\alpha-i(t-z))|^{-2k}dt\bigg)^{1/2}dz\nonumber\\
&\ll_\varepsilon \frac{X^{-(1-\varepsilon)\alpha}}{\alpha}T(\log\log T)^k(\log T)^{k^2}\ll_\varepsilon TX^{-(1-\varepsilon)\alpha}(\log T)^{k^2+1},
\end{align*}
in view of \eqref{1} and \eqref{4} in Theorems \ref{mainthm1} and \ref{mainthm2}, and the fact that $\frac{1}{\alpha} \ll \min\Big\{\frac{(\log T)^{\frac{1}{2k}}}{(\log\log T)^{\frac4k+\varepsilon}},\frac{\log T}{\log\log T}\Big\}$. Thus,
\begin{align}\label{asymp2}
\int_{T}^{2T}|\zeta(\tfrac12+\alpha+it)|^{-2k}dt&=\sum_{m,n= 1}^{\infty}\frac{\mu_k(m)\mu_k(n)}{(mn)^{1/2+\alpha}}W\Big(\frac{mn}{X}\Big)\int_{T}^{2T}\Big(\frac mn\Big)^{-it}dt\\
&\qquad\qquad+O\big(TX^{-(1-\varepsilon)\alpha}(\log T)^{k^2+1}\big)+O\big(e^{-T^2/8}XT^{O_k(1)}\big).\nonumber
\end{align}

We next consider the contribution of off-diagonal terms $m\ne n$ on the right hand side of \eqref{asymp2}, which is
\begin{align*}
\ll\sum_{m\ne n}\frac{d_k(m)d_k(n)}{(mn)^{1/2+\alpha}|\log m/n|}\Big|W\Big(\frac{mn}{X}\Big)\Big|.
\end{align*}
We note from \eqref{asymp3} that $W(x)\ll x^{-1}$ trivially and
\[
W(x)=1+O(x)+O\Big(\frac{e^{-T^2/8}}{xT}\Big)
\]
if $x\leq 1$, by moving the contour to the $-1$-line, and so this contribution is bounded by
\begin{align*}%
&\ll \sum_{\substack{m\ne n\\mn\leq X}}\frac{d_k(m)d_k(n)}{(mn)^{1/2+\alpha}|\log m/n|}+\frac{e^{-T^2/8}X}{T}\sum_{\substack{m\ne n\\mn\leq X}}\frac{d_k(m)d_k(n)}{(mn)^{3/2+\alpha}|\log m/n|}\\
&\qquad\qquad+X\sum_{\substack{m\ne n\\mn> X}}\frac{d_k(m)d_k(n)}{(mn)^{3/2+\alpha}|\log m/n|}\nonumber\\
&=E_1+E_2,\nonumber
\end{align*}
say, where $E_1$ and $E_2$ denote the sums with $\frac{m}{n}\notin [1/2,2]$ and $\frac{m}{n}\in [1/2,2]$, respectively. With $E_1$, $|\log m/n|\gg 1$ and we get
\begin{align}\label{asymp6}
E_1&\ll \sum_{\substack{mn\leq X}}\frac{d_k(m)d_k(n)}{(mn)^{1/2+\alpha}}+\frac{e^{-T^2/8}X}{T}\sum_{\substack{mn\leq X}}\frac{d_k(m)d_k(n)}{(mn)^{3/2+\alpha}}+X\sum_{\substack{mn> X}}\frac{d_k(m)d_k(n)}{(mn)^{3/2+\alpha}}\nonumber\\
&\ll X^{1/2-\alpha}(\log X)^{2k-1}+\frac{e^{-T^2/8}X}{T}.
\end{align}
For $E_2$, we use the fact that
\begin{equation}\label{asymp4}
\frac{d_k(m)d_k(n)}{(mn)^{\sigma}}\ll \frac{d_k(m)^2}{m^{2\sigma}}+\frac{d_k(n)^2}{n^{2\sigma}},
\end{equation}
and we have
\begin{align}\label{asymp7}
E_2&\ll \sum_{m\leq\sqrt{2X}}\frac{d_k(m)^2}{m^{1+2\alpha}}\sum_{\substack{n\ne m\\m/2\leq n\leq 2m}}\frac{1}{|\log m/n|}+\frac{e^{-T^2/8}X}{T}\sum_{m\leq\sqrt{2X}}\frac{d_k(m)^2}{m^{3+2\alpha}}\sum_{\substack{n\ne m\\m/2\leq n\leq 2m}}\frac{1}{|\log m/n|}\nonumber\\
&\qquad\qquad+X\sum_{m>\sqrt{X/2}}\frac{d_k(m)^2}{m^{3+2\alpha}}\sum_{\substack{n\ne m\\m/2\leq n\leq 2m}}\frac{1}{|\log m/n|}\nonumber\\
&\ll \log X\sum_{m\leq\sqrt{2X}}\frac{d_k(m)^2}{m^{2\alpha}}+\frac{e^{-T^2/8}X\log X}{T}\sum_{m\leq\sqrt{2X}}\frac{d_k(m)^2}{m^{2+2\alpha}}+X\sum_{m>\sqrt{X/2}}\frac{d_k(m)^2\log m}{m^{2+2\alpha}}\nonumber\\
&\ll X^{1/2-\alpha}(\log X)^{k^2}+\frac{e^{-T^2/8}X\log X}{T}.
\end{align}

We are left with the contribution of the diagonal terms $m=n$ on the right hand side of \eqref{asymp2}. By \eqref{asymp3} this is
\begin{align*}
&T\sum_{n= 1}^{\infty}\frac{\mu_k(n)^2}{n^{1+2\alpha}}W\Big(\frac{n^2}{X}\Big)=\frac{T}{2\pi i}\int_{1-iT/2}^{1+iT/2}e^{z^2/2}X^{z}\sum_{n= 1}^{\infty}\frac{\mu_k(n)^2}{n^{1+2\alpha+2z}}\frac{dz}{z}\\
&\qquad=\frac{T}{2\pi i}\int_{1-iT/2}^{1+iT/2}e^{z^2/2}X^{z}\zeta(1+2\alpha+2z)^{k^2}\prod_{p}\bigg(1-\frac{1}{p^{1+2\alpha+2z}}\bigg)^{k^2}\bigg(1+\sum_{j=1}^\infty\frac{\mu_k(p^j)^2}{p^{(1+2\alpha+2z)j}}\bigg)\frac{dz}{z}.
\end{align*}
We move the contour to the $-(1-\varepsilon)\alpha$-line, crossing a simple pole at $z=0$. In doing so, we get that this is equal to
\begin{align*}
&T\zeta(1+2\alpha)^{k^2}\prod_{p}\bigg(1-\frac{1}{p^{1+2\alpha}}\bigg)^{k^2}\bigg(1+\sum_{j=1}^\infty\frac{\mu_k(p^j)^2}{p^{(1+2\alpha)j}}\bigg)\\
&\qquad\qquad+O\big(TX^{-(1-\varepsilon)\alpha}\alpha^{-(k^2+1)}\big)+ O\big(e^{-T^2/8}XT^{O_k(1)}\big).\nonumber
\end{align*}
Thus,
\begin{align*}
&\int_{T}^{2T}|\zeta(\tfrac12+\alpha+it)|^{-2k}dt=T\zeta(1+2\alpha)^{k^2}\prod_{p}\bigg(1-\frac{1}{p^{1+2\alpha}}\bigg)^{k^2}\bigg(1+\sum_{j=1}^\infty\frac{\mu_k(p^j)^2}{p^{(1+2\alpha)j}}\bigg)\\
&\qquad\qquad+O\big(TX^{-(1-\varepsilon)\alpha}(\log T)^{k^2+1}\big)+ O\big(e^{-T^2/8}XT^{O_k(1)}\log X\big)+O\big(X^{1/2-\alpha}(\log X)^{k^2}\big),
\end{align*}
by combining the above with \eqref{asymp2}, \eqref{asymp6} and \eqref{asymp7}.
We choose $X=T^2$, then the error terms above become $T^{1-2(1-\varepsilon) \alpha} (\log T)^{k^2+1}$, and the conclusion follows after a relabeling of the $\varepsilon$.


\section{Proof of Theorem \ref{mobius}}
\label{section_mobius}

\subsection{Assuming RH: $k\geq1$}

Following the arguments in \cite[Chapter 17]{D}, it is standard from Perron's formula that 
\begin{equation}\label{mobiusstart}
	\sum_{n \leq x} \mu_k(n)=\frac{1}{2\pi i}\int_{c-i[x]}^{c+i[x]}\frac{x^s}{\zeta(s)^k}\frac{ds}{s}+O\big((\log x)^k\big)
\end{equation}
with $c=1+\frac{1}{\log x}$. We deform the contour by replacing the line segment $c+it$, $|t|\leq [x]$, with a piecewise linear path comprising of a number
of horizontal and vertical line segments, $$\bigcup_{j=1}^{J} \big(V_j\cup H_j\big) \bigcup V_0,$$ 
where
\begin{align*}
	&V_0:\quad s=\frac{1}{2}+\frac{(\log\log x_0)^{\frac{8}{k}+\varepsilon}}{(\log x_0)^{\frac1k}}+it,\quad |t|\leq x_0,\\
	&V_j:\quad s=\frac{1}{2}+\frac{(\log \log (2^{j-1}x_0))^{\frac{8}{k}+\varepsilon}}{(\log (2^{j-1}x_0))^{\frac1k}}+it,\quad 2^{j-1}x_0\leq |t|\leq \min\{2^jx_0,[x]\},\quad 1\leq j\leq J,\\
	&H_j:\quad s=\sigma\pm i2^jx_0,\quad \frac12+\frac{(\log\log (2^{j}x_0))^{\frac{8}{k}+\varepsilon}}{(\log (2^{j}x_0))^{\frac1k}}\leq \sigma\leq \frac{1}{2}+\frac{(\log \log (2^{j-1}x_0))^{\frac{8}{k}+\varepsilon}}{(\log (2^{j-1}x_0))^{\frac1k}},\\
	&\qquad\qquad\qquad\qquad\qquad\qquad\qquad\qquad\qquad\qquad\qquad\qquad\qquad\qquad\qquad\qquad 1\leq j\leq J-1,\\
	& H_J:\quad s=\sigma\pm i[x],\quad \frac12+\frac{(\log\log (2^{J-1}x_0))^{\frac{8}{k}+\varepsilon}}{(\log (2^{J-1}x_0))^{\frac1k}}\leq \sigma\leq 1+\frac{1}{\log x}
\end{align*}
and $x_0=\exp\big((\log x)^{\frac{k}{k+1}}(\log\log x)^{\frac{k+8}{k+1}}\big)$, $J=\lceil \log_2\frac{[x]}{x_0} \rceil\ll\log x$. We encounter no pole in doing so.

We first consider the integral along $V_0$. Let $$\alpha_0:=\frac{(\log\log x_0)^{\frac{8}{k}+\varepsilon}}{(\log x_0)^{\frac1k}}\qquad\text{and}\qquad T_0:=\exp\Big(\frac{\log x_0}{(\log\log x_0)^{8}}\Big).$$ For $T_0\leq T\leq x_0$ 
we have $\frac{1}{(\log T)^{\frac1k}}\ll\alpha_0\ll \frac{(\log\log T)^{\frac{8}{k}+\varepsilon}}{(\log T)^{\frac1k}}$ 
and so
\begin{align*}
\frac{1}{2\pi i}\int_{1/2+\alpha_0+iT/2}^{1/2+\alpha_0+iT}\frac{x^s}{\zeta(s)^k}\frac{ds}{s}&\ll \frac{x^{1/2+\alpha_0}}{T}\int_{1/2+\alpha_0+iT/2}^{1/2+\alpha_0+iT}\Big|\frac{ds}{\zeta(s)^k}\Big|\\
&\ll x^{1/2+\alpha_0}\exp\Big(\log T(\log\log T)^{-1+\varepsilon}\Big),
\end{align*}
by \eqref{2}. We hence get
\begin{align}\label{0thestimate}
\frac{1}{2\pi i}\int_{1/2+\alpha_0+iT_0}^{1/2+\alpha_0+ix_0}\frac{x^s}{\zeta(s)^k}\frac{ds}{s}&\ll x^{1/2+\alpha_0}\exp\Big(\log x_0(\log\log x_0)^{-1+\varepsilon}\Big)\nonumber\\
&=\sqrt{x}\exp\Big(\frac{\log x(\log\log x_0)^{\frac{8}{k}+\varepsilon}}{(\log x_0)^{\frac1k}}+\log x_0(\log\log x_0)^{-1+\varepsilon}\Big)\nonumber\\
&\ll \sqrt{x}\exp\Big((\log x)^{\frac{k}{k+1}}(\log\log x)^{\frac{7}{k+1}+\varepsilon}\Big),
\end{align}
by diving the segment of integration  into dyadic intervals. The same bound holds for the integral along $\int_{1/2+\alpha_0-ix_0}^{1/2+\alpha_0-iT_0}$. Furthermore, we note from  Lemma \ref{CClemma} that
\begin{equation}\label{thepointwisebd}
	|\zeta(s)|^{-1} \ll (|t|+2)^{(\frac{1}{2k}+\varepsilon)\frac{\log\log x_0}{\log\log(|t|+2)}}
\end{equation}
for $s\in V_0$. So 
\begin{align}\label{1stestimate}
	\frac{1}{2\pi i}\int_{1/2+\alpha_0-iT_0}^{1/2+\alpha_0+iT_0}\frac{x^s}{\zeta(s)^k}\frac{ds}{s}&\ll \sqrt{x}\exp\Big(\frac{\log x(\log\log x_0)^{\frac{8}{k}+\varepsilon}}{(\log x_0)^{\frac1k}}+\frac{\log\log x_0\log T_0}{\log\log T_0}\Big)
\nonumber\\
	&\ll \sqrt{x}\exp\Big((\log x)^{\frac{k}{k+1}}(\log\log x)^{\frac{7}{k+1}+\varepsilon}\Big).
\end{align}
Combining \eqref{0thestimate} and \eqref{1stestimate} we obtain that
\begin{equation*}
\frac{1}{2\pi i}\int_{V_0}\frac{x^s}{\zeta(s)^k}\frac{ds}{s}\ll \sqrt{x}\exp\Big((\log x)^{\frac{k}{k+1}}(\log\log x)^{\frac{7}{k+1}+\varepsilon}\Big).
\end{equation*}

\kommentar{\hcom{I think we will need a bound like
\[
\frac{1}{T} \int_T^{2T} \powerzeta \, dt  \ll\exp\Big(\log T(\log\log T)^{-4+\varepsilon}\Big)
\]
uniformly for $\alpha\ll \frac{(\log\log T)^{8+\varepsilon}}{\log T}$. Assume that we have that bound, let's consider the integral along $V_0$. 

Denote by $\alpha_0=\frac{(\log\log x_0)^{8+\varepsilon}}{\log x_0}$. For $X\leq x_0$ we have $\alpha_0\ll \frac{(\log\log X)^{8+\varepsilon}}{\log X}$, and hence
\begin{align*}
\frac{1}{2\pi i}\int_{1/2+\alpha_0+iX/2}^{1/2+\alpha_0+iX}\frac{x^s}{\zeta(s)}\frac{ds}{s}&\ll \frac{x^{1/2+\alpha_0}}{X}\int_{1/2+\alpha_0+iX/2}^{1/2+\alpha_0+iX}\Big|\frac{ds}{\zeta(s)}\Big|\\
&\ll x^{1/2+\alpha_0}\exp\Big(\log X(\log\log X)^{-4+\varepsilon}\Big)\\
&\ll x^{1/2+\alpha_0}\exp\Big(\log x_0(\log\log x_0)^{-4+\varepsilon}\Big).
\end{align*}
Hence, by diving the vertical line $V_0$ into dyadic interval, we obtain
\begin{align*}
\frac{1}{2\pi i}\int_{V_0}\frac{x^s}{\zeta(s)}\frac{ds}{s}&\ll x^{1/2+\alpha_0}\exp\Big(\log x_0(\log\log x_0)^{-4+\varepsilon}\Big)\\
&=\sqrt{x}\exp\Big(\frac{\log x(\log\log x_0)^{8+\varepsilon}}{\log x_0}+\log x_0(\log\log x_0)^{-4+\varepsilon}\Big).
\end{align*}
Choosing $x_0=\exp(\sqrt{\log x}(\log\log x)^6)$ we get
\[
\frac{1}{2\pi i}\int_{V_0}\frac{x^s}{\zeta(s)}\frac{ds}{s}\ll \sqrt{x}\exp\Big(\sqrt{\log x}(\log\log x)^{2+\varepsilon}\Big).
\]
}}

For the contribution from the vertical segments $\cup_{j=1}^{J} V_j$, we deduce from \eqref{1} that it is bounded by
\begin{align}\label{estimateVj}
	&\ll \sqrt{x}(\log x)^{\frac {k^2}{4}+\varepsilon}\sum_{j=0}^{J-1}\exp\Big(\frac{\log x(\log\log (2^{j}x_0))^{\frac{8}{k}+\varepsilon}}{(\log (2^{j}x_0))^{\frac 1k}}\Big)\nonumber\\
	&\ll \sqrt{x}\exp\Big(\frac{\log x(\log\log x)^{\frac{8}{k}+\varepsilon}}{(\log x_0)^{\frac 1k}}\Big)\ll\sqrt{x}\exp\Big((\log x)^{\frac{k}{k+1}}(\log\log x)^{\frac{7}{k+1}+\varepsilon}\Big).
\end{align}

For $s\in H_j$, $1\leq j\leq J-1$, again like \eqref{thepointwisebd} we have
\[
|\zeta(s)|^{-1} \ll (2^jx_0)^{\frac{1}{2k}+\varepsilon}.
\]
So the contribution to \eqref{mobiusstart} from the horizontal segments $\cup_{j=1}^{J-1} H_j$ is
\begin{align}\label{estimateHj}
	&\ll \sqrt{x}\sum_{j=0}^{J-2}(2^jx_0)^{-1/2+\varepsilon}\exp\Big(\frac{\log x(\log\log (2^{j}x_0))^{\frac{8}{k}+\varepsilon}}{(\log (2^{j}x_0))^{\frac 1k}}\Big)\nonumber\\
&\ll \sqrt{x}x_0^{-1/2+\varepsilon}\exp\Big(\frac{\log x(\log\log x)^{\frac{8}{k}+\varepsilon}}{(\log x_0)^{\frac 1k}}\Big)\ll \sqrt{x}.
\end{align}

We are left with the integral along $H_J$, which is bounded by
\begin{equation}\label{HJ1}
\max_{\frac12+\frac{(\log\log x)^{\frac{8}{k}+\varepsilon}}{(\log x)^{\frac 1k}}\leq\sigma\leq1+\frac{1}{\log x}}\exp\Big((\sigma-1)\log x-k\log|\zeta(\sigma\pm ix)|\Big).
\end{equation}

For $\frac12+\frac{(\log\log x)^{\frac{8}{k}+\varepsilon}}{(\log x)^{\frac 1k}}\leq\sigma\leq \frac12+o(\frac{1}{\log\log x})$, Lemma \ref{CClemma} implies that this is
\begin{align}\label{est1}
	&\ll\exp\bigg((\sigma-1)\log x+\frac{k\log x}{2\log\log x}\log\frac{1}{1-(\log x)^{1-2\sigma}}\nonumber\\
&\qquad\qquad\qquad\qquad\qquad+O_k\Big(\frac{\log x}{\log\log x}\Big)+O_k\Big(\frac{\log x}{(\log\log x)^2}\log\frac{1}{1-(\log x)^{1-2\sigma}}\Big)\bigg)\nonumber\\
	&\ll  \exp\bigg((\sigma-1)\log x+\frac{k\log x}{2\log\log x}\log\frac{1}{(2\sigma-1)\log\log x}+O_k\big((2\sigma-1)\log x\big)\nonumber\\
&\qquad\qquad\qquad\qquad\qquad+O_k\Big(\frac{\log x}{\log\log x}\Big)+O_k\Big(\frac{\log x}{(\log\log x)^2}\log\frac{1}{2\sigma-1}\Big)\bigg),
\end{align}
which is decreasing with repsect to $\sigma$, and, hence, \begin{equation}\label{est2}\ll\exp\bigg(-\frac{(8+k)\log x\log\log\log x}{2\log\log x}+O_k\Big(\frac{\log x}{\log\log x}\Big)\bigg)\ll 1.\end{equation}

For $1-\frac{1}{\log\log x}\leq \sigma\leq 1+\frac{1}{\log x}$, the second estimate in  Lemma \ref{CClemma} leads to a bound of size
\begin{align}\label{est3}
	&\ll\exp\Big((\sigma-1)\log x+k\log\log\log x+O_k(1)\Big)\ll (\log\log x)^k.
\end{align}

Finally, for $\frac12+O(\frac{1}{\log\log x})\leq \sigma\leq 1-\frac{1}{\log\log x}$, we use the last estimate in  Lemma \ref{CClemma} to get the bound
\begin{align}\label{est4}
	&\ll\exp\bigg((\sigma-1)\log x+\frac{(\log x)^{2-2\sigma}}{(1-\sigma)\log\log x}+\varepsilon\log x+O_k\Big(\frac{(\log x)^{2-2\sigma}}{(1-\sigma)^2(\log\log x)^2}\Big)\bigg)\ll x^\varepsilon.
\end{align}
Combining the estimates we obtain the first part of the theorem for $k\geq1$.

\subsection{Assuming RH: $k<1$}

The arguments are similar to the previous subsection. 
We replace the contour in \eqref{mobiusstart} with $$\bigcup_{j=1}^{J} \big(V_j\cup H_j\big) \bigcup V_0,$$ 
where
\begin{align*}
	&V_0:\quad s=\frac{1}{2}+\varepsilon\frac{\log\log x_1}{\log x_1}+it,\quad |t|\leq x_1,\\
	&V_j:\quad s=\frac{1}{2}+\varepsilon\frac{\log \log (2^{j-1}x_1)}{\log (2^{j-1}x_1)}+it,\quad 2^{j-1}x_1\leq |t|\leq \min\{2^jx_1,[x]\},\quad 1\leq j\leq J,\\
	&H_j:\quad s=\sigma\pm i2^jx_1,\quad \frac12+\varepsilon\frac{\log\log (2^{j}x_1)}{\log (2^{j}x_1)}\leq \sigma\leq \frac{1}{2}+\varepsilon\frac{\log \log (2^{j-1}x_1)}{\log (2^{j-1}x_1)},\quad 1\leq j\leq J-1,\\
	& H_J:\quad s=\sigma\pm i[x],\quad \frac12+\varepsilon\frac{\log\log (2^{J-1}x_1)}{\log (2^{J-1}x_1)}\leq \sigma\leq 1+\frac{1}{\log x}
\end{align*}
and $x_1=\exp(\sqrt{\varepsilon\log x}\log\log x)$, $J=\lceil \log_2\frac{[x]}{x_1} \rceil\ll\log x$. Again we encounter no pole in doing so.

For the integral along $V_0$, let $$\alpha_1:=\varepsilon\frac{\log\log x_1}{\log x_1}\qquad\text{and}\qquad T_1:=\exp\Big(\frac{\log x_1}{\log\log x_1}\Big).$$ If $T_1\leq T\leq x_1$, then 
$\frac{1}{\log T}\ll\alpha_1\ll \frac{\log\log T}{\log T}$, 
and hence
\begin{align*}
\frac{1}{2\pi i}\int_{1/2+\alpha_1+iT/2}^{1/2+\alpha_1+iT}\frac{x^s}{\zeta(s)^k}\frac{ds}{s}&\ll \frac{x^{1/2+\alpha_1}}{T}\int_{1/2+\alpha_1+iT/2}^{1/2+\alpha_1+iT}\Big|\frac{ds}{\zeta(s)^k}\Big|\ll x^{1/2+\alpha_1}\exp\Big((\log\log T)^{1+\varepsilon}\Big),
\end{align*}
by \eqref{5'}. It follows that
\begin{align}\label{0thestimate'}
\frac{1}{2\pi i}\int_{1/2+\alpha_1+iT_1}^{1/2+\alpha_1+ix_1}\frac{x^s}{\zeta(s)^k}\frac{ds}{s}&\ll x^{1/2+\alpha_1}\exp\Big((\log\log x_1)^{1+\varepsilon}\Big)\nonumber\\
&=\sqrt{x}\exp\Big(\varepsilon\frac{\log x\log\log x_1}{\log x_1}+(\log\log x_1)^{1+\varepsilon}\Big)\nonumber\\
&\ll \sqrt{x}\exp\big(\varepsilon\sqrt{\log x}\big),
\end{align}
by relabelling $\varepsilon$.
The same bound holds for the integral along $\int_{1/2+\alpha_1-ix_1}^{1/2+\alpha_1-iT_1}$. Also, by \eqref{thepointwisebd} we have
\begin{align}\label{1stestimate'}
	\frac{1}{2\pi i}\int_{1/2+\alpha_1-iT_1}^{1/2+\alpha_1+iT_1}\frac{x^s}{\zeta(s)^k}\frac{ds}{s}&\ll \sqrt{x}\exp\Big(\varepsilon\frac{\log x\log\log x_1}{\log x_1}+\Big(\frac k2+\varepsilon\Big)\frac{\log\log x_1\log T_1}{\log\log T_1}\Big)
\nonumber\\
&\ll \sqrt{x}\exp\big(\varepsilon\sqrt{\log x}\big),
\end{align}
by another relabelling of $\varepsilon$. Combining \eqref{0thestimate'} and \eqref{1stestimate'} we obtain that
\begin{equation}\label{V0'}
\frac{1}{2\pi i}\int_{V_0}\frac{x^s}{\zeta(s)^k}\frac{ds}{s}\ll \sqrt{x}\exp\big(\varepsilon\sqrt{\log x}\big).
\end{equation}

The same bound holds for the other integrals. Indeed, similar to \eqref{estimateVj} and \eqref{estimateHj} we have
\begin{align}\label{Vj'}
\sum_{j=1}^J\frac{1}{2\pi i}\int_{V_j}\frac{x^s}{\zeta(s)^k}\frac{ds}{s}	&\ll \sqrt{x}(\log x)^{\frac {k^2}{4}+\varepsilon}\sum_{j=0}^{J-1}\exp\Big(\varepsilon\frac{\log x\log\log (2^{j}x_1)}{\log (2^{j}x_1)}\Big)\nonumber\\
	&\ll \sqrt{x}(\log x)^{\frac {k^2}{4}+1+\varepsilon}\exp\Big(\varepsilon\frac{\log x\log\log x_1}{\log x_1}\Big)\ll\sqrt{x}\exp\big(\varepsilon\sqrt{\log x}\big)
\end{align}
and
\begin{align}\label{Hj'}
\sum_{j=1}^{J-1}\frac{1}{2\pi i}\int_{H_j}\frac{x^s}{\zeta(s)^k}\frac{ds}{s}	&\ll \sqrt{x}\sum_{j=0}^{J-2}(2^jx_1)^{\frac k2-1+\varepsilon}\exp\Big(\varepsilon\frac{\log x\log\log (2^{j}x_1)}{\log (2^{j}x_1)}\Big)\nonumber\\
&\ll \sqrt{x}x_1^{-1/2+\varepsilon}(\log x)\exp\Big(\varepsilon\frac{\log x\log\log x_1}{\log x_1}\Big)\ll\sqrt{x}\exp\big(\varepsilon\sqrt{\log x}\big).
\end{align}
And similar to \eqref{HJ1}, \eqref{est1}, \eqref{est2}, \eqref{est3} and \eqref{est4} we get
\begin{align*}
&\frac{1}{2\pi i}\int_{H_J}\frac{x^s}{\zeta(s)^k}\frac{ds}{s}\ll \max_{\frac12+\varepsilon\frac{\log\log x}{\log x}\leq\sigma\leq1+\frac{1}{\log x}}\exp\Big((\sigma-1)\log x-k\log|\zeta(\sigma\pm ix)|\Big)\\
	&\ \ \ll  
\begin{cases}
\exp\Big(\frac{(k-1)\log x}{2}-\frac{k\log x\log\log\log x}{\log\log x}+O_k\big(\frac{\log x}{\log\log x}\big)\Big) & \text{if } \frac12+\varepsilon\frac{\log\log x}{\log x}\leq\sigma\leq \frac12+o(\frac{1}{\log\log x}),\\
(\log\log x)^k & \text{if } 1-\frac{1}{\log\log x}\leq \sigma\leq 1+\frac{1}{\log x},\\
x^\varepsilon & \text{if } \frac12+O(\frac{1}{\log\log x})\leq \sigma\leq 1-\frac{1}{\log\log x},
\end{cases}\\
&\ \ \ll x^\varepsilon,
\end{align*}
by Lemma  \ref{CClemma}. This, together with \eqref{V0'}, \eqref{Vj'} and \eqref{Hj'}, establishes the first part of the theorem for $k<1$.

\subsection{Assuming Conjecture \ref{gonek_conj}}

We replace the contour in \eqref{mobiusstart} by $H_1 \cup V_0$,
where
\begin{align*}
	&V_0:\quad s=\frac{1}{2}+\alpha_2+it,\quad |t|\leq [x],\\
&H_1:\quad s=\sigma\pm i[x],\quad \frac12+\alpha_2\leq \sigma\leq 1+\frac{1}{\log x}
\end{align*}for some $\frac{1}{\log x}\leq \alpha_2\leq 1$ to be chosen later.  
We encounter no pole in doing so.

On one hand, by Conjecture \ref{gonek_conj} we get
\begin{align}\label{new1stestimate}
	\frac{1}{2\pi i}\int_{V_0}\frac{x^s}{\zeta(s)^k}\frac{ds}{s}&\ll 
	\sqrt{x}x^{\alpha_2}\Big(\frac{1}{\alpha_2}\Big)^{\frac{k^2}{4}}.
\end{align}
On the other hand, we have
\[
\frac{1}{2\pi i}\int_{H_1}\frac{x^s}{\zeta(s)^k}\frac{ds}{s}\ll \max_{\frac12+\alpha_2\leq\sigma\leq1+\frac{1}{\log x}}\exp\Big((\sigma-1)\log x-k\log|\zeta(\sigma\pm i[x])|\Big).
\]
Like in \eqref{est1} and \eqref{est2}, this is
\begin{align}\label{newestH1}
&\ll  \exp\bigg((\sigma-1)\log x+\frac{k\log x}{2\log\log x}\log\frac{1}{(2\sigma-1)\log\log x}+O_k\big((2\sigma-1)\log x\big)\nonumber\\
&\qquad\qquad\qquad\qquad\qquad\qquad+O_k\Big(\frac{\log x}{\log\log x}\Big)+O_k\Big(\frac{\log x}{(\log\log x)^2}\log\frac{1}{2\sigma-1}\Big)\bigg)\nonumber\\
&\ll\exp\bigg(-\frac{\log x}{2}+\frac{k\log x\log\frac{1}{\alpha_2}}{2\log\log x}-\frac{k\log x\log\log\log x}{2\log\log x}\nonumber\\
&\qquad\qquad\qquad\qquad\qquad\qquad+O_k\big(\alpha_2\log x\big)+O_k\Big(\frac{\log x}{\log\log x}\Big)+O_k\Big(\frac{\log x\log\frac{1}{\alpha_2}}{(\log\log x)^2}\Big)\bigg)
\end{align}
for $\frac12+\alpha_2\leq\sigma\leq \frac12+o(\frac{1}{\log\log x})$,
  and as in \eqref{est3} and \eqref{est4}, it is
\[
\ll (\log\log x)^k+x^\varepsilon
\]
for $\frac12+O(\frac{1}{\log\log x})\leq \sigma\leq 1+\frac{1}{\log x}$. In view of \eqref{newestH1}, we choose
\[
\alpha_2=\begin{cases}
\frac{1}{\log x} & \text{if }k\leq2,\\
\frac{1}{(\log x)^{\frac{2}{k}}(\log\log x)^{1-\varepsilon}} & \text{if }k>2,
\end{cases}
\]
and then
\[
\frac{1}{2\pi i}\int_{H_1}\frac{x^s}{\zeta(s)^k}\frac{ds}{s}\ll\sqrt{x}.
\]
Combining with \eqref{new1stestimate} we obtain the second part of the theorem.
\bibliographystyle{amsalpha}

\end{document}